\newcommand\mtop{1in}
\newcommand\mbottom{1in}
\newcommand\mleft{1.2in}
\newcommand\mright{1.2in}
\providecommand{\mparwidth}{1.0in}
\providecommand{\mtop}{1in}
\providecommand{\mbottom}{1in}
\providecommand{\mleft}{1.2in}
\providecommand{\mright}{1.2in}
\setlist[enumerate,1]{leftmargin=6ex,topsep=-5pt}
\setlist[enumerate]{itemsep=7pt}
\setlist[itemize,1]{leftmargin=4ex,topsep=0em}
\setlist[itemize]{itemsep=5pt}
\setlist[itemize,2]{label=$\circ$}
\setlist[itemize,3]{label={\scalefont{0.6}\color{gray}$\blacktriangleright$}}
\setlist[itemize,4]{label=$\ast$}
\setlist{nolistsep}
\begin{document}

\newcommand{\W}{\mathbb{W}}

\newcommand{\theoremnumstyle}{section}
%% Actual formatting
\parskip=0.2in \parindent=0in
\allowdisplaybreaks % Allow align environments to split over page breaks
\raggedbottom % Don't leave awkward spaces in the middle of pages

% From Atanas Atanasov: \replacecommand is similar to \providecommand but it initializes the command as necessary even if a previous definition exists.
\newcommand*{\replacecommand}[1]{%
  \providecommand{#1}{}%
  \renewcommand{#1}%
}

\renewcommand{\arraystretch}{1.5}

%% Miscellaneous \newcommand's
\renewcommand{\l}{\overset}
\newcommand{\into}{\hookrightarrow}
\newcommand{\onto}{\twoheadrightarrow}
\newcommand{\tto}{\longrightarrow}
\newcommand{\too}[1]{\l{#1}\to}
\newcommand{\ttoo}[1]{\l{#1}\longrightarrow}
\newcommand{\intoo}[1]{\l{#1}\into}
\newcommand{\ontoo}[1]{\l{#1}\onto}
\newcommand{\mapstoo}[1]{\l{#1}\mapsto}
\newcommand{\bto}{\leftarrow}
\newcommand{\btto}{\longleftarrow}
\newcommand{\btoo}[1]{\l{#1}\bto}
\newcommand{\bttoo}[1]{\l{#1}\longleftarrow}
\newcommand{\binto}{\hookleftarrow}
\newcommand{\bonto}{\twoheadleftarrow}
\newcommand{\bintoo}[1]{\l{#1}\binto}
\newcommand{\bontoo}[1]{\l{#1}\bonto}
% stupid hack to get the open immersion symbol; see http://tex.stackexchange.com/questions/66723/nudging-overset-characters-downwards
\newcommand{\ointo}{\hspace{3pt}\text{\raisebox{-1.5pt}{$\overset{\circ}{\vphantom{}\smash{\text{\raisebox{1.5pt}{$\into$}}}}$}}\hspace{3pt}}
\newcommand{\lu}{\underset}
\newcommand{\bimplies}{\impliedby}
\newcommand{\ints}{\cap}
\newcommand{\intss}{\bigcap}
\newcommand{\union}{\cup}
\newcommand{\unions}{\bigcup}
\newcommand{\djunion}{\sqcup}
\newcommand{\djunions}{\bigsqcup}
\newcommand{\propersubset}{\subsetneq}
\newcommand{\propersupset}{\supsetneq}
\newcommand{\contains}{\supseteq}
\newcommand{\semidirect}{\rtimes}
\newcommand{\isom}{\cong}
\newcommand{\normal}{\triangleleft}
\replacecommand{\dsum}{\oplus}
\newcommand{\dsums}{\bigoplus}
\newcommand{\tensor}{\otimes}
\newcommand{\tensors}{\bigotimes}
\newcommand{\cotensor}{{\,\scriptstyle\square}}
\let\originalbar\bar
\renewcommand{\bar}[1]{{\overline{#1}}}
\newcommand{\rlim}{\mathop{\varinjlim}\limits}
\newcommand{\llim}{\mathop{\varprojlim}\limits}
\newcommand{\x}{\times}
\replacecommand{\st}{\hspace{2pt} : \hspace{2pt}} % used to be \providecommand
\newcommand{\vv}{\vspace{10pt}}
\newcommand{\til}{\widetilde}
\renewcommand{\hat}{\widehat}
\newcommand{\hhat}{\wedge}
\newcommand{\iy}{\infty}
\newcommand{\hteq}{\simeq}
\newcommand{\dd}[2]{\frac{\partial #1}{\partial #2}}
\newcommand{\sm}{\wedge} % stop getting confused about the word "wedge"
\newcommand{\noqed}{\renewcommand{\qedsymbol}{}}
\newcommand{\adjoint}{\dashv}
\newcommand{\wreath}{\wr}
\newcommand{\heart}{\heartsuit}

%http://tex.stackexchange.com/questions/23432/how-to-create-my-own-math-operator-with-limits
\newcommand{\bigast}{\mathop{\vphantom{\sum}\mathchoice%
  {\vcenter{\hbox{\huge *}}}
  {\vcenter{\hbox{\Large *}}}{*}{*}}\displaylimits}

%% \operatorname
\renewcommand{\dim}{\operatorname{dim}}
\newcommand{\diam}{\operatorname{diam}}
\newcommand{\coker}{\operatorname{coker}}
\newcommand{\im}{\operatorname{im}}
\newcommand{\disc}{\operatorname{disc}}
\newcommand{\Pic}{\operatorname{Pic}}
\newcommand{\Der}{\operatorname{Der}}
\newcommand{\ord}{\operatorname{ord}}
\newcommand{\nil}{\operatorname{nil}}
\newcommand{\rad}{\operatorname{rad}}
\newcommand{\ssum}{\operatorname{sum}}
\newcommand{\codim}{\operatorname{codim}}
\newcommand{\cchar}{\operatorname{char}}
\newcommand{\sspan}{\operatorname{span}}
\newcommand{\rank}{\operatorname{rank}}
\newcommand{\Aut}{\operatorname{Aut}}
\newcommand{\Out}{\operatorname{Out}}
\newcommand{\Div}{\operatorname{Div}}
\newcommand{\Gal}{\operatorname{Gal}}
\newcommand{\Hom}{\operatorname{Hom}}
\newcommand{\Mor}{\operatorname{Mor}}
\newcommand{\Vect}{\operatorname{Vect}}
\newcommand{\Fun}{\operatorname{Fun}}
\newcommand{\Iso}{\operatorname{Iso}}
\newcommand{\Map}{\operatorname{Map}}
\newcommand{\Ho}{\operatorname{Ho}}
\newcommand{\Mod}{\operatorname{Mod}}
\newcommand{\Tot}{\operatorname{Tot}}
\newcommand{\cofib}{\operatorname{cofib}}
\newcommand{\fib}{\operatorname{fib}}
\newcommand{\hocofib}{\operatorname{hocofib}}
\newcommand{\hofib}{\operatorname{hofib}}
\newcommand{\Maps}{\operatorname{Maps}}
\newcommand{\Sym}{\operatorname{Sym}}
\newcommand{\Diff}{\operatorname{Diff}}
\newcommand{\Tr}{\operatorname{Tr}}
\newcommand{\Frac}{\operatorname{Frac}}
\renewcommand{\Re}{\operatorname{Re}} % don't like the curly default ones
\renewcommand{\Im}{\operatorname{Im}}
\newcommand{\gr}{\operatorname{gr}}
\newcommand{\tr}{\operatorname{tr}}
\newcommand{\End}{\operatorname{End}}
\newcommand{\Mat}{\operatorname{Mat}}
\newcommand{\Proj}{\operatorname{Proj}}
\newcommand{\Th}{\operatorname{Thom}}
\newcommand{\Thom}{\operatorname{Thom}}
\newcommand{\Spec}{\operatorname{Spec}}
\newcommand{\Ext}{\operatorname{Ext}}
\newcommand{\Cotor}{\operatorname{Cotor}}
\newcommand{\Tor}{\operatorname{Tor}}
\newcommand{\vol}{\operatorname{vol}}

%% new operatornames since preamble8
\newcommand{\Set}{\operatorname{Set}}
\newcommand{\Top}{\operatorname{Top}}
\newcommand{\Fin}{\operatorname{Fin}}
\newcommand{\Spaces}{\operatorname{Spaces}}
\newcommand{\Sp}{\operatorname{Sp}}
\newcommand{\Spectra}{\operatorname{Spectra}}
\newcommand{\Spt}{\operatorname{Spt}}
\newcommand{\Comod}{\operatorname{Comod}}
\newcommand{\Spf}{\operatorname{Spf}}
\newcommand{\tmf}{\mathit{tmf}}
\newcommand{\Tmf}{\mathit{Tmf}}
\newcommand{\TMF}{\mathit{TMF}}
\newcommand{\Null}{\operatorname{Null}}
\newcommand{\Fil}{\operatorname{Fil}}
\newcommand{\Sq}{\operatorname{Sq}}
\newcommand{\Stable}{\operatorname{Stable}}
\newcommand{\Poly}{\operatorname{Poly}}
\newcommand{\Cat}{\operatorname{Cat}}
\newcommand{\Orb}{\operatorname{Orb}}
\newcommand{\Exc}{\operatorname{Exc}}
\newcommand{\Part}{\operatorname{Part}}
\newcommand{\Comm}{\operatorname{Comm}}
\newcommand{\Res}{\operatorname{Res}}
\newcommand{\Thick}{\operatorname{Thick}}
\newcommand{\red}{{\operatorname{red}}}

\newcommand{\mmf}{\mathit{mmf}}
\newcommand{\Sm}{\operatorname{Sm}}
\newcommand{\Var}{\operatorname{Var}}
\newcommand{\Frob}{\operatorname{Frob}}
\newcommand{\Rep}{\operatorname{Rep}}
\newcommand{\Ch}{\operatorname{Ch}}
\newcommand{\Shv}{\operatorname{Shv}}
\newcommand{\Corr}{\operatorname{Corr}}
\newcommand{\Span}{\operatorname{span}}
\newcommand{\Sch}{\operatorname{Sch}}
\newcommand{\ev}{\operatorname{ev}}
\newcommand{\Homog}{\operatorname{Homog}}
\newcommand{\conn}{\operatorname{conn}}
\newcommand{\type}{\operatorname{type}}
\newcommand{\num}{\operatorname{num}}
\newcommand{\Aff}{\operatorname{Aff}}
\newcommand{\Psh}{\operatorname{Psh}}
\newcommand{\sk}{\operatorname{sk}}
\newcommand{\cosk}{\operatorname{cosk}}
\newcommand{\Cart}{\operatorname{Cart}}

\newcommand{\Br}{\operatorname{Br}}
\newcommand{\BW}{\operatorname{BW}}
\newcommand{\Cl}{\operatorname{Cl}}
\newcommand{\Conf}{\operatorname{Conf}}
\newcommand{\Alg}{\operatorname{Alg}}
\newcommand{\CAlg}{\operatorname{CAlg}}
\newcommand{\Lie}{\operatorname{Lie}}
\newcommand{\Coalg}{\operatorname{Coalg}}
\newcommand{\Ab}{\operatorname{Ab}}
\newcommand{\Ind}{\operatorname{Ind}}
\newcommand{\ind}{\operatorname{ind}}
\newcommand{\Fix}{\operatorname{Fix}}
\newcommand{\ho}{\operatorname{ho}}
\newcommand{\coeq}{\operatorname{coeq}}
\newcommand{\CMon}{\operatorname{CMon}}
\newcommand{\Sing}{\operatorname{Sing}}
\newcommand{\Inj}{\operatorname{Inj}}
\newcommand{\StMod}{\operatorname{StMod}}
\newcommand{\Loc}{\operatorname{Loc}}
\newcommand{\Free}{\operatorname{Free}}
\newcommand{\Art}{\operatorname{Art}}
\newcommand{\Gpd}{\operatorname{Gpd}}
\newcommand{\Def}{\operatorname{Def}}
\newcommand{\Hyp}{\operatorname{Hyp}}
\newcommand{\Pre}{\operatorname{Pre}}
\newcommand{\Lat}{\operatorname{Lat}}
\newcommand{\Coords}{\operatorname{Coords}}
\newcommand{\cone}{\operatorname{cone}}
\newcommand{\Spc}{\operatorname{Spc}}
\newcommand{\QCoh}{\operatorname{QCoh}}
\newcommand{\height}{\operatorname{ht}}
\newcommand{\Sub}{\operatorname{Sub}}
\newcommand{\Cone}{\operatorname{Cone}}
\newcommand{\Cocone}{\operatorname{Cocone}}
\newcommand{\Ran}{\operatorname{Ran}}
\newcommand{\Lan}{\operatorname{Lan}}
\newcommand{\LieAlg}{\operatorname{LieAlg}}
\newcommand{\Com}{\operatorname{Com}}
\newcommand{\CoAlg}{\operatorname{CoAlg}}
\newcommand{\Prim}{\operatorname{Prim}}
\newcommand{\Coh}{\operatorname{Coh}}
\newcommand{\FormalGrp}{\operatorname{FormalGrp}}
\newcommand{\Fact}{\operatorname{Fact}}
\renewcommand{\Bar}{\operatorname{Bar}}
\newcommand{\Cobar}{\operatorname{Cobar}}
\newcommand{\Ad}{\operatorname{Ad}}
\newcommand{\Moduli}{\operatorname{Moduli}}
\newcommand{\dgla}{\operatorname{dgla}}
\newcommand{\obl}{\operatorname{obl}}
\newcommand{\ob}{\operatorname{ob}}
\newcommand{\IndCoh}{\operatorname{IndCoh}}
\newcommand{\Cocomm}{\operatorname{Cocomm}}
\newcommand{\PreStk}{\operatorname{PreStk}}
\newcommand{\FormGrp}{\operatorname{FormGrp}}
\newcommand{\FormMod}{\operatorname{FormMod}}
\newcommand{\Grp}{\operatorname{Grp}}
\newcommand{\CommAlg}{\operatorname{CommAlg}}
\newcommand{\CoComm}{\operatorname{CoComm}}
\newcommand{\IndSch}{\operatorname{IndSch}}
\newcommand{\Dist}{\operatorname{Dist}}
\newcommand{\Triv}{\operatorname{Triv}}
\newcommand{\Oper}{\operatorname{Oper}}
\newcommand{\Bij}{\operatorname{Bij}}
\newcommand{\Syl}{\operatorname{Syl}}
\newcommand{\Inn}{\operatorname{Inn}}
\newcommand{\Emb}{\operatorname{Emb}}
\newcommand{\Gr}{\operatorname{Gr}}
\newcommand{\CRing}{\operatorname{CRing}}
\newcommand{\sSet}{\operatorname{sSet}}
\newcommand{\et}{\text{\'et}}
\newcommand{\Sh}{\operatorname{Sh}}
\newcommand{\Nil}{\operatorname{Nil}}
\newcommand{\Cech}{\v Cech}
\newcommand{\Stacks}{\operatorname{Stacks}}
\newcommand{\Pin}{\operatorname{Pin}}
\newcommand{\sgn}{\operatorname{sgn}}
%END

%% \A etc.
\newcommand{\A}{\mathbb{A}}
\replacecommand{\C}{\mathbb{C}}
\newcommand{\CP}{\mathbb{C}\mathrm{P}}
\newcommand{\E}{\mathbb{E}}
\newcommand{\F}{\mathbb{F}}
\replacecommand{\G}{\mathbb{G}}
\renewcommand{\H}{\mathbb{H}}
\newcommand{\K}{\mathbb{K}}
\newcommand{\M}{\mathbb{M}}
\newcommand{\N}{\mathbb{N}}
\renewcommand{\P}{\mathbb{P}}
\newcommand{\Q}{\mathbb{Q}}
\newcommand{\R}{\mathbb{R}}
\newcommand{\RP}{\mathbb{R}\mathrm{P}}
\newcommand{\V}{\vee}
\newcommand{\T}{\mathbb{T}}
\providecommand{\U}{\mathscr{U}}
\newcommand{\Z}{\mathbb{Z}}
\newcommand{\e}{\mathfrak{g}}
\newcommand{\m}{\mathfrak{m}}
\newcommand{\n}{\mathfrak{n}}
\newcommand{\p}{\mathfrak{p}}
\newcommand{\q}{\mathfrak{q}}
\renewcommand{\t}{\mathfrak{t}}

%% Large parentheses, etc.
\newcommand{\pa}[1]{\left( {#1} \right)}
\newcommand{\br}[1]{\left[ {#1} \right]}
\newcommand{\cu}[1]{\left\{ {#1} \right\}}
\newcommand{\ab}[1]{\left| {#1} \right|}
\newcommand{\an}[1]{\left\langle {#1}\right\rangle}
\newcommand{\fl}[1]{\left\lfloor {#1}\right\rfloor}
\newcommand{\ceil}[1]{\left\lceil {#1}\right\rceil}
\newcommand{\tf}[1]{{\textstyle{#1}}}
\newcommand{\patf}[1]{\pa{\textstyle{#1}}}

%% Weird constructions
\renewcommand{\mp}{\ \raisebox{5pt}{\text{\rotatebox{180}{$\pm$}}}\ }
\renewcommand{\d}[1]{\ss \mathrm{d}#1}
\newcommand{\imod}{\hspace{-7pt}\pmod}
%\renewcommand{\check}[1]{\overset{\smile}{#1}}

%% Better versions of existing commands
\renewcommand{\epsilon}{\varepsilon}
\renewcommand{\phi}{{\mathchoice{\raisebox{2pt}{\ensuremath\varphi}}{\raisebox{2pt}{\!\! \ensuremath\varphi}}{\raisebox{1pt}{\scriptsize$\varphi$}}{\varphi}}}
\newcommand{\ph}{{\color{white}.\!}}
\newcommand{\tspacer}{{\ensuremath{\color{white}\Big|\!}}}
\newcommand{\chii}{\raisebox{2pt}{\ensuremath\chi}}

% from http://mbork.pl/2009-04-27_Fun_with_quantifiers_%28en%29
\let\originalchi=\chi
\renewcommand{\chi}{{\!{\mathchoice{\raisebox{2pt}{
$\originalchi$}}{\!\raisebox{2pt}{
$\originalchi$}}{\raisebox{1pt}{\scriptsize$\originalchi$}}{\originalchi}}}}

\let\originalforall=\forall
\renewcommand{\forall}{\ \originalforall}

\let\originalexists=\exists
\renewcommand{\exists}{\ \originalexists}

\let\realcheck\check
\newcommand{\vH}{\realcheck{H}}

% quote block: \begin{qu}{leftmargin}{rightmargin}{  ...  }
\newenvironment{qu}[2]
{\begin{list}{}
	  {\setlength\leftmargin{#1}
	  \setlength\rightmargin{#2}}
	  \item[]\footnotesize}
		  {\end{list}}

\newenvironment{titleblock}
{\begin{mdframed}[linecolor=black!20,backgroundcolor=black!15]\begin{center}}
{\end{center}\end{mdframed}}

\newenvironment{shadedblock}[1][5in]
{\bigskip\begin{mdframed}[align=center,userdefinedwidth=#1,linecolor=white,backgroundcolor=black!5]}{\end{mdframed}}

\newenvironment{shadedtitleblock}[2][5in]
{\begin{mdframed}[align=center,userdefinedwidth=#1,linecolor=white,backgroundcolor=black!15]\sc #2\end{mdframed}\begin{mdframed}[align=center,userdefinedwidth=#1,linecolor=white,backgroundcolor=black!5]}{\end{mdframed}}

\newcommand{\shadedheader}[1]{\vspace{25pt}\begin{mdframed}[linecolor=black!20,backgroundcolor=black!5]\sc #1\end{mdframed}\vspace{10pt}}

%%%%%%%%%%%%%%%%%%%%%%%%%
%% newcommands that require certain packages

\newcommand{\itext}{\shortintertext} % requires mathtools
\makeatletter
\@ifundefined{resetu}{ % revert to \u as text accent by defining \resetu
	\renewcommand{\u}{\underbracket[0.7pt]} % requires mathtools
}
\makeatother
\makeatletter
\@ifundefined{resetO}{ % revert to \O as text accent by defining \resetO
	\renewcommand{\O}{\mathcal{O}}
}
\makeatother
\makeatletter
\@ifundefined{resetk}{ % revert to \O as text accent by defining \resetO
	\renewcommand{\k}{\Bbbk}
}
\makeatother

% Colors
% Now this preamble doesn't cause errors when there is no \usepackage{color}
% (you just can't use these commands)
\makeatletter
\@ifundefined{mathds}{
	\newcommand{\Id}{Id}
	}{
	\newcommand{\Id}{\mathds{1}} % requires mathds
	}
\@ifundefined{sethlcolor}{
	\newcommand{\fixmehl}[2]{\underline{#1}\marginpar{\raggedright\smaller\smaller #2}}
	}{\@ifundefined{marginnote}{
		\newcommand{\highlight}[1]{\ifmmode{\text{\sethlcolor{llgray}\hl{$#1$}}}\else{\sethlcolor{llred}\hl{#1}}\fi}
		\newcommand{\fixmehl}[2]{\highlight{#1}\marginpar{\raggedright\smaller\smaller\color{maroon} #2}}
		}{
		\newcommand{\highlight}[1]{\ifmmode{\text{\sethlcolor{llgray}\hl{$#1$}}}\else{\sethlcolor{llred}\hl{#1}}\fi}
		\newcommand{\fixmehl}[2]{\marginnote{\smaller \smaller\color{maroon} #2}{\highlight{#1}}}
		}
	}
\@ifundefined{xy}{}{
	\SelectTips{cm}{10}  % Better arrowheads
}
\@ifundefined{color}{}{
	\definecolor{darkgreen}{RGB}{0,70,0}
	\definecolor{dgreen}{RGB}{0,100,0}
	\definecolor{purple}{RGB}{120,00,120}
	\definecolor{gray}{RGB}{100,100,100}
	\definecolor{mgreen}{RGB}{0,150,0}
	\definecolor{dgreen}{RGB}{0,100,0}
	\definecolor{llgray}{RGB}{230,230,230}
	\definecolor{llred}{RGB}{237,228,228}
	\definecolor{lgreen}{RGB}{100,200,100}
	\definecolor{mgray}{RGB}{150,150,150}
	\definecolor{lgray}{RGB}{190,190,190}
	\definecolor{maroon}{RGB}{150,0,0}
	\definecolor{lblue}{RGB}{120,170,200}
	\definecolor{mblue}{RGB}{65,105,225}
	\definecolor{dblue}{RGB}{0,56,111}
	\definecolor{orange}{RGB}{255,165,0}
	\definecolor{brown}{RGB}{177,84,15}
	\definecolor{rose}{RGB}{135,0,52}
	\definecolor{gold}{RGB}{177,146,87}
	\definecolor{dred}{RGB}{135,19,19}
	\definecolor{mred}{RGB}{194,28,28}
	\newcommand{\edit}[1]{{\it{\color{gray}#1}}}
	\newcommand{\fixme}[1]{{\color{maroon}\it{#1}}}
	\newcommand{\citeme}[2][\!\!]{{\color{orange}[#2~\textit{#1}]}}
	\newcommand{\later}[1]{{\color{dgreen}#1}}
	\newcommand{\corr}[1]{{\color{red}\itshape #1}}
	\newcommand{\question}[1]{\itshape{\color{blue}#1}\upshape}
}
\@ifundefined{substack}{}{
    \newcommand{\attop}[1]{{\let\textstyle\scriptstyle\let\scriptstyle\scriptscriptstyle\substack{#1}}}
    \renewcommand{\atop}[1]{{\let\scriptstyle\textstyle\let\scriptscriptstyle\scriptstyle\substack{#1}}}
}
\makeatother

\newcommand{\tabentry}[1]{\renewcommand{\arraystretch}{1}\begin{tabular}{c}#1\end{tabular}}

\newcommand{\margin}[1]{\marginpar{\raggedright \scalefont{0.7}#1}} % requires scalefnt

% require package pigpen and xy
\newcommand{\pullback}{\ar@{}[rd]|<<{\text{\pigpenfont A}}}
\newcommand{\pushout}{\ar@{}[rd]|>>{\text{\pigpenfont I}}}

% requires xy
% Somehow these don't work when placed earlier in the preamble. Maybe something
% about the \makeatletter thing
\newcommand{\longleftrightarrows}{\xymatrix@1@C=16pt{
\ar@<0.4ex>[r] & \ar@<0.4ex>[l]
}}
\newcommand{\longrightrightarrows}{\xymatrix@1@C=16pt{
\ar@<0.4ex>[r]\ar@<-0.4ex>[r] & 
}}
\newcommand{\mapstto}{\,\xymatrix@1@C=16pt{
\ar@{|->}[r] & 
}\,}
\newcommand{\mapsttoo}[1]{\xymatrix@1@C=16pt{
\ar@{|->}[r]^-{#1} & 
}}
\newcommand{\rightrightrightarrows}{\xymatrix@1@C=16pt{
\ar[r]\ar@<0.8ex>[r]\ar@<-0.8ex>[r] & 
}}
\newcommand{\longleftleftarrows}{\xymatrix@C=16pt{
 & \ar@<0.4ex>[l]\ar@<-0.4ex>[l]
}}
\newcommand{\leftleftleftarrows}{\xymatrix@1@C=16pt{
 & \ar[l]\ar@<0.8ex>[l]\ar@<-0.8ex>[l]
}}
\newcommand{\leftleftleftleftarrows}{\xymatrix@1@C=16pt{
 & \ar@<0.8ex>[l]\ar@<0.3ex>[l]\ar@<-0.3ex>[l]\ar@<-0.8ex>[l]
}}
\newcommand{\lcircle}{\ar@(ul,dl)} % arrow circle to the left of the node
\newcommand{\rcircle}{\ar@(ur,dr)}
\newcommand{\intto}{\ \xymatrix@1@C=16pt{
\ar@{^(->}[r] & 
}}

\newcommand{\eva}[1]
   {{\color{Teal}\it #1}}
\newcommand{\EvaNote}[1]
   {\marginpar{\raggedright\smaller \smaller \color{Teal}#1}}
%\DeclareRobustCommand{\rin}[1]{{\sethlcolor{yellow!30}\quad \hl{rin: #1}}}

\newcommand{\rin}[1]
   {{\color{blue}\it #1}}

\makeatletter
\newcommand{\switchmargin}{
\if@reversemargin
\normalmarginpar
\else
\reversemarginpar
\fi
}
\makeatother
\newcommand{\highlighteva}[1]{\ifmmode{\text{\sethlcolor{Teal}\hl{$#1$}}}\else{\sethlcolor{LightCyan}\hl{#1}}\fi}
\newcommand{\EvaNoteHl}[2]{\marginnote{\smaller \smaller \color{Teal}
#2}{\highlighteva{#1}}\switchmargin}

%  vim:ft=tex

\newtheoremstyle{gloss}{\topsep}{\topsep}{}{0pt}{\bfseries}{}{\newline}{\newline *{\bf #3} }
\theoremstyle{gloss}
\newtheorem*{defstar}{Definition}

\newtheoremstyle{newplain}{20pt}{0pt}{\it}{0pt}{\bfseries}{.}{1ex}{}
\theoremstyle{newplain}

% Number by section (1.1) by default, can override with
% \newcommand{\theoremnumstyle}{} (must be empty)
\ifthenelse{\isundefined\theoremnumstyle}
	{\newtheorem{theorem}{Theorem}[section] 
	\numberwithin{equation}{section}} % Number equations by section, like (1.1) instead of (1)
	{\ifthenelse{\equal\theoremnumstyle{}}
		{\newtheorem{theorem}{Theorem}}
		{\newtheorem{theorem}{Theorem}[section]
		\numberwithin{equation}{section}
		}
	}

\newtheorem{corollary}[theorem]{Corollary}
\newtheorem{claim}[theorem]{Claim}
\newtheorem{lemma}[theorem]{Lemma}
\newtheorem{proposition}[theorem]{Proposition}
\newtheorem{fact}[theorem]{Fact}

\newtheoremstyle{newtextthm}{20pt}{0pt}{}{0pt}{\bfseries}{.}{1ex}{}
\theoremstyle{newtextthm}
\newtheorem{definition}[theorem]{Definition}
\newtheorem{example}[theorem]{Example}
\newtheorem{problem}[theorem]{Problem}
\newtheorem{remark}[theorem]{Remark}
\newtheorem{notation}[theorem]{Notation}

\newtheorem*{theoremstar}{Theorem}
\newtheorem*{lemmastar}{Lemma}
\newtheorem*{corstar}{Corollary}
\newtheorem*{corollarystar}{Corollary}
\newtheorem*{propositionstar}{Proposition}
\newtheorem*{claimstar}{Claim}
\newtheorem*{examplestar}{Example}

% custom
\newcommand{\argforcustom}{}
\theoremstyle{newplain}
\newtheorem{helperforcustom}[theorem]{\argforcustom}
\newtheorem*{helperforcustomstar}{\argforcustom}
\newenvironment{custom}[1]{\renewcommand{\argforcustom}{#1}\begin{helperforcustom}}{\end{helperforcustom}}
\newenvironment{customstar}[1]{\renewcommand{\argforcustom}{#1}\begin{helperforcustomstar}}{\end{helperforcustomstar}}

\newenvironment{exercise}[1]{\hspace{1pt}\nn \large {\sc #1.}\hv \normalsize
\vspace{10pt}\\ }{} 
\newcommand{\subthing}[1]{\hv\large(#1)\hv\hv \normalsize }

\newcommand{\itemref}[1]{(\ref{#1})}

%\inputp{rs-extras2.tex}
\renewcommand{\showlabelfont}{\tiny\tt\color{mgreen}}

\newcommand{\HH}{\hat{H}}
\newcommand{\SE}{{}^S\! E}
\newcommand{\pr}{\operatorname{pr}}
\newcommand{\wbar}{\originalbar{w}}

\title{Towards the $p=3$ Kervaire Invariant Problem: The $E_2$-page for the homotopy fixed points spectral sequence computing $\pi_*({E_6}^{hC_9})$}
\author{Eva Belmont}
\author{Rin Ray}
\maketitle

\begin{abstract}
Hill, Hopkins, and Ravenel suggest that the last remaining Kervaire invariant problem, the case of $p=3$, can be solved by computing the homotopy fixed points spectral sequence for $\pi_* E_6^{hC_9}$. We prove a detection theorem for this case and use a conjectural form of the $C_9$-action on $E_6$ to compute the $E_2$ page of this spectral sequence away from homological degree zero.
\end{abstract}

\section*{Introduction}

The classical Kervaire invariant one problem asks whether or not a framed manifold can be surgically converted to a sphere. Browder \cite{Browder} reduced this to a problem about survival of the elements $b_j := h_j^2 \in \Ext^{2,2^{j+1}}_{\mathcal{A}_2}(\F_2,\F_2)$ in the 2-primary Adams
spectral sequence for the sphere. Hill, Hopkins, and Ravenel's seminal work \cite{HHR} solved the Kervaire Invariant One problem at the prime 2 for all but finitely many cases by showing that $b_j$ does not survive for $j \geq 7$. The remaining unknown case ($j=6$) was solved recently by Lin, Wang, and Xu \cite{LWX}.

At an odd prime $p$, there is an analogous question about the $p$-primary Adams spectral sequence, called the odd-primary Kervaire invariant problem, which asks whether the $p$-fold Massey products $b_j = -\an{h_j,\dots,h_j} \in \Ext^{2,2(p-1)p^{j+1}}_{\mathcal{A}_p}(\F_p,\F_p)$ are permanent cycles. 
For $p\geq 5$ this had been solved decades earlier by Ravenel \cite{ravenel-arf}, who showed that none of the $b_j$'s for $j > 0$ are permanent cycles. However, his arguments fail at the prime 3: for all odd primes, he shows that the analogous classes $\beta_{p^j/p^j}$ for $j\geq 1$ in the Adams--Novikov spectral sequence support differentials of length $2p-1$, but at the prime 3, the argument relating non-survival of $\beta_{p^j/p^j}$ to the Adams classes $b_j$ fails. For example, $\beta_{9/9}$ supports a nontrivial differential but $b_2 = \Phi(\beta_{9/9}\pm \beta_7)$ (where $\Phi$ is the Thom reduction map from the Adams--Novikov to the Adams spectral sequence) is a permanent cycle.

More precisely, given a height $h$ and a finite subgroup $C_{p^k}$ of the Morava stabilizer group $\mathbb{S}_h$, there is a morphism of spectral sequences from the Adams--Novikov spectral sequence for the sphere to the homotopy fixed points spectral sequence (henceforth ``hfpss'') for the action of $C_{p^k}$ on the Lubin-Tate spectrum $E_h$.
$$ \xymatrix{
\Ext^{*,*}_{\mathcal{A}_p}(\F_p,\F_p)\ar@{=>}[d]^-{\text{ASS}} & \Ext^{*,*}_{BP_*BP}(BP_*, BP_*)\ar[l]_-\Phi\ar[r]\ar@{=>}[d]^-{\text{ANSS}} & H^*(C_{p^k}; \pi_*E_n)\ar@{=>}[d]^-{\text{hfpss}}
\\\pi_*(\mathbb{S})^\hhat_p & \pi_*(\mathbb{S})_{(p)}\ar[l]\ar[r] & \pi_*(E_h^{hC_{p^k}})
}$$
The strategy is as follows. If $b_j$ survived, then it would be detected by a
permanent cycle $x$ in the Adams--Novikov spectral sequence.
%By \cite[Theorem 9.4]{MRW}, which describes the Thom reduction map, $x = \beta_{p^j/p^j} +
%\til{x}$ where $\til{x}$ is an algebraic combination of other divided $\beta$
%elements.
The goal is to show that the image of $x$ in $\pi_*(E_h^{ hC_{p^k}})$ is zero
for degree reasons, and then (the ``detection'' step) use this to deduce that
$x$ must be zero in $\pi_*(\mathbb{S})_{(p)}$, a contradiction. For $p\geq 5$,
this works for $k=1$, $h=p-1$, but at $p=3$, the detection step fails
with these values. Hill, Hopkins, and Ravenel assert that the strategy works for $p=3$ with $k=2$, $h=6$.

\begin{custom}{Conjecture}[Hill--Hopkins--Ravenel, \cite{HHR-odd}] \label{conj:HHR-odd}
~\begin{enumerate} 
\item (Detection) Every $x \in \Ext_{BP_*BP}^{2,4\cdot 3^{j+1}}$ such that $\Phi(x) = b_j$ has nontrivial image in $H^2(C_9; \pi_{4\cdot 3^{j+1}}E_6)$.
\item (Periodicity) $\pi_*E_6^{hC_9}$ is 972-periodic.
\item (Gap) $\pi_{-2}E_6^{hC_9} = 0$.
\end{enumerate}
\end{custom}
Since $x$ converges to a class in stem $4\cdot 3^{j+1}-2\equiv -2 \pmod {972}$ if $j \geq 4$, this would show that $b_j$ does not survive for $j\geq 4$.

A similar strategy is used in \cite{HHR} at $p=2$, but they replace $E_n^{hC_{p^k}}$ with a spectrum $\Omega$ that is related to a norm of the $C_2$-spectrum $MU_\R$. This strategy is not available at $p=3$ because there is currently no known odd-primary analogue of $MU_\R$. 
If such a spectrum ``$MU_{\mu_3}$'' were to be constructed, then there would be an alternate
version of Conjecture \ref{conj:HHR-odd} which replaces $E_6^{hC_9}$ by a norm
of $MU_{\mu_3}$. Hill, Hopkins, and Ravenel \cite{HHR-odd} have sketched the
proofs of the $MU_{\mu_3}$ versions of the detection, periodicity, and gap
conjectures under the assumption that $MU_{\mu_3}$ exists. We make no such
assumption, and instead observe below that their proof of the $MU_{\mu_3}$
detection theorem can be adapted to prove the $E_6$ version, Conjecture
\ref{conj:HHR-odd}(1). 

\begin{proposition}[$E_6$ Detection]\label{prop:detection0}
Conjecture \ref{conj:HHR-odd}(1) is true.
\end{proposition}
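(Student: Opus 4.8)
The plan is to transport the Hill--Hopkins--Ravenel argument for the $MU_{\mu_3}$ detection theorem \cite{HHR-odd} into the Lubin--Tate setting, with $E_6^{hC_9}$ in place of a norm of $MU_{\mu_3}$; this is the $p=3$, height $6$ analogue of Ravenel's proof of odd-primary detection for $p\ge 5$ \cite{ravenel-arf}, which works by restricting to a cyclic subgroup of $\mathbb{S}_{p-1}$. The relevant map of $E_2$-pages is the composite
$$ \Ext^{*,*}_{BP_*BP}(BP_*,BP_*)\longrightarrow H^*(\mathbb{G}_6;\pi_*E_6)\xrightarrow{\ \operatorname{res}\ }H^*(C_9;\pi_*E_6), $$
where the first map classifies the Honda formal group law of height $6$, $\mathbb{G}_6$ is the (extended) Morava stabilizer group, and the composite is the map on $\Ext$/cohomology induced by the square in the introduction. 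The statement to prove is thus that $\operatorname{res}(x)\neq 0$ in $H^{2}(C_9;\pi_{4\cdot 3^{j+1}}E_6)$ for every $x$ in $\Ext^{2,4\cdot 3^{j+1}}_{BP_*BP}$ with $\Phi(x)=b_j$.

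First I would identify the source. By the computation of $\Ext^{2}_{BP_*BP}(BP_*,BP_*)$ of Miller, Ravenel, and Wilson, the finite group $\Ext^{2,4\cdot 3^{j+1}}_{BP_*BP}$ (with $4\cdot 3^{j+1}=2(p-1)p^{j+1}$ at $p=3$) is spanned by $\beta$-elements, among which $\beta_{3^{j}/3^{j}}$ satisfies $\Phi(\beta_{3^j/3^j})=b_j$ modulo a correction lying in $\ker\Phi$ (the $\beta_7$ of the introduction when $j=2$). Thus the set of admissible $x$ is the coset $\beta_{3^j/3^j}+\big(\ker\Phi\cap\Ext^{2,4\cdot 3^{j+1}}_{BP_*BP}\big)$, and it suffices to show (a) $\operatorname{res}(\beta_{3^j/3^j})\neq 0$, and (b) $\operatorname{res}(\beta_{3^j/3^j})$ does not lie in $\operatorname{res}\big(\ker\Phi\cap\Ext^{2,4\cdot 3^{j+1}}_{BP_*BP}\big)$, so that no correction term cancels the leading class.

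For (a) I would avoid computing $H^2(C_9;\pi_*E_6)$ outright and instead factor the detection map through a reduction of coefficients along an invariant regular sequence in the maximal ideal of $\pi_0 E_6$, landing in group cohomology of the form $H^*(C_9;\mathbb{F}[u^{\pm 1}])$ on which the $C_9$-action is governed only by the Honda formal group law. On such a target the image of $\beta_{3^j/3^j}$ should be recognizable as an explicit nonzero class built from a transfer/norm together with a Bockstein, as in Ravenel's $C_p\subset\mathbb{S}_{p-1}$ computations and Nave's analysis of $\beta$-elements in Morava stabilizer cohomology; HHR--odd reach the same conclusion for $MU_{\mu_3}$ using the slice filtration, and here it becomes an explicit $C_9$-equivariant Lubin--Tate computation. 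For (b) I would push the finitely many correction classes through the same reduced target and verify that they map into a complementary summand or vanish.

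The main obstacle, and where the argument does real work, is making (a) and (b) unconditional. HHR--odd's sketch leans on the clean $RO(C_9)$-graded bookkeeping available for a norm of $MU_{\mu_3}$; converting this into honest assertions about $H^2(C_9;\pi_*E_6)$ requires enough control of the $C_9$-action on $\pi_*E_6$ --- or at least on its associated graded for the adic filtration --- to pin down both $\operatorname{res}(\beta_{3^j/3^j})$ and the span of the corrections, uniformly in $j$. The point to check is that the input genuinely needed is forced by general deformation theory (the action on $u$ and the leading behavior of the action on the $u_i$) rather than by the conjectural action formulas used elsewhere in the paper, so that Proposition \ref{prop:detection0} is unconditional; confirming that this generic part of the action suffices is the delicate step.
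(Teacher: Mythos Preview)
Your high-level decomposition matches the paper's: write any $x$ with $\Phi(x)=b_j$ as $\pm\beta_{3^j/3^j}$ plus a correction in the span of the remaining $\beta$-elements in that bidegree, then show the leading term is detected while each correction dies. But the mechanism you propose for (a) and (b) --- reducing $\pi_*E_6$ modulo an invariant regular sequence to reach something like $H^*(C_9;\mathbb{F}[u^{\pm 1}])$ --- is not what the paper does, and your own final paragraph correctly identifies why that route is precarious: it requires control of the $C_9$-action on $\pi_*E_6$ (or its associated graded), which is exactly the open problem the rest of the paper needs Conjecture~\ref{conj:action} to handle.

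The key idea you are missing is to map \emph{out} of $\pi_*E_6$ rather than quotient it. Set $A=\Z_3[\zeta]$ with $\zeta$ a primitive $9$th root of unity, $\pi=1-\zeta$, and $R_*=A[w^{\pm 1}]$, and equip $R_*$ with the formal $A$-module whose logarithm is $\sum_{k\ge 0}\pi^{-k}w^{3^k-1}x^{3^k}$. A direct computation shows this formal group law has height $\le 6$, so by the universal property of Lubin--Tate theory the classifying map $BP_*\to R_*$ factors $C_9$-equivariantly through $\pi_*E_6$; the $C_9$-action on $R_*$ is the completely explicit $\gamma(w)=\zeta w$ coming from the $[\zeta]$-endomorphism, so no knowledge of the action on $\pi_*E_6$ is needed. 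The detection composite then lands in $H^*(C_9;\bar R_*)$ with $\bar R_*=R_*/3$. For (a), one computes $\widetilde\beta_{3^j/3^j}\equiv t_1^{3^{j+1}}\pmod{(3,v_1^6)}$ in the cobar complex, uses that $\lambda(t_1)$ is a unit and $\lambda(v_1^6)=0$ in $\bar R_*$, and checks $H^1(C_9;R_{4\cdot 3^{j+1}})=0$ so the Bockstein is injective. For (b), the corrections $\beta_{c(j,k)/3^{j-2k}}$ vanish under $\lambda$ by a valuation argument from \cite{HHR-odd}: there is a valuation on $BP_*BP$ with $\|v_n\|=\max(0,(6-n)/6)$ under which each correction has $\|\beta\|>2$, and $\lambda$ is valuation-nondecreasing into $R_*$ (where $\|\pi\|=1/6$, $\|w\|=0$). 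This is what makes the detection theorem unconditional and resolves the concern in your last paragraph.
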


This is proved in Proposition \ref{prop:detection}.
Our overall strategy is to
prove the gap and periodicity theorems by computing the hfpss for $E_6^{hC_9}$, using the conjectural model for the $C_9$ action on $\pi_*E_6$ (Conjecture \ref{conj:action}). If one completely understood the map from the Adams--Novikov spectral sequence
$$ \Ext_{BP_*BP}^{*.*}(BP_*, BP_*) \to H^*(C_9; \pi_*E_6) $$
in addition to all of the hfpss differentials, this detection statement would be unnecessary. However, the comparison map is difficult to understand explicitly in terms of the Lubin--Tate coordinates; Proposition \ref{prop:detection0} is an alternative approach which reduces detection to computing the composite map to the simplest possible group cohomology $H^*(C_9; \bar{R}_*)$ (see Section \ref{sec:detection}).

%$$\Ext^{*,*}_{BP_*BP}(BP_*, BP_*)\to H^*(C_9; \pi_*E_6) \to H^*(C_9; \bar{R}_*).$$

%The use of detection allows us to avoid needing to understand the comparison map $\Ext^{*,*}_{BP_*BP}(BP_*, BP_*)\to H^*(C_9; \pi_*E_6)$ explicitly in terms of Lubin-Tate coordinates, and instead it suffices to understand the map to a much smaller group cohomology $H^*(C_9; \pi_*E_6/)$ in Section.

The first step when computing a hfpss $H^*(G; \pi_*E)\implies \pi_*E^{hG}$ for a
$G$-spectrum $E$ is to understand the action of the group $G$ on the
coefficient ring $\pi_*E$.
The ring $\pi_0E_h \isom \W(\F_{p^h})[[u_1,\dots,u_{h-1}]]$
has a natural action of the Morava stabilizer group which comes from its
universal property: it carries the universal deformation of the Honda formal
group law $F_h$, and $\mathbb{S}_h$ is the group of automorphisms of
$F_h$ over $\F_{p^h}$ (see \cite{rezk-hopkins-miller}). However, this
description does not make it easy to write down the action explicitly in terms
of the generators $u_i$. In general, this is an open problem; see
\cite{nave} for some odd primary results at height $p-1$ modulo a large ideal,
and \cite{beaudry-models} for explicit results at all
heights for the prime 2.

However, a conjecture by the second author and independently by Hill, Hopkins, and Ravenel states that for any prime and any finite subgroup
$G\subseteq \mathbb{S}_h$, $\pi_0 E_h$ is isomorphic (as a $\W(\F_{p^h})[G]$-module)
to a ring with an easily describable $G$-action. Though the explicit isomorphism to the
presentation in terms of $BP(h)_* \simeq
\mathbb{W}(\F_{p^h})[[v_1,\dots,v_{h-1}]][u^{\pm}]$ is complicated, there is a natural $G$-equivariant isomorphism between the rings which allows us to work instead with a (graded induced reduced) regular representation; see Conjecture \ref{conj:action}. This conjecture is the subject of work in progress by the second author. In this paper,
we assume the conjecture and use that action to produce an explicit formula for
the $E_2$ page of the hfpss
$$  E_2 = H^*(C_9; \pi_* E_6) \implies \pi_* E_6^{hC_9}. $$
This is the first step in carrying out a variation of Hill, Hopkins, and Ravenel's program for
solving the $p=3$ Kervaire problem. Determining the higher differentials is work
in progress by the authors; this would solve the Periodicity and Gap conjectures.
We also mention
\cite{hill-EOn}, which assumes Hopkins' conjecture and provides a sketch for the hfpss for
$\pi_* E_6^{hC_3}$.

%The Goerss--Hopkins--Miller theorem says that the $E_\infty$ spectrum $E_n$ can be
%constructed such that the action of $\mathbb{S}_n$ on $\pi_0E_n$ extends to an
%$E_\infty$-action on $E_n$ itself.

The formulation given below of the conjecture is due to the second author.

\begin{custom}{Conjecture} [Hill--Hopkins--Ravenel, Ray] \label{conj:action}
Let $p$ be any prime and let $\W := \W(\F_{p^{p^{k-1}(p-1)}})$.
Let $\bar{\rho} = \coker(\phi:\W \to \W[C_p])$ defined by
$\phi(1) = [1] + [\sigma] + \dots + [\sigma^{p-1}]$ for $C_p = \an{\sigma}$, where the generators of
$\W[C_p]$ have degree $-2$.
Then there is a $\W[C_{p^k}]$-module isomorphism
$$ \pi_*(E_{p^{k-1}(p-1)}) \isom
(\Sym(\Ind_{C_p}^{C_{p^k}}\bar{\rho})[\Delta^{-1}])^\hhat_I. $$
If $x_0$ denotes the image of $[1]\in \bar{\rho}$ in
$\Ind_{C_p}^{C_{p^k}}\bar{\rho}$ and $\gamma$ generates $C_{p^k}$, then
$\Delta = \prod_{i=0}^{p^k-1}\gamma^ix_0$ and $I = (p, x_0-\gamma x_0, \gamma x_0-\gamma^2 x_0,
\dots, \gamma^{p^k-1}x_0-\gamma^{p^k-2}x_0)$.
\end{custom}

\begin{remark} The reduced regular representation $\overline{\rho}$ of $C_p$ is usually written as the kernel of the augmentation ideal $\ker (\W[C_p] \to \W)$. This is isomorphic as a $\W[C_p]$-module to the formulation we use above, $\coker(\phi:\W \to \W[C_p])$. \end{remark} 
% Original version
%Let $p$ be any prime, let $\W := \W(\F_{p^{p^{k-1}(p-1)}})$, and let $\gamma$ be the generator of $C_{p^k}$ for $k\geq
%1$. Let $\bar{\rho}_{C_{p^k}}$ denote the kernel of the quotient $\W[C_{p^k}]\to
%\W[C_{p^k}/C_p]$ and write $\W[C_{p^k}] = \W\{ x_0,\dots,x_{p^k} \}$ with $|x_i|
%= -2$. Then there is a $\W[C_{p^k}]$-module isomorphism
%$$ \pi_*(E_{p^{k-1}(p-1)}) \isom ((\Sym\bar{\rho}_{C_{p^k}})[\Delta^{-1}])^\hhat_I $$
%where $\Delta = x_0\cdot \ldots \cdot x_{p^k-1}$, $I = (p, x_0-x_1, x_1-x_2,
%\dots, x_{p^k-1}-x_{p^k-2})$, and $C_{p^k}$ acts by cyclically permuting the
%$x_i$'s.

We will assume Conjecture \ref{conj:action} at $p=3$, $k=2$.
Write $M = \Sym(\Ind_{C_3}^{C^9}\bar{\rho})$.
We study the $E_2$ page of the hfpss
\begin{equation}\label{eq:hfpss} E_2^{n,t} =  H^n(C_9; \pi_tE_6) \isom H^n(C_9;
((\Delta^{-1} M)^\hhat_I)_t) \implies
\pi_{t-n}E_6^{C_9}\end{equation}
via a localized Serre spectral sequence (see Proposition \ref{lem:serre-tate})
\begin{equation}\label{eq:serre-intro} \SE^{p,q}_2 = H^p(C_9/C_3; \hat{H}^q(C_3;
(\Delta^{-1}M)^\hhat_I)) \implies \hat{H}^{p+q}(C_9; (\Delta^{-1}M)^\hhat_I). \end{equation}
Working with Tate cohomology significantly simplifies the problem while only
losing information in $H^0$, which is expected to not be needed for the Kervaire
problem.
\begin{proposition}\label{prop:serre-collapse}
The Serre spectral sequence \eqref{eq:serre-intro} collapses on the $E_4$ page.
The only differentials and extensions (see Lemma \ref{lem:d3}) come from
the Serre spectral sequence for trivial coefficients (Lemma \ref{lem:serre-triv}).
\end{proposition}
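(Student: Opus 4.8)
The plan is to prove that after passing to $C_3$-Tate cohomology the residual $C_9/C_3$-action becomes trivial, so that the $E_2$-page of \eqref{eq:serre-intro} is a base change of the $E_2$-page with trivial coefficients, and then to transport the differentials and extensions along this comparison. I will use throughout that \eqref{eq:serre-intro}, being the localized Serre spectral sequence of the group extension $1\to C_3\to C_9\to C_9/C_3\to 1$, is multiplicative, and that the unit map $\F_3\to(\Delta^{-1}M)^\hhat_I$ makes it a module over the trivial-coefficient Serre spectral sequence $H^\ast(C_9/C_3;\hat H^\ast(C_3;\F_3))\implies\hat H^\ast(C_9;\F_3)$ analyzed in Lemma \ref{lem:serre-triv}.

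The first step is to pin down the fiber cohomology $F^\ast:=\hat H^\ast(C_3;(\Delta^{-1}M)^\hhat_I)$ together with its residual $C_9/C_3$-action. Because $C_9$ is abelian, Mackey's formula gives $\mathrm{Res}_{C_3}\Ind_{C_3}^{C_9}\bar\rho\isom\bar\rho^{\oplus 3}$, hence $\mathrm{Res}_{C_3}M\isom\Sym(\bar\rho)^{\otimes 3}$, on which a generator $\gamma$ of $C_9$ cyclically permutes the three tensor factors up to a $\sigma$-twist on the wrap-around coming from the induction. One checks that $\Delta$ is a product of three $C_3$-norms, hence $C_3$-invariant, so inverting $\Delta$ and completing at $I$ in \eqref{eq:hfpss} are $C_3$-equivariant operations and $F^\ast$ is computed from this model (the fiber computation carried out above). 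The key claim is that the residual $C_9/C_3$-action on $F^\ast$ is trivial: the permutation of the three $\Sym(\bar\rho)$-factors, although genuinely nontrivial on $\mathrm{Res}_{C_3}M$ and even on the associated graded of the $I$-adic filtration, becomes trivial once one takes $C_3$-Tate cohomology and performs the localization and completion. Granting this, $H^p(C_9/C_3;F^\ast)\isom H^p(C_9/C_3;\F_3)\otimes_{\F_3}F^\ast$ for $p>0$ (the $p=0$ line may receive an extra contribution from a free $\F_3[C_9/C_3]$-summand of $F^\ast$, which is inert), so the $E_2$-page of \eqref{eq:serre-intro} is obtained from that of the trivial-coefficient Serre spectral sequence by extending scalars along $\hat H^\ast(C_3;\F_3)\to F^\ast$.

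It then remains to compare. By $E_2^{\mathrm{triv}}$-linearity of the differentials together with the description of the $d_3$ in Lemma \ref{lem:d3}, every differential in \eqref{eq:serre-intro} is induced from one in the trivial-coefficient Serre spectral sequence, which by Lemma \ref{lem:serre-triv} has a single nonzero family of differentials $d_3$ --- the transgression carrying the degree-one class of $\hat H^\ast(C_3;\F_3)$ to the degree-three class of $H^\ast(C_9/C_3;\F_3)$, nonzero precisely because $C_9$ is a nonsplit extension of $C_3$ by $C_3$ --- and collapses at $E_4$; the same module structure on the abutment $\hat H^\ast(C_9;(\Delta^{-1}M)^\hhat_I)$ over $\hat H^\ast(C_9;\F_3)$ then forces the extensions to be the ones coming from trivial coefficients. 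Hence \eqref{eq:serre-intro} collapses at $E_4$ with only the stated differentials and extensions. I expect the main obstacle to be the triviality of the residual $C_9/C_3$-action on $F^\ast$: although $\mathrm{Res}_{C_3}M$ is transparent, $\gamma$ really does mix the three copies of $\Sym(\bar\rho)$, and one must verify that, together with the $\sigma$-twisted wrap-around, this mixing produces the trivial action after applying $\hat H^\ast(C_3;-)$, inverting $\Delta$, and completing at $I$ --- in particular that the completion and localization are exact enough and commute with $\hat H^\ast(C_3;-)$ so that the check reduces to a finite computation --- and, secondarily, that any free $\F_3[C_9/C_3]$-summands of $F^\ast$ support no differentials and enter no nontrivial extensions.
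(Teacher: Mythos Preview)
Your central claim---that the residual $C_9/C_3$-action on $F^*:=\hat H^*(C_3;(\Delta^{-1}M)^\wedge_I)$ is trivial---is false, and the rest of the argument does not survive this. Before completion one has $\hat H^*(C_3;M)\cong\k[b_1^\pm,d_1,d_2,d_3]\otimes\Lambda_\k[c_1,c_2,c_3]$ with the generator of $C_9/C_3$ cyclically permuting $\{d_1,d_2,d_3\}$ and $\{c_1,c_2,c_3\}$; localization and $I$-adic completion do not kill this permutation (Lemma~\ref{lem:completed-C3-coh}). In particular $H^0(C_9/C_3;F^*)$ is a proper submodule of $F^*$---computing it is the entire content of Section~\ref{sec:c1-d1}---and the $E_2$-page is \emph{not} a base change of the trivial-coefficient $E_2$ along $\hat H^*(C_3;\k)\to F^*$ as you assert. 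Your parenthetical about ``free $\F_3[C_9/C_3]$-summands'' is already inconsistent with a trivial action.

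What is actually true, and what the paper exploits, is that $F^*$ decomposes as a $\k[C_9/C_3]$-module into trivial summands $\k\{\bar c^{\,\epsilon}s_3^i\}$ (with $\bar c=c_1c_2c_3$, $s_3=d_1d_2d_3$) plus free summands, with no $2$-dimensional indecomposables (Lemma~\ref{lem:inner-coh}). Hence $\SE_2^{p,*}$ for $p>0$ is only $\k[b_1^\pm,b_2,s_3^\pm]\otimes\Lambda_\k[a_2,\bar c]$, while $\SE_2^{0,*}$ is much larger and contains the classes $\delta,f_i,F_i,s_1,s_2$ coming from the free summands; these are all $b_2$-torsion (Proposition~\ref{prop:serre-E2-summary}). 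The paper's argument (Lemma~\ref{lem:d3}) then runs from this explicit $E_2$: any target of a differential lies in $p>0$, hence is $b_2$-periodic; since $b_2$ is a permanent cycle, the Leibniz rule forces $b_2$-torsion sources to have $b_2$-torsion---hence zero---image; the remaining possible sources and targets lie in $\k[b_1^\pm,b_2,s_3^\pm]\otimes\Lambda_\k[a_2,\bar c]$, and because differentials preserve internal degree while the monomials $\bar c^{\,\epsilon}s_3^i$ occupy distinct internal degrees, everything reduces to the trivial-coefficient spectral sequence. Your instinct to use linearity over the trivial-coefficient spectral sequence is right, but it cannot be applied until this $b_2$-torsion versus $b_2$-periodic dichotomy on $\SE_2$ has been established, and that requires knowing the $C_9/C_3$-module structure of $F^*$ rather than assuming it away.
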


%\begin{lemma} Let $\Lambda := \Sym \ker k[C_9] \to k[C_3]$. Then, $\Lambda \simeq \Sym(k[C_9])/I,$ where $I$ is generated by a regular sequence $V$ of length 3. 
%\end{lemma} 

%\begin{theorem} \label{kevaire} For $p=3$ and $h=6$, the Serre spectral sequence of Tate cohomology collapses on the first page $$\widehat{H}^*(C_9, \Sym(k[C_9]) \otimes \Lambda^*(V)) \Rightarrow \widehat{H}^*(C_9, \Lambda).$$  \rin{is this what we showed?}
%\end{theorem}\EvaNote{I commented out your theorem statement involving the Koszul spectral sequence. I thought we abandoned the Koszul spectral sequence in favor of the Serre spectral sequence. Maybe you just wanted something like Proposition \ref{prop:serre-collapse}.}

\textbf{Notation.} For a ring $R$, let $R[x_1,\dots,x_n]\tensor \Lambda_R[y_1,\dots,y_d]$ denote
the graded commutative algebra over $R$ on even generators $x_i$ and odd
generators $y_i$.

Our main theorem is as follows.
\begin{theorem}
Assume Conjecture \ref{conj:action} for $p=3$, $k=2$. Let $\k = \F_{3^6}$. 
As a module over $\til{S}_\k^*:= (\W(\k)/9)[b_1^\pm,s_3^\pm][[s_1,s_2]]$,
the $E_2$ page of the hfpss \eqref{eq:hfpss} is isomorphic in degrees $n>0$
to
$$ \til{S}_\k^*\{ 1,\bar{c},\delta,\bar{c}\delta,f_0,\dots,f_5,F_0,\dots,F_5 \} /
(3s_1,3s_2,3\delta,3\bar{c}\delta,3f_i,3F_i). $$
The degrees $(n,t)$ of the generators are as follows.
\begin{align*}
|s_1| & = (0,-6) & |s_2| & = (0,-12) & |s_3|  & = (0,-18) & |\delta| & = (0,-18)
\\|b_1| & = (2,0) & |\bar{c}| & = (3,-6) & |\bar{c}\delta| & = (3,-24)
\\|f_0| & = (1,-2) & |f_1| & = (1,-8) & |f_2| & = (1,-8) & |f_3| & = (1,-14) &
|f_4| & = (1,-14) & |f_5| & = (1,-20)
\\|F_0| & = (2,-4) & |F_1| & = (2,-10) & |F_2| & = (2,-10) &
|F_3| & = (2,-16) & |F_4| & = (2,-16) & |F_5| & = (2,-22)
\end{align*}
\end{theorem}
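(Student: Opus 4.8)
The plan is to make Conjecture~\ref{conj:action} explicit, run the localized Serre spectral sequence~\eqref{eq:serre-intro}, and read off the presentation. Write $C_9 = \an{\gamma}$, $C_3 = \an{\gamma^3}$, and $y_i := \gamma^i x_0$; Conjecture~\ref{conj:action} then identifies $\pi_*E_6$ with $(\Delta^{-1}M)^\hhat_I$, where
\[ M = \Sym(\Ind_{C_3}^{C_9}\bar\rho)\isom \W[y_0,\dots,y_8]/(y_0+y_3+y_6,\ y_1+y_4+y_7,\ y_2+y_5+y_8), \]
each $|y_i| = -2$, with $\gamma$ acting by $y_i\mapsto y_{i+1}$ (indices mod $9$), $\Delta = y_0 y_1\cdots y_8$, and $I = (3,y_0-y_1,\dots,y_7-y_8)$. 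As a first sanity check, after inverting $\Delta$ each $y_i$ becomes a unit of degree $-2$, so $(\Delta^{-1}M)^\hhat_I$ is even-periodic, as $\pi_*E_6$ is; moreover modulo $I$ all the $y_i$ coincide with a common value $\bar y$, and $\Delta\equiv\bar y^{\,9}$. By Proposition~\ref{lem:serre-tate} the $C_9$-Tate cohomology of this ring is the abutment of
\[ \SE_2^{p,q} = H^p\!\left(C_9/C_3;\,\HH^q(C_3;(\Delta^{-1}M)^\hhat_I)\right)\implies \HH^{p+q}(C_9;(\Delta^{-1}M)^\hhat_I), \]
and since ordinary and Tate group cohomology agree in cohomological degrees $>0$, identifying this abutment suffices.

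The heart of the argument is the fiber term $\HH^*(C_3;(\Delta^{-1}M)^\hhat_I)$ for $C_3 = \an{\gamma^3}$. Partitioning $\{0,\dots,8\}$ by residue mod $3$ yields a $C_3$-equivariant identification $M\isom A\tensor_\W B\tensor_\W C$ with $A = \W[y_0,y_3,y_6]/(y_0+y_3+y_6)$ the symmetric algebra on a reduced regular representation of $C_3$ (and similarly $B$, $C$), so that $\Delta = \Delta_A\Delta_B\Delta_C$ is the product of the three $C_3$-invariant norms. I would then compute $\HH^*(C_3;\,-)$ using the $2$-periodic complete resolution for the cyclic group $C_3$, which reduces the problem to the invariant subring, the image of the norm map, and the kernel of the norm: the $\Delta$-localization makes every $y_i$ a unit, and the $I$-completion puts the ring in a completed power-series form over $\W[\bar y^{\pm}]$ in the conormal directions of the mod-$I$ diagonal, with $C_3$ acting through those conormal directions. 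The output should be a module that is free over $\HH^*(C_3;\W) = (\W/3)[\beta^{\pm 1}]$ ($|\beta| = 2$), finitely generated once one completes at the non-invertible symmetric invariants, with internal degrees forced by the $y_i$. This is the step I expect to be the main obstacle: getting a usable closed form requires controlling how the $\Delta$-localization and the $I$-completion interact with the $C_3$-action on $\Sym$ of a reduced regular representation over $\W = \W(\F_{3^6})$, a representation that does not decompose because this ring contains no primitive cube root of unity. Organized by the surviving even invariants, the generators should be $1$, a degree-$(0,-18)$ class $\delta$ built from $\Delta_A,\Delta_B,\Delta_C$, and the families that produce the $f_i$ and $F_i$.

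Next I would determine the residual $C_9/C_3\isom C_3$-action on this fiber cohomology — the symmetric parameters are essentially fixed while the off-diagonal generators get cyclically permuted — and form $H^*(C_9/C_3;\,-)$. Here Proposition~\ref{prop:serre-collapse} supplies the structure: the spectral sequence collapses at $E_4$, and the only differentials and extensions are those inherited from the Serre spectral sequence for trivial coefficients (Lemmas~\ref{lem:serre-triv} and~\ref{lem:d3}). In particular the $d_3$ produces $b_1$ in bidegree $(2,0)$ and the degree-$(3,-6)$ class $\bar c$ — which is why $\{1,\bar c\}$, and likewise $\{\delta,\bar c\delta\}$, appear as paired sets of generators — and the $E_4 = E_\infty$ collapse pins down the remaining module structure.

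It remains to assemble the answer. The surviving even invariants organize into $\til S_\k^* = (\W(\k)/9)[b_1^\pm,s_3^\pm][[s_1,s_2]]$: here $b_1$ is the $2$-periodicity class; $s_3$ is the degree-$(0,-18)$ parameter reducing mod $I$ to the unit $\bar y^{\,9}$ (the image of $\Delta$), hence invertible; and $s_1, s_2$ are the remaining even parameters in degrees $(0,-6)$ and $(0,-12)$, which vanish modulo $I$ (their reductions involve $9 = 0$) and therefore generate a power-series rather than a polynomial subring. Over $\til S_\k^*$ the $E_2$-page in degrees $n > 0$ is then free on the sixteen classes $\{1,\bar c,\delta,\bar c\delta,f_0,\dots,f_5,F_0,\dots,F_5\}$ with the listed bidegrees. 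Finally, the relations $3s_1 = 3s_2 = 3\delta = 3\bar c\delta = 3f_i = 3F_i = 0$ record which classes retain a $\W(\k)/9$- rather than a $\W(\k)/3$-structure; this is controlled by the hidden $3$-extensions in the Serre spectral sequence, which by Proposition~\ref{prop:serre-collapse} take the same form as in the trivial-coefficient case (where $\HH^n(C_9;\W(\k)) = \W(\k)/9$ for $n$ even), so that $1$, $\bar c$, $b_1$, and $s_3$ lift to $\W(\k)/9$ while every other generator is annihilated by $3$. Combining these identifications gives the stated presentation.
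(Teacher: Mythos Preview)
Your outline follows the paper's architecture (compute $\HH^*(C_3;-)$, take $C_9/C_3$-cohomology via the localized Serre spectral sequence, invoke Proposition~\ref{prop:serre-collapse}), and your intuition about why $s_3$ is invertible while $s_1,s_2$ are topologically nilpotent is correct. But there are two genuine gaps.

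First, you skip entirely the step that is actually the hardest in the paper: the invariant theory producing the sixteen module generators. After the K\"unneth decomposition, the paper obtains $\HH^*(C_3;M)\isom \k[b_1^\pm,d_1,d_2,d_3]\otimes\Lambda_\k[c_1,c_2,c_3]$ with $C_9/C_3$ permuting the $c_i$ and $d_i$ cyclically (Lemma~\ref{lem:inner-coh}); the $f_i$ and $F_i$ are then specific transfers $\tr(c_1 d_1^a d_2^b)$ and $\tr(c_1c_2 d_1^a d_2^b)$, and showing that exactly these six in each family suffice, and that they are free over $S=\k[s_1,s_2,s_3]$, is the content of Section~\ref{sec:c1-d1} (including a Gr\"obner-basis computation). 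Your proposal says only that ``the families that produce the $f_i$ and $F_i$'' will appear, without any mechanism to find or bound them. You also misidentify the main obstacle as the interaction of localization and completion with the $C_3$-representation theory; in fact the paper handles $M$ first without localizing or completing (Sections~\ref{sec:p>0}--\ref{sec:c1-d1}) and only afterward shows that passing to $(\Delta^{-1}M)^\hhat_I$ simply replaces $\k[s_1,s_2,s_3]$ by $\k[s_3^\pm][[s_1,s_2]]$ (Section~\ref{sec:completion}), via a careful change of basis and a Mittag--Leffler argument that your sketch does not supply.

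Second, your description of the differentials and extensions is garbled. The class $b_1$ is the inner periodicity class in $\HH^2(C_3;\W)$ and $\bar c=c_1c_2c_3$ lives in $\HH^3(C_3;M)$; neither is ``produced by $d_3$''. The $d_3$ in Lemma~\ref{lem:d3} is $d_3(a_2)=b_1^{-1}b_2^2$, which truncates the $b_2$-towers, and the hidden extension $3b_1=b_2$ then promotes the $b_1$-periodic part from $\k$ to $\W/9$. This is what forces $1,\bar c,b_1,s_3$ to live over $\W/9$ while the other generators are $3$-torsion; your final paragraph gets the conclusion right but the mechanism wrong. (Relatedly, you call the module ``free'' on sixteen generators and then impose the $3$-torsion relations; it is not free over $\til S_\k^*$.)
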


%To continue this, to begin moving past the $E_2$-page of by studying the geometric fixed points of $(E_6)_*$ under the action of $C_9$, this will hopefully give a method of constructing differentials. 

This is the first step towards proving the Periodicity and Gap theorems as
stated in Conjecture \ref{conj:HHR-odd}.

\textbf{Outline.} In Section \ref{sec:serre-triv} we do a preliminary
calculation and discuss a variant of the Serre spectral sequence that converges
to Tate cohomology. Then we turn to the main subject of the paper, the Serre spectral sequence
$\SE_r^{p,q}((\Delta^{-1}M)^\hhat_I)$
converging to the hfpss $E_2$ page for the spectrum $E_6$. In Section \ref{sec:p>0} we set up the spectral sequence
and compute $\SE_2^{p,q}(M)$ for $p>0$, and in Section \ref{sec:c1-d1} we compute
the more complicated $p=0$ part. These sections work with a simplified version
$M$ of $\pi_*E_6$ that is missing a localization and completion. In Section \ref{sec:completion} we 
relate $\SE_2^{p,q}((\Delta^{-1} M)^\hhat_I)$ to $\SE_2^{p,q}(M)$.
In Section \ref{sec:higher-serre} we consider
higher differentials and extensions in the Serre spectral sequence for
$\SE_2^{p,q}((\Delta^{-1}M)^\hhat_I)$ and state the main result (Theorem \ref{thm:main-completion}).
At the end of this section we discuss an $RO(C_9)$-graded version of the
hfpss. This extra grading does not have a major impact on the form of the hfpss $E_2$
page, but we expect them to become useful when calculating higher hfpss
differentials. Finally, in Section \ref{sec:detection} we prove the $E_6$ version of the detection theorem, Proposition \ref{prop:detection}.

\textbf{Acknowledgments.} 
	The first author is supported by the NSF grant DMS-2204357 and acknowledges the support and hospitality of the MPI Bonn and the Isaac Newton Institute for Mathematical Sciences, Cambridge under the Equivariant Homotopy in Context program, supported by EPSRC grant no EP/Z000580/1, and a Young Research Fellowship at the University of M\"unster. 
	The second author is supported by the Dynamics–Geometry–Structure group at Mathematics M\"unster which is funded by the DFG under Germany's Excellence Strategy EXC 2044–390685587, and acknowledges a visit to the MPI Bonn. We would like to thank Dominic Culver, Mingcong Zeng, and Tobias Barthel for helpful discussions during our stay at the MPI, Mike Hill for helpful discussions at the Isaac Newton Institute, and Paul Goerss for helpful discussions about the $I$-completion.

\section{Serre spectral sequence with trivial coefficients} \label{sec:serre-triv}
The main goal of this paper is to calculate $H^n(C_9; \pi_tE_6)$ for $n > 0$ using a Serre
spectral sequence for the extension $C_3 \to C_9 \to C_9/C_3$. In this section
we do a much simpler version of this calculation, where $\pi_*E_6$ is
replaced by the ground ring with trivial action. Experienced readers will
see this as an easy exercise, but we present it here so that we can use it for
comparison: the one family of Serre spectral sequence differentials in the $\pi_* E_6$ case comes
from the trivial module case (Lemma \ref{eq:coh-R}, Lemma \ref{eq:serre-sseq}). We also use this comparison to discuss localizing
the Serre spectral sequence so that it converges to Tate cohomology (see
Proposition \ref{lem:serre-tate}).

Recall that $\HH^i(G;M)=H^i(G;M)$ for $i>0$.

\begin{lemma}\label{lem:margolis}
Let $M$ be an abelian group. Then $H^0(C_n;M)=M^{C_n}$ and $\HH^*(C_n; M)$
is periodic over a generator $b$ of homological degree 2, with 
\begin{align*}
\HH^i(C_n; M) =\begin{cases} M^{C_n}/\Im(\tr) & i\text{ even}\\ \ker(\tr)/\Im(1-\gamma) & i\text{ odd} \end{cases}
\end{align*}
where $\tr:M\to M$ is multiplication by $1+\gamma+\dots+\gamma^{n-1}$ where $\gamma$ is the generator of $C_n$. In particular, if $M$ has trivial action then $H^0(C_n;M)=M$ and
\begin{align*}
\HH^i(C_n; M) =\begin{cases} M/n & i\text{ even}\\
\ker(M\too{\cdot\, n}M) & i\text{ odd}. \end{cases}
\end{align*}
If $M$ has trivial action and is torsion-free, then $\HH^*(C_n;M) =
(M/n)[b^\pm]$.
\end{lemma}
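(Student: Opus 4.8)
The plan is to compute everything from the standard $2$-periodic free resolution of $\mathbb{Z}$ over $\mathbb{Z}[C_n]$. Writing $C_n = \langle \gamma\rangle$ and $N := 1 + \gamma + \dots + \gamma^{n-1} \in \mathbb{Z}[C_n]$ for the norm element (so that $\tr$ is multiplication by $N$), this resolution is
$$ \cdots \xrightarrow{\,\cdot N\,} \mathbb{Z}[C_n] \xrightarrow{\,\cdot(\gamma-1)\,} \mathbb{Z}[C_n] \xrightarrow{\,\cdot N\,} \mathbb{Z}[C_n] \xrightarrow{\,\cdot(\gamma-1)\,} \mathbb{Z}[C_n] \xrightarrow{\ \varepsilon\ } \mathbb{Z} \to 0, $$
whose exactness is the elementary fact that inside the commutative ring $\mathbb{Z}[C_n]$ one has $\ker(\cdot(\gamma-1)) = \mathbb{Z}\cdot N = \Im(\cdot N)$ and $\ker(\cdot N) = I = \Im(\cdot(\gamma-1))$, where $I$ is the augmentation ideal. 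Applying $\operatorname{Hom}_{\mathbb{Z}[C_n]}(-,M)$ and using the identification $\operatorname{Hom}_{\mathbb{Z}[C_n]}(\mathbb{Z}[C_n],M) \isom M$, $f \mapsto f(1)$ — under which the dual of multiplication by $r \in \mathbb{Z}[C_n]$ is again multiplication by $r$ on $M$, since $\mathbb{Z}[C_n]$ is commutative — turns the resolution into the cochain complex
$$ M \xrightarrow{\,\gamma-1\,} M \xrightarrow{\,N\,} M \xrightarrow{\,\gamma-1\,} M \xrightarrow{\,N\,} \cdots. $$
Reading off cohomology gives $H^0(C_n;M) = \ker(\gamma-1) = M^{C_n}$, and then $H^{2i}(C_n;M) = \ker(\gamma-1)/\Im(N) = M^{C_n}/\Im(\tr)$ for $i \geq 1$, $H^{2i+1}(C_n;M) = \ker(N)/\Im(\gamma-1) = \ker(\tr)/\Im(1-\gamma)$ for $i \geq 0$. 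Since $\HH^i = H^i$ for $i > 0$, this already proves the stated formulas in positive degrees, and $\HH^0(C_n;M) = M^{C_n}/\Im(\tr)$ is the cokernel of the norm map by definition of Tate cohomology; only the negative degrees remain.

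For those, I would extend the periodic resolution above to a doubly-infinite exact complex of free $\mathbb{Z}[C_n]$-modules by continuing the alternating pattern $\cdot(\gamma-1)$, $\cdot N$ to the right (the coaugmentation $\mathbb{Z}\to\mathbb{Z}[C_n]$ sending $1 \mapsto N$ makes $\mathbb{Z}[C_n]\xrightarrow{\varepsilon}\mathbb{Z}\to\mathbb{Z}[C_n]$ equal to $\cdot N$, and exactness then forces the same pattern). This is a complete resolution of $\mathbb{Z}$, so its $\operatorname{Hom}_{\mathbb{Z}[C_n]}(-,M)$ complex computes $\HH^*(C_n;M)$; by construction it is the doubly-infinite version of the cochain complex above, which is visibly $2$-periodic. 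The periodicity isomorphism $\HH^i(C_n;M)\isom\HH^{i+2}(C_n;M)$ is cup product with a class $b \in \HH^2(C_n;\mathbb{Z})\isom\mathbb{Z}/n$; since this is an isomorphism in every degree, $b$ is a unit in the graded ring $\HH^*(C_n;\mathbb{Z})$, which is therefore $(\mathbb{Z}/n)[b^\pm]$, and $\HH^*(C_n;M)$ is a graded module over it. Combining $2$-periodicity with the degree $0$ and degree $-1$ computations (the latter being $\ker(\tr)/\Im(1-\gamma)$ via $\HH^{-1}\isom\HH^1$) yields the displayed even/odd description of $\HH^i(C_n;M)$ for all $i$.

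Finally I specialize. If the $C_n$-action on $M$ is trivial, then $\gamma-1 = 0$ and $\tr = \cdot\, n$, so $H^0(C_n;M) = M$ while $\HH^i(C_n;M)$ is $M/n$ for $i$ even and $\ker(M\xrightarrow{\cdot\, n}M)$ for $i$ odd. If in addition $M$ is torsion-free, then $\ker(\cdot\, n) = 0$, so the odd groups vanish, every even group is $M/n$, and $b^{\pm1}$ act by isomorphisms; hence $\HH^*(C_n;M)\isom (M/n)[b^\pm]$ as a graded ring. The argument is entirely standard; the only points needing care are tracking the identification $\operatorname{Hom}_{\mathbb{Z}[C_n]}(\mathbb{Z}[C_n],M)\isom M$ when dualizing, so that the coboundaries come out as multiplication by $\gamma-1$ and by $N$ with no stray inverses, and noting that the $2$-periodicity is implemented by an honest invertible class $b$, so that the last isomorphism is one of graded rings rather than merely a degreewise statement.
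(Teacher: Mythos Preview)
Your proposal is correct and takes essentially the same approach as the paper: both invoke the standard $2$-periodic free resolution of $\mathbb{Z}$ over $\mathbb{Z}[C_n]$ and read off cohomology. The paper's proof is a single sentence pointing to this resolution, whereas you have carefully spelled out the dualization, the complete-resolution extension to negative Tate degrees, and the specializations; the content is the same.
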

\begin{proof}
This is standard; it comes from the periodic resolution
\begin{equation}
\label{eq:margolis} 0\bto M\bto M\tensor_\Z \Z[C_n]\btoo{1-\gamma} M\tensor_\Z
\Z[C_n]\btoo{\tr} \dots.\qedhere
\end{equation}
\end{proof}

Lemma \ref{lem:margolis} directly gives $H^*(C_9; R)$, but we
calculate the Serre spectral sequence so that we can use the differential and
extension for the $\pi_*E_6$ case.
\begin{lemma}\label{lem:serre-triv}
Let $R$ be a torsion-free commutative ring which we give a trivial $C_9$-action. The Serre spectral
sequence
\begin{equation}\label{eq:serre-triv} \SE_2^{p,q}(R) =  H^p(C_9/C_3; H^q(C_3;R))\implies H^{p+q}(C_9; R) \end{equation}
has $E_2$ page
$$ \SE_2^{*,*}(R) \isom \big(R[b_1,b_2]/(3b_1,3b_2)\big)\{ 1, a_2b_1 \} $$
as a module over $H^*(C_9/C_3; R) \isom R[b_2]/(3b_2)$, where $|b_1| =
(0,2)$, $|b_2| = (2,0)$, and $|a_2b_1| = (1,2)$ (which is indecomposable).
There is a differential
$$ d_3(a_2b_1) = b_2^2 $$
and a hidden extension $3\cdot b_1 = b_2$.
\end{lemma}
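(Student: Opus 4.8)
The plan is to run the Lyndon--Hochschild--Serre spectral sequence \eqref{eq:serre-triv} directly, feeding in Lemma \ref{lem:margolis} at both the fiber and the base, and then to extract the single differential and the hidden extension by comparing with the value of $H^*(C_9;R)$ that Lemma \ref{lem:margolis} already supplies.

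First I would identify the $E_2$ page. Since $C_9$ is abelian the conjugation action of $Q:=C_9/C_3$ on $C_3$ is trivial, and as $R$ carries the trivial action the resulting $Q$-action on $H^q(C_3;R)$ is trivial. By Lemma \ref{lem:margolis} (torsion-free case) $H^*(C_3;R)\isom R[b_1]/(3b_1)$ with $|b_1|=2$, concentrated in even degrees, so the odd rows of $\SE_2^{p,q}=H^p(Q;H^q(C_3;R))$ vanish. The bottom row $q=0$ is $H^*(Q;R)\isom R[b_2]/(3b_2)$, $|b_2|=(2,0)$; and for $i\ge 1$ the row $q=2i$ is $H^*(Q;(R/3)b_1^i)$, which since $R/3$ is an $\F_3$-algebra equals $((R/3)[b_2]\otimes\Lambda_{R/3}[a_2])\cdot b_1^i$ with $|a_2|=(1,0)$. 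The element $a_2$ by itself is not a class of $\SE_2$ because $H^1(Q;R)=0$, but $a_2b_1^i$ is; collecting the rows yields the stated $E_2$ page, with $a_2b_1\in\SE_2^{1,2}$ indecomposable.

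Next, $d_2=0$ because it lowers the (even) fiber degree by one into a vanishing row, so $E_3=E_2$. The classes $b_1$ and $b_2$ are permanent cycles ($b_2$ sits in the bottom row, and the only possible target of $d_r(b_1)$ is $\SE_3^{3,0}=H^3(C_3;R)=0$). The class $a_2b_1\in\SE_2^{1,2}$, by contrast, cannot survive: nothing can hit it, since any $d_r$ landing in column $1$ originates in column $1-r<0$, whereas $H^3(C_9;R)=0$ by Lemma \ref{lem:margolis}. It must therefore support a differential, and for degree reasons the only option is $d_3(a_2b_1)\in\SE_3^{4,0}=(R/3)b_2^2$, which is consequently a generator; rescaling, $d_3(a_2b_1)=b_2^2$. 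The Leibniz rule then determines $d_3$ everywhere: $d_3(b_1^ib_2^j)=0$ and $d_3(a_2b_1^{i+1}b_2^j)=b_1^ib_2^{j+2}$. Because multiplication by $b_2^2$ is injective on the copies of $R/3$ occurring here, $d_3$ is injective on every odd column and hits all of every even column $p\ge 4$, so $E_4=\SE_\infty$ is concentrated in columns $p=0,2$: $\SE_\infty^{0,*}=R\{1\}\oplus\bigoplus_{i\ge1}(R/3)b_1^i$ and $\SE_\infty^{2,*}=\bigoplus_{i\ge0}(R/3)b_1^ib_2$. Every $d_r$ with $r\ge 4$ shifts the column by more than $2$, so the sequence collapses at $E_4$.

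Finally I would read off the extension from $\SE_\infty$. In total degree $2k$ ($k\ge1$) the spectral sequence gives a short exact sequence $0\to(R/3)b_1^{k-1}b_2\to H^{2k}(C_9;R)\to(R/3)b_1^k\to 0$, and Lemma \ref{lem:margolis} identifies $H^{2k}(C_9;R)=R/9$, forcing this to be the non-split extension $0\to R/3\to R/9\to R/3\to 0$. Spelled out in degree $2$: the restriction edge map $H^2(C_9;R)\to H^2(C_3;R)=\SE_\infty^{0,2}$ carries a generator of $R/9$ to $b_1$, and $3$ times that generator lies in $F^2H^2(C_9;R)=\SE_\infty^{2,0}$ where it agrees with the inflation of $b_2$, i.e.\ $3b_1=b_2$ in $H^2(C_9;R)$ after lifting $b_1$ along the edge map. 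The only step here that is not pure bookkeeping is pinning down $d_3(a_2b_1)$: since $H^1(C_3;R)=0$ there is no honest transgressing class ``$a_2$'', so this differential is not visible from the ring structure on $\SE_2$ and must be deduced indirectly from $H^3(C_9;R)=0$, and likewise the hidden extension is extracted from, rather than used to prove, the known value of $H^*(C_9;R)$; everything else is a routine chase.
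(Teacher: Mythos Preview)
Your argument is correct and follows the same approach as the paper: compute $H^*(C_9;R)$ directly from Lemma \ref{lem:margolis} and deduce both the $d_3$ and the hidden extension by comparison. Your write-up is in fact more explicit than the paper's, which simply asserts that the displayed pattern is ``the only possible differential pattern that produces the correct $C_9$-cohomology''; the one cosmetic slip is writing $\SE_3^{3,0}=H^3(C_3;R)$ where $H^3(C_9/C_3;R)$ is meant (harmless since $C_9/C_3\cong C_3$).
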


\begin{proof}
Let $k = R/3$. Since $R$ is torsion-free, Lemma \ref{lem:margolis} gives
\begin{align}
\label{eq:coh-R}\HH^*(C_9; R) & = (R/9)[b^\pm]
\\\notag\HH^*(C_9; k) & = k[a,b^\pm]/(a^2)
\\\notag \HH^*(C_3;R) & = k[b^\pm]
\\\notag \HH^*(C_3;k) & = k[a,b^\pm]/(a^2)
\\\notag H^0(C_9;R) & = R
\end{align}
with $|a| = 1$ and $|b| = 2$.

In $\SE_2^{*,*}(R)$,
write $b_1$ for the generator of the inner cohomology $\hat{H}^*(C_3; R)$ and
$a_2$ (if applicable) and $b_2$ for the generators of $H^*(C_9/C_3; M)$ for $M=R$ or $k$. The Serre $E_2$ page is shown below, along with differentials which we explain below.

\begin{figure}[H]
\centerline{\includegraphics[width=180pt]{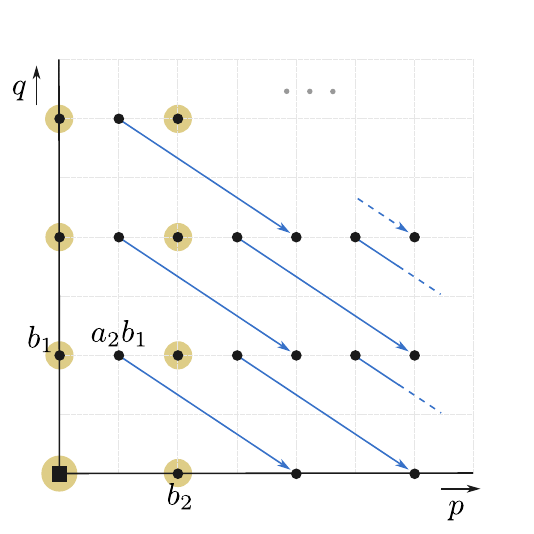}}
\caption{The Serre spectral sequence \eqref{eq:serre-triv}. Dots represent copies of $k$, and the square represents a copy of $R$. Surviving classes are highlighted.}
\label{fig:serre-triv}
\end{figure}

The only possible differential pattern that produces the correct
$C_9$-cohomology (as shown in \eqref{eq:coh-R}) is the one given. For degree
reasons, no higher differentials are possible.
The hidden extension is forced by the fact that $H^2(C_9;R)=\HH^2(C_9;R)=R/9$.
\end{proof}

\begin{remark}
One might expect there to be a convergent spectral sequence
$$ \HH^p(C_9/C_3; \HH^q(C_3;R))\implies \HH^{p+q}(C_9; R), $$
which corresponds to inverting $b_1$ and $b_2$ in the calculation above. But $E_4 = 0$ after inverting $b_1$ and $b_2$, so the spectral sequence does not converge.
\end{remark}

However, if one just inverts $b_1$, then the spectral sequence does converge. This occurs in general.
\begin{proposition}\label{lem:serre-tate}
Let $M$ be an $R[C_9]$-module.
There is a spectral sequence
\begin{equation}\label{eq:serre-M} \SE_2^{p,q}(M) = H^p(C_9/C_3; \hat{H}^q(C_3; M)) \implies \hat{H}^{p+q}(C_9;M).
\end{equation}
that converges if $\SE_r^{*,*}(M)$ for some $r$ is concentrated in finitely many $p$-degrees.
\end{proposition}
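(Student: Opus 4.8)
\emph{Proof strategy.} The plan is to obtain \eqref{eq:serre-M} by inverting the cyclic periodicity class in the ordinary Lyndon--Hochschild--Serre spectral sequence, and to identify the resulting $E_2$-page and abutment with Tate cohomology. Let $b$ generate $H^2(C_9;\Z)\cong\Z/9$; since the restriction $H^2(C_9;\Z)\to H^2(C_3;\Z)$ is the natural surjection $\Z/9\to\Z/3$, in the first-quadrant (hence strongly convergent) LHS spectral sequence $E_2^{p,q}=H^p(C_9/C_3;H^q(C_3;M))\Rightarrow H^{p+q}(C_9;M)$ the class $b$ lies in filtration $0$ with leading term the inner periodicity class $b_1\in E_2^{0,2}$. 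In particular $b_1$ is a permanent cycle, and --- the $M$-spectral sequence being a module over the $\Z$-one --- cup product with $b$ is a self-map of the whole spectral sequence of bidegree $(0,2)$ which on $E_2$ is the map induced by $\cup b_1\colon H^q(C_3;M)\to H^{q+2}(C_3;M)$.

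Now I would invert $b$. Since localization is exact, $\SE_r^{p,q}(M):=\operatorname{colim}\bigl(E_r^{p,q}\xrightarrow{\cup b}E_r^{p,q+2}\to\cdots\bigr)$ is again a spectral sequence, with abutment $\operatorname{colim}\bigl(H^\ast(C_9;M)\xrightarrow{\cup b}H^{\ast+2}(C_9;M)\to\cdots\bigr)$. The $2$-periodicity of cyclic-group cohomology (Lemma \ref{lem:margolis}) identifies both ends. Applied to $C_3$ it gives $\operatorname{colim}_k H^{q+2k}(C_3;M)=\hat{H}^q(C_3;M)$, so --- using that $H^p(C_9/C_3;-)$ commutes with filtered colimits of coefficients, being computed from the finite free periodic resolution of $\Z$ over $\Z[C_3]$ --- one gets $\SE_2^{p,q}(M)=H^p(C_9/C_3;\hat{H}^q(C_3;M))$; for $q<0$ the terms $E_2^{p,q}$ vanish, so the colimit is read off a cofinal tail and still equals $\hat{H}^q(C_3;M)$ by periodicity. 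Applied to $C_9$ it gives the abutment: $\cup b$ is an isomorphism on $H^{\geq1}(C_9;M)\cong\hat{H}^{\geq1}(C_9;M)$, while at the bottom $H^0(C_9;M)\to H^2(C_9;M)$ is the quotient onto $\hat{H}^0\cong\hat{H}^2$, so the colimit is $\hat{H}^\ast(C_9;M)$.

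The one delicate point --- and the reason for the finiteness hypothesis, since as the Remark shows inverting $b_2$ as well would destroy convergence --- is that a formal localization of a convergent spectral sequence need not itself converge. I would handle this by realizing $\SE_\bullet(M)$ on the spectrum level: it is (a reindexing of) the homotopy fixed point spectral sequence for the residual $C_9/C_3$-action on the Tate spectrum $HM^{tC_3}$, which conditionally converges to $\pi_{-\ast}\bigl((HM^{tC_3})^{hC_9/C_3}\bigr)\cong\pi_{-\ast}(HM^{tC_9})=\hat{H}^\ast(C_9;M)$ --- here the equivalence $(HM^{tC_3})^{hC_9/C_3}\simeq HM^{tC_9}$ is a form of the Tate orbit lemma, valid because $HM$ is bounded below. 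If now $\SE_{r_0}^{\ast,\ast}(M)$ is concentrated in finitely many $p$-degrees, then since $d_r$ has bidegree $(r,1-r)$ all $d_r$ on $\SE_r$ vanish for $r$ large, so $\SE_\infty$ is reached at a finite page, $\lim^1_r\SE_r=0$, and the filtration on the abutment is finite in each total degree; combined with conditional convergence this gives strong convergence. Thus the substantive work is the $E_2$-page and abutment identifications via cyclic periodicity, and the main obstacle is this last step: promoting the formal localization to a genuinely convergent spectral sequence, for which the homotopy-fixed-point model together with the finiteness hypothesis is what makes it work.
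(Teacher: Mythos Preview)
Your construction of the spectral sequence and identification of the $E_2$-page and abutment is essentially the same as the paper's: both invert the periodicity class $b$ in the Lyndon--Hochschild--Serre spectral sequence, use that Tate cohomology of a cyclic group is the $b$-localization of ordinary cohomology, and observe that this localization commutes with $H^p(C_9/C_3;-)$.

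The genuine difference is in the convergence argument. The paper argues directly with the localized spectral sequence, following \cite[Proposition~4.4.1]{eva-thesis}: it isolates the two ways convergence of $b^{-1}E_r \Rightarrow b^{-1}H^*$ can fail (an infinite chain of hidden $b$-extensions splitting a $b$-tower across filtrations, or a $b$-tower supporting differentials of unbounded length hitting $b$-torsion) and checks that either scenario forces elements in unbounded $p$-degree, contradicting the hypothesis. You instead sidestep the localization issue by replacing the formal colimit with an honest spectrum-level model: the homotopy fixed point spectral sequence for $C_9/C_3$ acting on $HM^{tC_3}$ has the same $E_2$-page, is conditionally convergent by Boardman, and converges to $\hat{H}^*(C_9;M)$ via the Tate orbit lemma $(HM^{tC_3})^{hC_9/C_3}\simeq HM^{tC_9}$ (valid since $HM$ is bounded below); the finiteness hypothesis then upgrades conditional to strong convergence in the standard way. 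Your route is cleaner conceptually and avoids the ad hoc case analysis, at the cost of importing the Tate orbit lemma and the Nikolaus--Scholze machinery; the paper's argument is more self-contained and elementary. One small point: you switch from the localized LHS spectral sequence to the hfpss model without explicitly identifying the two as the \emph{same} spectral sequence---but since you only need \emph{some} spectral sequence with the stated $E_2$-page and abutment, it would be tidier to simply take the hfpss as the definition from the outset and drop the localization paragraph.
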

\begin{proof}
Since $M$ is $C_9$-equivariantly an $R$-module, $H^*(C_9; M)$ is an $H^*(C_9; R)$-module and $H^*(C_3; M)$ is an $H^*(C_3;R)$-module. The periodicity operator on $H^*(C_3; M)$ comes from the action of the periodicity element $b\in H^2(C_3; R)$ which converges to the periodicity operator $b\in H^*(C_9; R)$.
Thus the periodicity class for $H^*(C_9, M)$ is detected by the periodicity
class for $H^*(C_3, M)$.
Now use the fact that Tate cohomology for a cyclic group is obtained by inverting the periodicity class $b$, and this localization commutes with $H^*(C_9/C_3, -)$.

For convergence, we follow \cite[Proposition 4.4.1 proof]{eva-thesis}, which says there are two things that can go wrong with convergence of a localized spectral sequence $b^{-1}E_r \Rightarrow b^{-1}H^*$: (1) a $b$-tower of permanent cycles is not in $b^{-1}E_\infty$ because the tower is split into infinitely many pieces in the spectral sequence, connected by hidden $b$-multiplications; (2) a $b$-periodic tower supports a differential to an infinite sequence of torsion elements, and hence this differential is not recorded in $b^{-1}E_r$.

\begin{figure}[H]
\centerline{\includegraphics[width=140pt]{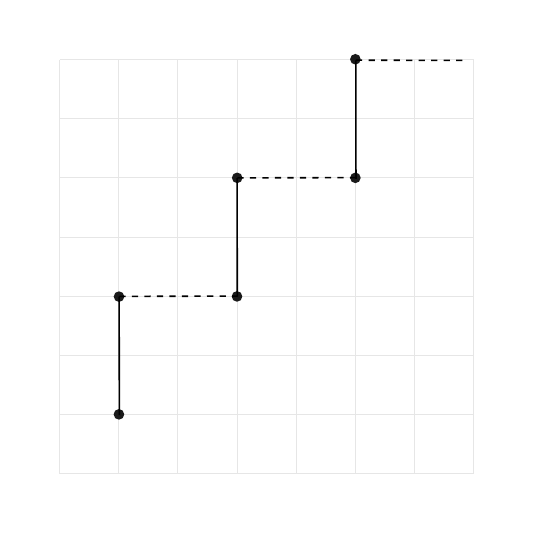} \hspace{30pt}
\includegraphics[width=140pt]{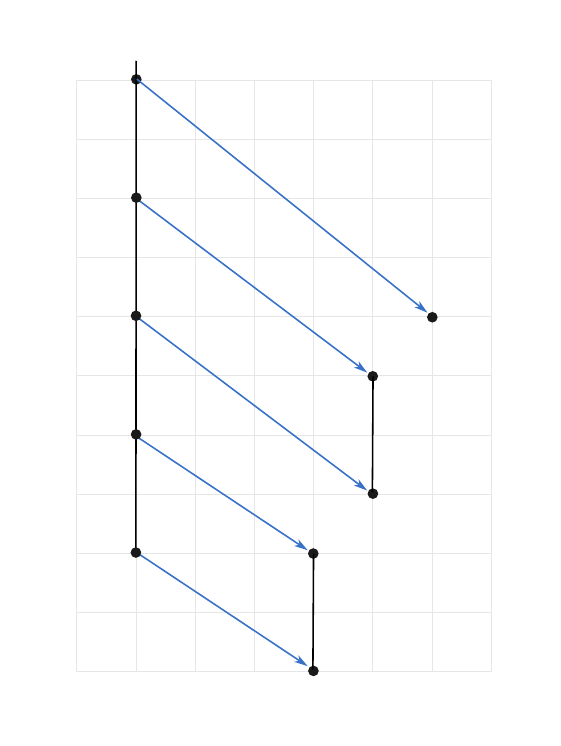}}
\caption{Examples illustrating the two possible issues with convergence. Dotted lines in the first picture indicate hidden extensions by $b$. Blue arrows in the second picture indicate differentials.}
\end{figure}

Since $|b| = (0,2)$, a hidden $b$-extension increases degree by $(n, 2-n)$ where $n$ is the filtration jump. So an infinite tower of $b$-extensions would have unbounded $p$-degree. The second scenario requires a sequence of differentials of increasing length with source a $b$-tower $\{ xb^i \}$. Since differentials increase degree by $(p+r, q-r+1)$, this also requires a sequence of elements in arbitrarily large $p$-degrees.
\end{proof}

\section{Serre $E_2^{p,q}$ for $p > 0$}\label{sec:p>0}
The overall goal is to compute $\hat{H}^*(C_9; (M[\Delta^{-1}])^\hhat_I)$
where $$ M = \Sym(\Ind^{C_9}_{C_3}\bar{\rho}), $$
but we postpone the consideration of the completion and $\Delta$-localization
until Section \ref{sec:completion}.
Following Lemma \ref{lem:serre-tate}, consider the Tate cohomology Serre spectral sequence
\begin{equation}\label{eq:serre} E_2^{p,q} = H^p(C_9/C_3; \hat{H}^q(C_3; M)) \implies
\hat{H}^{p+q}(C_9; M). \end{equation}
We will show convergence in the proof of Theorem \ref{eq:E2-sec5}.
In this section, we discuss the structure of $\hat{H}^*(C_3; M)$ as a
$C_9/C_3$-module and determine $E_2^{p,q}$ for $p>0$
(Proposition \ref{prop:p>0}) by replacing $H^p$ with $\hat{H}^p$.
The $p=0$ part is more complicated, and
will be treated in Section \ref{sec:c1-d1}.
We start by rewriting Conjecture \ref{conj:action} in a more convenient form.

%\begin{lemma}\label{lem:ker-to-coker}
%Let $R$ be a commutative ring and let $\bar{\rho} = \ker(R[C_{p^k}]\to R[C_{p^k}/C_p])$.
%There is an $R[C_{p^k}]$-module isomorphism
%$$ \bar{\rho} \isom \coker\big(\phi:R[C_{p^{k-1}}]\to R[C_{p^k}] \big) $$
%where the map $\phi:R[C_{p^k}/C_p]\to R[C_{p^k}]$ is defined by
%$$ \phi([1])=[1]+[\gamma^{p^{k-1}}]+\dots+[\gamma^{p^k-p^{k-1}}]. $$
%\end{lemma}
%\begin{proof}
%First note that $\coker(\phi)$ is a free $R$-module with basis $\{ s_i = [\gamma^i] \st 0 \leq i < p^k - p^{k-1} \}$, as each of the relations $\phi([\gamma^i]) = 0$ can be solved for the highest-index $[\gamma^j]$. This has the property that $\gamma(s_i) = s_{i+1}$ if $i+1 < p^k - p^{k-1}$, and $\gamma(s_{p^k-p^{k-1}-1}) = -(s_0 + s_{p^{k-1}} + \dots + s_{p^k-2p^{k-1}})$ since $\gamma\cdot [\gamma^{p^k-p^{k-1}-1}]=[\gamma^{p^k-p^{k-1}}]$ and $\phi([1])=0$.
%We can take
%$$ \{ t_i = [\gamma^i] - [\gamma^{p^{k-1}+i}] \st 0\leq i < p^k-p^{k-1}\} $$
%as a basis for $\bar{\rho}$. This has the same $R[C_{p^k}]$-module structure: we have $\gamma(t_i)=t_{i+1}$ for $i+1 < p^k-p^{k-1}$ and 
%$$ \gamma(t_{p^k-p^{k-1}-1})= [\gamma^{p^k-p^{k-1}}] - [\gamma^{p^k}] = [\gamma^{p^k-p^{k-1}}] - [1] = -(t_0+\dots+t_{p^k-2p^{k-1}}). $$
%So the map $\coker(\phi)\to \bar{\rho}$ sending $s_i\mapsto t_i$ is a $R[C_{p^k}]$-map.
%\end{proof}

\begin{custom}{Notation}
For the rest of the paper, let $\k = \F_{3^6}$ and $\W = \W(\k)$.
\end{custom}

\begin{corollary} \label{cor:conjecture}
Conjecture \ref{conj:action} implies that
there is a $\W[C_9]$-module isomorphism
$$ \pi_* E_6 \isom (\Sym(\Ind_{C_3}^{C_9}\bar{\rho})[\Delta^{-1}])^\hhat_I $$
where
$$ \Sym(\Ind_{C_3}^{C_9}\bar{\rho}) \isom
\W[x_0,\dots,x_8]\Big/\left(\atop{x_0+x_3+x_6,\\x_1+x_4+x_7,\\x_2+x_5+x_8}\right), $$
$I = (3,x_0-x_1,\dots,x_7-x_8)$, $\Delta = \prod_{i=0}^8 x_i$, and the
generator $\gamma\in C_9$ acts by $\gamma(x_i)=x_{i+1}$ (where subscripts are
taken mod 9).
\end{corollary}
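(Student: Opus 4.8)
The plan is simply to unwind Conjecture \ref{conj:action} in the case $p=3$, $k=2$. First one checks the base bookkeeping: $p^{k-1}(p-1)=6$, so $\W(\F_{p^{p^{k-1}(p-1)}})=\W(\F_{3^6})=\W$, while $C_{p^k}=C_9$ and $C_p=C_3$; the $I$-completion and the inversion of $\Delta$ appear in exactly the asserted form, so the whole content of the corollary is to rewrite $\Sym(\Ind_{C_3}^{C_9}\bar\rho)$, the ideal $I$, and the element $\Delta$ in terms of explicit polynomial generators.

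Next I would make $\bar\rho$ and the induced module explicit. By definition $\bar\rho=\W[C_3]\big/\bigl(\W\cdot([1]+[\sigma]+[\sigma^2])\bigr)$ is free of rank $2$ over $\W$, say on the images of $[1]$ and $[\sigma]$ with $[\sigma^2]=-[1]-[\sigma]$, concentrated in degree $-2$. Taking $C_3=\an{\gamma^3}\subseteq C_9=\an{\gamma}$ (so that $\sigma$ corresponds to $\gamma^3$) and coset representatives $1,\gamma,\gamma^2$, we get $\Ind_{C_3}^{C_9}\bar\rho=\bigoplus_{j=0}^{2}\gamma^j\tensor_{\W[C_3]}\bar\rho$, free of rank $6$ over $\W$ and concentrated in degree $-2$ (note $6=9-3$, the number of polynomial generators minus the number of linear relations, as a consistency check). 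Setting $x_0=1\tensor[1]$ and $x_i:=\gamma^i x_0$, one computes $x_3=1\tensor[\sigma]$ and $x_6=1\tensor[\sigma^2]$, so the defining relation of $\bar\rho$ becomes $x_0+x_3+x_6=0$; applying $\gamma$ and $\gamma^2$ yields $x_1+x_4+x_7=0$ and $x_2+x_5+x_8=0$. These three relations, together with the fact that $x_0,\dots,x_5$ form a $\W$-basis, present $\Ind_{C_3}^{C_9}\bar\rho$, and the action satisfies $\gamma(x_i)=x_{i+1}$ with indices taken mod $9$: this is immediate for $i\le 7$, and for $i=8$ one checks $\gamma(x_8)=\gamma(-x_2-x_5)=-x_3-x_6=x_0$.

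Finally, $\Sym$ sends a quotient of a finite free $\W$-module by a submodule spanned by degree-$(-2)$ (hence even, so polynomial, with no exterior part) elements to the polynomial $\W$-algebra on those generators modulo the same linear relations, giving
$$\Sym(\Ind_{C_3}^{C_9}\bar\rho)\isom \W[x_0,\dots,x_8]\big/(x_0+x_3+x_6,\ x_1+x_4+x_7,\ x_2+x_5+x_8)$$
with $\gamma(x_i)=x_{i+1\bmod 9}$. Then $\Delta=\prod_{i=0}^{8}\gamma^i x_0=\prod_{i=0}^{8}x_i$, and $I=(3,\ x_0-\gamma x_0,\ \dots,\ \gamma^8 x_0-\gamma^7 x_0)=(3,\ x_0-x_1,\ x_1-x_2,\ \dots,\ x_7-x_8)$, where the last equality holds because the omitted difference $x_8-x_0$ equals $-\sum_{i=0}^{7}(x_i-x_{i+1})$ and so already lies in the ideal generated by the consecutive differences. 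The argument is pure bookkeeping; the only place that calls for a bit of care is tracking the defining relation of $\bar\rho$ through the induction and confirming $C_9$-equivariance of the resulting presentation, so the ``main obstacle'' here is notational rather than mathematical.
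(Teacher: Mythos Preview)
Your argument is correct and follows essentially the same route as the paper: set $x_i=\gamma^i x_0$ with $\sigma=\gamma^3$, read off the three linear relations $x_{j}+x_{j+3}+x_{j+6}=0$ from the defining relation of $\bar\rho$, and pass to $\Sym$ to obtain the polynomial presentation; the paper's proof is a terser version of exactly this bookkeeping. Your extra checks (that $\gamma(x_8)=x_0$ and that $x_8-x_0$ already lies in the ideal of consecutive differences) are correct sanity checks that the paper omits.
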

\begin{proof}
Comparing to the notation in Conjecture \ref{conj:action}, here we let $x_i =
\gamma^i x_0$ where $C_9 = \an{\gamma}$, and observe that $\sigma = \gamma^3$.
Let $y_0 = x_0+x_3+x_6$, $y_1=x_1+x_4+x_7$, and $y_2 = x_2+x_5+x_8$.
Since $\bar{\rho} \isom \W\{ x_0,x_3,x_6 \}/y_0\W\{ x_0,x_3,x_6 \}$, we have 
$$ \Ind_{C_3}^{C_9} \bar{\rho} \isom \coker(\W\{ y_0,y_1,y_2 \} \to \W\{ x_0,\dots,x_8 \})$$
where $\gamma$ cyclically permutes
the generators $x_i$ and $C_3 = \an{\gamma^3} \subseteq C_9$. The kernel of
the quotient
$\W[ x_0,\dots,x_8] \to \Sym(\Ind^{C_9}_{C_3}\bar{\rho})$ is the ideal
$(y_0,y_1,y_2)$ of $\W[x_0,\dots,x_8]$.
\end{proof}
 
In order to analyze $\hat{H}^*(C_3; M)$ in \eqref{eq:serre}, we will need the following two
lemmas.

\begin{lemma}\label{lem:C3-kunneth}
Let $M_1$ and $M_2$ be $R[C_3]$-modules for $R = \k$ or $\W$. If $R = \W$,
assume that $M_1$ and $M_2$ are torsion-free. Then
$$ \hat{H}^*(C_3; M_1\tensor M_2) \isom \hat{H}^*(C_3;
M_1)\tensor_{\hat{H}^*(C_3;k)} \hat{H}^*(C_3; M_2). $$
\end{lemma}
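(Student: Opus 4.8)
The plan is to exploit the fact that $\hat{H}^*(C_3;-)$ is computed by the $2$-periodic complete resolution of $\Z[C_3]$, or equivalently (after base change) the periodic resolution \eqref{eq:margolis} over $R[C_3]$, and that this resolution is a complex of \emph{free} $R[C_3]$-modules in each degree. First I would reduce to the case $R=\k$: if $R=\W$ and $M_1,M_2$ are torsion-free, then $M_1\tensor_\W M_2$ is torsion-free, Lemma~\ref{lem:margolis} describes all three Tate cohomologies as $\k$-modules concentrated in the obvious degrees, and the periodicity element $b$ is an isomorphism in positive even degrees, so the statement over $\W$ follows formally from the statement over $\k=\W/3$ once one checks the comparison map is an iso in degrees $0,1$ directly (here one uses torsion-freeness so that $\hat H^0(C_3;M_i) = M_i^{C_3}/\tr$ and $\hat H^1(C_3;M_i)=\ker(\tr)/\Im(1-\gamma)$ behave well under tensor; alternatively just reduce mod $3$ throughout since everything in sight is $3$-torsion in nonzero degrees and $b$-periodic).

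For the case $R=\k$: since $\k[C_3]=\k[\sigma]/(\sigma^3-1)$ and over a field of characteristic $3$ we have $\sigma^3-1=(\sigma-1)^3$, so $\k[C_3]\isom \k[t]/t^3$ is a local self-injective (Frobenius) algebra. The key structural input is that for such an algebra, $\hat{H}^*(C_3;N)$ depends only on the stable isomorphism class of $N$ in the stable module category, and every finitely generated module splits as (free) $\oplus$ (the non-free indecomposables $\k[t]/t$ and $\k[t]/t^2$). The Tate cohomology of a free module vanishes, so I would: (1) reduce to $M_1,M_2$ having no free summands; (2) compute $\hat H^*(C_3;\k[t]/t^j)$ for $j=1,2$ explicitly from \eqref{eq:margolis} — this gives $\k[b^\pm]$ in each case, with the generator in degree $0$ for one parity — and check the tensor products $\k[t]/t^i \tensor_\k \k[t]/t^j$ decompose in the stable category in a way compatible with the claimed Künneth iso (e.g. $\k[t]/t\tensor \k[t]/t \isom \k[t]/t \oplus \k[t]/t \oplus \text{(free)}$ when... — in any case one tabulates the finitely many cases); (3) observe the comparison map $\hat H^*(C_3;M_1)\tensor_{\hat H^*(C_3;\k)}\hat H^*(C_3;M_2)\to \hat H^*(C_3;M_1\tensor M_2)$ is induced by the diagonal and is an iso on each of these finitely many pieces, hence in general by additivity. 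Since $\hat H^*(C_3;\k)=\k[a,b^\pm]/a^2$ and tensoring over it is exact on the relevant modules (they are free or the "half" $\k[b^\pm]$), no $\mathrm{Tor}$ terms appear, which is why the Künneth formula has no correction term.

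The main obstacle I anticipate is \emph{infinite generation}: the modules $M_i$ here (namely $M=\Sym(\Ind^{C_9}_{C_3}\bar\rho)$ and its graded pieces) are infinitely generated over $\k$ in general, even if each graded piece is finite-dimensional, and the stable-module-category decomposition argument is cleanest for finitely generated modules. I would handle this by working one internal degree at a time (each $M_t$ is finite-dimensional over $\k$), or by noting that $\bar\rho$ and hence $M$ is a filtered colimit / direct sum of finitely generated $C_3$-modules and that both sides of the Künneth map commute with filtered colimits of torsion-free modules — Tate cohomology of $C_3$ commutes with direct sums since $C_3$ is finite and the complete resolution is finite type. A secondary technical point is making sure the $\hat H^*(C_3;\k)$-module structure on the left-hand tensor product is the right one (via the ring map $\k\to R$ and cup product), but this is straightforward naturality.
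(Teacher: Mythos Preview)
Your approach over $\k$ is in the same spirit as the paper's---both classify indecomposable $\k[C_3]$-modules and pass to the stable module category---but the paper's version is shorter and sidesteps the case tabulation. The observation you are circling but never quite state is that the short exact sequence $0\to R\{1+\gamma+\gamma^2\}\to R[C_3]\to R^\sigma\to 0$ identifies the $2$-dimensional indecomposable $R^\sigma$ with $\Sigma R$ in the stable module category. Once you know this, every non-free indecomposable is a suspension of the trivial module, $M_1\tensor M_2$ is stably a direct sum of shifts of $R$, and the K\"unneth map is just the tautology $\hat H^*(C_3;\Sigma^i R)\tensor_{\hat H^*(C_3;R)}\hat H^*(C_3;\Sigma^j R)\isom \hat H^*(C_3;\Sigma^{i+j}R)$ together with additivity. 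No tabulation of $\k[t]/t^i\tensor_\k\k[t]/t^j$ is required. (Two small slips along the way: your sample decomposition of $\k[t]/t\tensor\k[t]/t$ is off---this is the $1$-dimensional trivial module---and $\hat H^*(C_3;\k)$ is $\k[a,b^\pm]/(a^2)$, not $\k[b^\pm]$; the latter is the answer over $\W$.)

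Your reduction from $\W$ to $\k$, however, has a genuine gap. Reducing the coefficient module mod $3$ does not preserve Tate cohomology: already $\hat H^*(C_3;\W)\isom\k[b^\pm]$ while $\hat H^*(C_3;\k)\isom\k[a,b^\pm]/(a^2)$, so ``reduce mod $3$ throughout'' changes both sides of the putative isomorphism in incompatible ways. Your alternative---check the K\"unneth map directly in degrees $0,1$ using the explicit formulas $M^{C_3}/\tr$ and $\ker(\tr)/\Im(1-\gamma)$---is not a reduction at all; it is the full statement restated. The paper instead handles $R=\W$ in parallel with $R=\k$ by citing Reiner's classification of torsion-free $\W[C_p]$-lattices: the indecomposables are again exactly the trivial, free, and $R^\sigma$ types, so the same $\Sigma$-argument goes through verbatim over $\W$ with no reduction step. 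Your concern about infinite generation is handled in the paper simply by the remark that group cohomology of finite groups commutes with arbitrary direct sums.
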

\begin{proof}
	Since $\k$ has characteristic 3,
the indecomposable $\k[C_3]$-modules are the trivial module $\k$, the free
module $\k[C_3]$, and the indecomposable 2-dimensional module $\k^\sigma = \k\{ 1,\gamma,\gamma^2 \}/(1+\gamma+\gamma^2)$.
In the case $R = \W$, the indecomposable \emph{torsion-free} modules have the
same form (trivial, free, or $R^\sigma = R\{ 1,\gamma,\gamma^2 \}/(1+\gamma+\gamma^2)$) \cite{reiner}.
In both cases, short
exact sequence $0\to R\{ 1+\gamma+\gamma^2 \} \to R[C_3]\to R^\sigma\to 0$
shows that $R^\sigma \isom \Sigma R$ in the stable module category. So $M_1\tensor
M_2$ decomposes as a direct sum of shifts of the trivial module, and we can use the fact that group cohomology of finite groups commutes with arbitrary direct sums.
\end{proof}
%reference: \url{https://mathoverflow.net/questions/120808}
\begin{remark}
This is not true if Tate cohomology is replaced by ordinary cohomology; for
example, take $M_1 = M_2 = \k[C_3]$.
\end{remark}

\begin{lemma}\label{lem:coh-rhobar}
Suppose $R$ is a ring with no 3-power torsion and suppose $C_3$ acts on $R[t_0,t_1,t_2]$ by permuting the
polynomial generators. Then
\begin{align*}
\hat{H}^*(C_3; R[ t_0,t_1,t_2])  & \isom (R/3)[b^\pm, d]
\\\hat{H}^*(C_3; R[ t_0,t_1,t_2]/(t_0+t_1+t_2))  & \isom (R/3)[b^\pm,d]
\tensor_R \Lambda_R[c]
\end{align*}
where $d = t_0t_1t_2$ has degree $(0,3n)$ in the $(s,t)$ grading, and $|c| =
(1,n)$ where $|t_i| = n$.
\end{lemma}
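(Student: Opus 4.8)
The plan is to read off both Tate cohomology rings from the permutation structure of the monomial basis of the polynomial ring, together with Lemma \ref{lem:margolis}. Write $V = R\{t_0,t_1,t_2\}$ for the regular (free rank one) $R[C_3]$-module, so $R[t_0,t_1,t_2] = \Sym V$, and set $d := t_0t_1t_2$. The key point is that $C_3$ permutes the monomial $R$-basis of $\Sym^k V$ in orbits of size $1$ or $3$: a size-$3$ orbit spans a free $R[C_3]$-summand, while a size-$1$ orbit consists of a single shift-invariant monomial, necessarily $d^{k/3}$, spanning a trivial summand. Thus, as an $R[C_3]$-module, $\Sym V$ is a direct sum of free modules together with the trivial submodules $R\{d^a\}$, $a\ge 0$. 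First I would apply $\hat H^*(C_3;-)$: it kills free modules, commutes with arbitrary direct sums (as for any finite group), and $\hat H^*(C_3;R) = (R/3)[b^\pm]$ by Lemma \ref{lem:margolis}, where the absence of $3$-power torsion forces $\hat H^{\mathrm{odd}}(C_3;R) = 0$. This yields $\hat H^*(C_3;\Sym V) \isom \bigoplus_{a\ge 0}(R/3)[b^\pm]\{d^a\}$ additively; since the cup product of two degree-$0$ classes is the product of the representing invariant elements, $d^a d^{a'} = d^{a+a'}$, and $b$ is a central unit, this is the graded ring $(R/3)[b^\pm,d]$ with $|d| = (0,3n)$ and $|b| = (2,0)$.

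For the reduced version I would use the short exact sequence of $R[C_3]$-modules
$$ 0 \longrightarrow \Sym V \xrightarrow{\ \cdot e_1\ } \Sym V \longrightarrow \Sym V/(e_1) \longrightarrow 0, \qquad e_1 := t_0+t_1+t_2, $$
where $\cdot e_1$ is injective ($e_1$ has a unit coefficient, hence is a nonzerodivisor), raises internal degree by $n$, and is $C_3$-equivariant since $e_1$ is invariant; in fact this is a sequence of $\Sym V$-modules. Now apply $\hat H^*(C_3;-)$. Since the invariant element $e_1 = \tr(t_0)$ lies in the transfer ideal, its class in $\hat H^0(C_3;\Sym V) = (\Sym V)^{C_3}/\Im(\tr)$ is zero; as multiplication by $e_1$ on $\hat H^*(C_3;\Sym V)$ is cup product with that class, it is zero, and the long exact sequence breaks into short exact sequences of $(R/3)[b^\pm,d]$-modules
$$ 0 \longrightarrow \hat H^q(C_3;\Sym V) \longrightarrow \hat H^q(C_3;\Sym V/(e_1)) \xrightarrow{\ \partial\ } \hat H^{q+1}(C_3;\Sym V)\{n\} \longrightarrow 0, $$
where $\{n\}$ is an internal-degree shift by $n$. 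The rightmost term is a free $(R/3)[b^\pm,d]$-module of rank one on a generator of bidegree $(1,n)$, so the sequence splits and exhibits $\hat H^*(C_3;\Sym V/(e_1))$ as a free $(R/3)[b^\pm,d]$-module on $\{1,c\}$ with $|c| = (1,n)$, the class $d$ mapping to $d$. Since $\hat H^*(C_3;-)$ is graded-commutative and $2$ is invertible in the $\F_3$-algebra $R/3$, the odd class $c$ satisfies $c^2 = 0$; hence $\hat H^*(C_3;\Sym V/(e_1)) \isom (R/3)[b^\pm,d]\tensor_R \Lambda_R[c]$ as a graded ring.

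The first isomorphism is essentially formal once the orbit decomposition of the monomial basis is observed. The care is needed in the second half: keeping the internal-degree shift $\{n\}$ straight through the long exact sequence, and checking that the connecting map $\partial$ is $(R/3)[b^\pm,d]$-linear (it is, because the short exact sequence is one of $\Sym V$-modules) so that the splitting of $\hat H^*(C_3;\Sym V/(e_1))$ can be chosen module-linearly. If one prefers to avoid invoking graded-commutativity, $c^2 = 0$ also drops out of the short exact sequence directly, which shows that the ambient bidegree $(2,2n)$ group is zero.
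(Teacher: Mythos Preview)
Your proof is correct and follows essentially the same approach as the paper: the monomial-orbit decomposition for the first isomorphism, and the short exact sequence coming from multiplication by $t_0+t_1+t_2$ for the second. The paper's proof is terse and defers the analysis of the long exact sequence to a reference; you have filled in exactly those details, in particular the observation that $e_1 = \tr(t_0)$ vanishes in $\hat H^0$ so the long exact sequence splits into short exact sequences.
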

\begin{proof}
The first statement comes from grouping
the monomial basis for $R[t_0,t_1,t_2]$ into $R[C_3]$-summands $\dsums_{i\geq
0}R\{ (t_0t_1t_2)^i \}\dsum F$ where $F$ is free and the powers of $d =
t_0t_1t_2$ have trivial action.
Thus
$\hat{H}^*(C_3; R[t_0,t_1,t_2]) \isom \hat{H}^*(C_3; R)[d]$. For the
second statement, use the short exact sequence
\begin{align*} 0 &\to (t_0+t_1+t_2) \to R[t_0+t_1+t_2] \to
R[t_0+t_1+t_2]/(t_0+t_1+t_2)\to 0. \qedhere \end{align*}
More details can be found in \cite[\S10.1]{stojanoska-duality}. 
\end{proof}

\begin{lemma}\label{lem:inner-coh}
As an algebra over $\hat{H}^*(C_3; \W) = \k[b_1^\pm]$ we have
$$\hat{H}^*(C_3; M) \isom \k[b_1^\pm,d_1,d_2,d_3]\tensor
\Lambda_\k[c_1,c_2,c_3]$$
where $d_1 = x_0x_3x_6$, $d_2 = x_1x_4x_7$, $d_3 = x_2x_5x_8$ are in $\hat{H}^0(C_3; M)$, and $c_1,c_2,c_3$ are in $\hat{H}^1(C_3; M)$.
As a module over $\k[C_9/C_3]\isom \k[C_3]$, it has a decomposition
\begin{align*}
 & \Big(\dsums_\attop{\epsilon\in \{ 0,1 \}\\i\geq 0} \k\{ (c_1c_2c_3)^\epsilon(d_1d_2d_3)^i\}\dsum
 \dsums_\attop{\epsilon\in\{ 0,1 \},\,i,j\geq 0\\(i,j)\neq (0,0)}\!\!\! \k[C_3]\{(c_1c_2c_3)^\epsilon d_1^i d_2^i d_3^j \}
				\dsums_\attop{\epsilon\in \{ 0,1 \}\\i<j<k\\\text{or } i >
				j > k} \k[C_3]\{ (c_1c_2c_3)^\epsilon d_1^id_2^jd_3^k \}
				\\  & \dsum\dsums_\attop{\ell\in \{ 1,2,3 \}\\i\leq j<k\\\text{or } i \geq j
				> k} \k[C_3]\{ c_\ell d_1^id_2^jd_3^k \}\dsum
				\dsums_\attop{(\ell,m)\in \{ (1,2),(2,3),(3,1) \}\\i\leq j<k\text{ or } i \geq j
			> k} \k[C_3]\{ c_\ell c_m d_1^id_2^jd_3^k \}\Big) \tensor
			\k[b_1^\pm]
\end{align*}
where the first term consists of trivial summands and the other terms are free
summands.
\end{lemma}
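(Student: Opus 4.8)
The plan is to reduce the lemma to Lemmas~\ref{lem:coh-rhobar} and~\ref{lem:C3-kunneth} together with an elementary orbit count. First I would use Corollary~\ref{cor:conjecture}: writing $M = \W[x_0,\dots,x_8]/(x_0{+}x_3{+}x_6,\ x_1{+}x_4{+}x_7,\ x_2{+}x_5{+}x_8)$ and grouping the polynomial generators by their residue mod $3$ gives a $C_3$-equivariant factorization
$$ M \isom R_0\tensor_\W R_1\tensor_\W R_2,\qquad R_j := \W[x_j,x_{j+3},x_{j+6}]/(x_j+x_{j+3}+x_{j+6}), $$
where the subgroup $C_3=\an{\gamma^3}\subseteq C_9$ acts on each $R_j$ by cyclically permuting its three polynomial generators, since $\gamma^3x_i=x_{i+3}$. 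Each $R_j$ is a polynomial ring in two variables over $\W$, hence $\W$-torsion-free, so Lemma~\ref{lem:coh-rhobar} applies with $R=\W$ and $|x_i|=-2$ to give $\hat{H}^*(C_3;R_j)\isom \k[b^\pm,d_j]\tensor\Lambda_\k[c_j]$, with $d_j=x_jx_{j+3}x_{j+6}$ of degree $(0,-6)$ in $\hat{H}^0$ and $c_j$ of degree $(1,-2)$ in $\hat{H}^1$, while $\hat{H}^*(C_3;\W)=\k[b_1^\pm]$ with $|b_1|=(2,0)$. Iterating Lemma~\ref{lem:C3-kunneth} (legitimate since all the modules in sight are $\W$-torsion-free) then identifies $\hat{H}^*(C_3;M)$ with $\k[b_1^\pm,d_1,d_2,d_3]\tensor\Lambda_\k[c_1,c_2,c_3]$ as an algebra over $\k[b_1^\pm]$, after the relabelling $d_1=x_0x_3x_6,\ d_2=x_1x_4x_7,\ d_3=x_2x_5x_8$. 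This is the algebra statement of the lemma.

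Next I would record how a generator $\bar\gamma$ of $C_9/C_3$ acts on this algebra: it fixes $b_1$ (the canonical periodicity class of $\hat{H}^*(C_3;\W)$, on which $\W$ carries the trivial action), and — by naturality of the identifications above, since $\gamma$ cyclically permutes the $x_i$ and hence the factors $R_j$ — it sends $d_j\mapsto d_{j+1}$ and $c_j\mapsto c_{j+1}$ with indices mod~$3$. Hence, as a $\k[C_9/C_3]\isom\k[C_3]$-module,
$$ \hat{H}^*(C_3;M)\isom \k[b_1^\pm]\tensor_\k V,\qquad V:=\k[d_1,d_2,d_3]\tensor_\k\Lambda_\k[c_1,c_2,c_3], $$
with trivial action on $\k[b_1^\pm]$ and the simultaneous cyclic permutation of the $d$'s and $c$'s on $V$. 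So the lemma comes down to decomposing the $\k[C_3]$-module $V$.

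Here the key observation is that the monomial basis $\{\,d_1^{a_1}d_2^{a_2}d_3^{a_3}c_S:(a_1,a_2,a_3)\in\Z_{\geq 0}^3,\ S\subseteq\{1,2,3\}\,\}$ of $V$ is permuted \emph{up to sign} by the generator $\tau$ of $C_3$, the signs arising only from reordering the odd classes $c_\ell$ (e.g.\ $\tau(c_2c_3)=-c_1c_3$). Since $\tau^3=1$, the product of signs around any $\tau$-orbit is $+1$, so rescaling one basis element per orbit turns $V$ into an honest permutation $\k[C_3]$-module. Every orbit of a $C_3$-set has size $1$ or $3$, contributing respectively a trivial summand $\k$ or a free summand $\k[C_3]$ — and no copy of the $2$-dimensional indecomposable, since $\k[C_3]$ is itself indecomposable. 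A monomial is $\tau$-fixed exactly when $a_1=a_2=a_3$ \emph{and} $S$ is empty or $\{1,2,3\}$ (for which the sign is indeed $+1$); these are precisely the $(d_1d_2d_3)^i(c_1c_2c_3)^\epsilon$, which give the trivial summands listed in the lemma. Each remaining monomial sits in a free orbit of size $3$, and choosing one representative per orbit produces the free summands; the inequality constraints in the statement are exactly such a choice of orbit representatives, sorted by the shape of $(a_1,a_2,a_3)$ (all entries distinct / exactly two equal / all equal, the last relevant only when the $c$-part is a singleton or a pair) together with $|S|$. Tensoring the decomposition of $V$ back with $\k[b_1^\pm]$ gives the claimed result.

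The one step that demands real care is this last bookkeeping: verifying that the displayed families of free summands are pairwise disjoint, exhaust all size-$3$ $\tau$-orbits of monomials, and list each orbit exactly once. The potentially worrying feature — the sign twists on the exterior generators — is harmless, since as noted above it never changes an orbit's size or type; beyond that the argument is a routine check, and all the structural inputs (Lemmas~\ref{lem:coh-rhobar}, \ref{lem:C3-kunneth}, \ref{lem:margolis}) are already in hand.
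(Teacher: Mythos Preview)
Your proposal is correct and follows essentially the same route as the paper: factor $M$ as a tensor product of three copies of $\Sym(\bar\rho)$, apply Lemma~\ref{lem:coh-rhobar} to each factor and Lemma~\ref{lem:C3-kunneth} to assemble them, then decompose the resulting monomial basis into $C_9/C_3$-orbits. Your treatment is in fact slightly more careful than the paper's one-line ``the module decomposition follows by considering the orbit generated by each monomial'': you explicitly address the sign twists on products of the odd classes $c_\ell$ (which the paper sidesteps by using the cyclically ordered basis $c_\ell c_m$ with $(\ell,m)\in\{(1,2),(2,3),(3,1)\}$), and you flag the orbit-representative bookkeeping as the only point requiring verification.
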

\begin{proof}
If $y_0 = x_0+x_3+x_6$, $y_1 = x_1+x_4+x_7$, and $y_2 = x_2+x_5+x_8$
we have an isomorphism of $\W[C_3]$-modules
$$ M\isom \W\{ x_0,x_3,x_6 \}/(y_0) \tensor \W\{ x_1,x_4,x_7 \}/(y_1) \tensor \W\{ x_2,x_5,x_8 \}/(y_2). $$
Using Lemmas \ref{lem:C3-kunneth} and \ref{lem:coh-rhobar}, we have
\begin{align*}
\hat{H}^*(C_3; M) & \isom \hat{H}^*\Big(C_3; {\W\{ x_0,x_3,x_6 \}\over
(y_0)}\Big)
\tensor\hat{H}^*\Big(C_3; {\W\{ x_1,x_4,x_7 \}\over (y_1)}\Big)
\tensor\hat{H}^*\Big(C_3; {\W\{ x_2,x_5,x_8 \}\over (y_2)}\Big)
\\ & \isom \k[b_1^\pm,d_1,d_2,d_3] \tensor \Lambda_\k[c_1,c_2,c_3]
\end{align*}
where $d_1 = x_0x_3x_6$, $d_2 = x_1x_4x_7$, $d_3 = x_2x_5x_8$, and $c_1,c_2,c_3$ are the
corresponding homological degree-1 classes. The generator of $C_9/C_3$
permutes the $c_i$'s and permutes the $d_i$'s. The module decomposition follows
by considering the orbit generated by each monomial.
\end{proof}

\begin{proposition}\label{prop:p>0}
We have
$$\hat{H}^*(C_9/C_3; \hat{H}^*(C_3; \Sym(\Ind^{C_9}_{C_3}\bar{\rho}))) \isom
\k[b_1^\pm,b_2^\pm, s_3]\tensor \Lambda[a_2,\bar{c}]$$
where $\bar{c} = c_1c_2c_3$ and $s_3 = d_1d_2d_3$.
\end{proposition}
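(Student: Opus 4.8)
The plan is to compute $\hat H^*(C_9/C_3; -)$ applied to the $C_9/C_3$-module $\hat H^*(C_3; M)$ whose structure (as a $\k[C_9/C_3]\cong\k[C_3]$-module, together with its multiplicative structure) was determined in Lemma \ref{lem:inner-coh}. Since $\k[C_9/C_3]$ is again a cyclic group of order $3$ over a field of characteristic $3$, the Tate cohomology of each indecomposable summand is known: a free summand $\k[C_3]$ contributes nothing to $\hat H^*$, while a trivial summand $\k$ contributes a copy of $\k[a_2,b_2^\pm]/(a_2^2)$ with $|a_2| = (1,0)$ and $|b_2| = (2,0)$ (cf. Lemma \ref{lem:margolis} and the trivial-coefficients computation \eqref{eq:coh-R}). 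So the whole answer is controlled by the ``trivial part'' of the decomposition in Lemma \ref{lem:inner-coh}, which is exactly
\[
\Big(\dsums_{\epsilon\in\{0,1\},\, i\geq 0} \k\{(c_1c_2c_3)^\epsilon (d_1d_2d_3)^i\}\Big)\tensor\k[b_1^\pm].
\]

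First I would identify this trivial part as a ring: it is $\k[b_1^\pm]\tensor\k[s_3]\tensor\Lambda_\k[\bar c]$ where $s_3 := d_1d_2d_3$ and $\bar c := c_1c_2c_3$, since $s_3$ is the $C_3$-invariant element $d_1d_2d_3$ (degree $(0,-18)$, using $|x_i| = -2$ so $|d_i| = -6$) and $\bar c$ is the invariant element $c_1c_2c_3$ (degree $(3,-18)$ — note $\bar c$ has homological degree $3$ in $\hat H^*(C_3;M)$, which is why it survives to contribute to the $q$-direction); $\bar c^2 = 0$ because the $c_i$ are exterior. Then I would invoke the module decomposition of Lemma \ref{lem:inner-coh} to see that everything else is free over $\k[C_9/C_3]$ and hence Tate-acyclic, so
\[
\hat H^*\big(C_9/C_3;\hat H^*(C_3;M)\big)\isom \hat H^*(C_9/C_3;\k)\tensor_\k \big(\k[b_1^\pm,s_3]\tensor\Lambda_\k[\bar c]\big).
\]
Plugging in $\hat H^*(C_9/C_3;\k) = \k[a_2,b_2^\pm]/(a_2^2) = \k[b_2^\pm]\tensor\Lambda_\k[a_2]$ gives the claimed answer $\k[b_1^\pm,b_2^\pm,s_3]\tensor\Lambda[a_2,\bar c]$.

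The one subtlety — and the step I expect to be the main obstacle — is justifying that the Künneth/direct-sum splitting is compatible with the ring structure, i.e.\ that the summand generators $s_3, \bar c$ really are polynomial/exterior generators over $\k[b_1^\pm,b_2^\pm]\tensor\Lambda[a_2]$ and that there are no multiplicative extensions mixing the free part back in. This is where one has to be slightly careful: the decomposition in Lemma \ref{lem:inner-coh} is only a decomposition of $\k[C_3]$-\emph{modules}, so a priori products of elements of trivial summands could land in free summands. Here it works out because $s_3$ and $\bar c$ are genuinely $C_3$-invariant \emph{elements} of the ring $\hat H^*(C_3;M)$ (not just module generators of trivial summands), so multiplication by them is $C_9/C_3$-equivariant, and the subring they generate over $\k[b_1^\pm]$ is precisely the trivial part; equivariance of multiplication by an invariant element then makes the induced map on $\hat H^*(C_9/C_3;-)$ a ring map. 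I would spell this out by noting that the trivial summands form the subring $\k[b_1^\pm][s_3]\tensor\Lambda_\k[\bar c]\subseteq \hat H^*(C_3;M)$ and that $\hat H^*(C_9/C_3;-)$ of the inclusion, combined with the vanishing on the free complement, gives the displayed isomorphism as algebras over $\hat H^*(C_9/C_3;\k[b_1^\pm])$.

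Finally I would record the degrees to confirm consistency with the statement (and with the later main theorem): $b_1$ in $(2,0)$ coming from $\hat H^*(C_3;\W)$ re-indexed into the $(p,q)$ bigrading, $b_2$ in $(2,0)$ and $a_2$ in $(1,0)$ from $\hat H^*(C_9/C_3;\k)$, $s_3$ in $(0,-18)$ and $\bar c$ in $(3,-18)$ (as an element of total degree $3$ in $\hat H^*(C_3;M)$). No higher obstruction arises because we are computing an $E_2$-type object, not running a spectral sequence, so it is just an application of Lemma \ref{lem:margolis} summand by summand.
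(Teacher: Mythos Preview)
Your proof is correct and follows essentially the same approach as the paper: use the $\k[C_3]$-module decomposition of Lemma \ref{lem:inner-coh}, discard the free summands since they are Tate-acyclic, identify the trivial summands as $\k[b_1^\pm,s_3]\{1,\bar c\}$, and read off the multiplicative structure from $\bar c^2=0$. One small slip: the internal degree of $\bar c=c_1c_2c_3$ is $-6$ (each $c_i$ has internal degree $-2$), not $-18$.
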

In this statement, the inner cohomology is a module over $\hat{H}^*(C_3; \W(\k)) =
\k[b_1^\pm]$ and the outer cohomology is a module over
$\hat{H}^*(C_9/C_3; \k) = \k[b_2^\pm] \tensor \Lambda_\k[a_2]$.
Note that $s_3 = \Delta$ from Corollary \ref{cor:conjecture}.
\begin{proof}
	In order to determine the $C_9/C_3$-Tate cohomology of $\hat{H}^*(C_3; M)$ we just
need to know the non-free summands. By Lemma \ref{lem:inner-coh} these are $\k\{ s_3^i \}$ and
$\k\{ \bar{c}s_3^i \}$ where $\bar{c}=c_1c_2c_3$ and $s_3=d_1d_2d_3$; these all have trivial
$C_9/C_3$-action. Thus we have
$$ \hat{H}^*(C_9/C_3; \hat{H}^*(C_3; M)) \isom \k[b_2^\pm] \tensor \Lambda_\k[a_2]
\tensor \k[b_1^\pm, s_3]\{ 1,\bar{c} \} $$
where $\hat{H}^*(C_9/C_3; \k) \isom \k[b_2^\pm]\tensor \Lambda_\k[a_2]$.
The multiplicative structure results from observing that $\bar{c}^2 = 0$.
\end{proof}

\section{Serre $E_2^{p,q}$ for $p=0$} \label{sec:c1-d1}
In this section we compute the most complicated part of the Serre spectral
sequence $E_2$ term
$\SE^{*,*}(M)$ for $M = \Sym(\Ind_{C_3}^{C_9}\bar{\rho})$,
namely
\begin{equation}\label{eq:E20} \SE_2^{0,*} = H^0(C_9/C_3; \hat{H}^*(C_3;
\Sym(\Ind^{C_9}_{C_3}\bar{\rho}))). \end{equation}
By Lemma \ref{lem:inner-coh}, this is 
$\big( \k[b_1^\pm,d_1,d_2,d_3]\tensor \Lambda_\k[c_1,c_2,c_3]\big)^{C_3}$,
the $C_3$-invariants of the graded commutative ring on the given generators,
where $|d_i| = 0$, $|c_i| = 1$, and $|b_1| = 2$ (this is grading by $q$ in the sense of $\SE_2^{p,q}$).
The action of $C_3$ permutes $\{c_1,c_2,c_3\}$ and $\{d_1,d_2,d_3\}$
and fixes $b_1$, so it suffices to compute $T^{C_3}$ where
$$ T := \k[d_1,d_2,d_3]\tensor \Lambda_\k[c_1,c_2,c_3]. $$

The polynomial invariants of the symmetric group are generated by the elementary symmetric
polynomials. For the alternating group (such as $A_3 = C_3$), one need only add
one generator $\delta = \prod_{i<j}(x_i-x_j)$; see for example \cite[Appendix B, (iii)]{benson-invariants}.
\begin{lemma}\label{lem:alternating}
Suppose the symmetric group $S_3$ acts on $\k[d_1,d_2,d_3]$ by permuting the
generators and $C_3 \subseteq S_3$ is the subgroup of cyclic permutations.
Then
\begin{align*}
S:=\k[d_1,d_2,d_3]^{S_3}  & \isom \k[s_1,s_2,s_3]
\\\k[d_1,d_2,d_3]^{C_3} & \isom \k[s_1,s_2,s_3,\delta]/(\delta^2+s_2^3+s_1^3s_3-s_1^2s_2^2)
\end{align*}
where $s_1 = d_1+d_2+d_3$, $s_2 = d_1d_2 + d_2d_3 + d_3d_1$, $s_3 = d_1d_2d_3$,
and $\delta = (d_1-d_2)(d_2-d_3)(d_3-d_1)$.
\end{lemma}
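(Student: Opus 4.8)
The plan is to compute the two rings of invariants in Lemma~\ref{lem:alternating} by standard invariant-theory arguments, treating the $S_3$ case first as a warm-up and the $C_3 = A_3$ case as the main content. For the $S_3$ statement, the claim is just the fundamental theorem of symmetric functions: $\k[d_1,d_2,d_3]^{S_3}$ is a polynomial ring on the elementary symmetric polynomials $s_1, s_2, s_3$. I would cite this directly, perhaps noting that it holds over any commutative ring (in particular over $\k = \F_{3^6}$, where the characteristic does not interfere since we are not dividing by anything). This gives the inclusion $S = \k[s_1,s_2,s_3] \subseteq \k[d_1,d_2,d_3]^{C_3}$.

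For the $C_3$ case, I would first observe that $\delta = (d_1-d_2)(d_2-d_3)(d_3-d_1)$ is visibly $C_3$-invariant (a cyclic permutation permutes the three factors cyclically, hence fixes the product) but is sent to $-\delta$ by a transposition, so $\delta \notin S$. Since $[S_3 : C_3] = 2$, the extension $\k[d_1,d_2,d_3]^{C_3} / S$ has ``degree'' 2 in the appropriate sense: concretely, $\k[d_1,d_2,d_3] = \k[d_1,d_2,d_3]^{C_3} \oplus (\text{sign-isotypic part})$ as $S/C_3 \cong \Z/2$-modules (this uses that $2$ is invertible in $\k$, which it is since $\mathrm{char}\,\k = 3$), and the sign-isotypic part of $\k[d_1,d_2,d_3]$ is exactly $\delta \cdot \k[d_1,d_2,d_3]^{S_3} = \delta S$, because an antisymmetric polynomial is divisible by each $d_i - d_j$ and hence by their product $\delta$, the quotient being symmetric. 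Therefore $\k[d_1,d_2,d_3]^{C_3} = S \oplus \delta S = S\{1,\delta\}$ as an $S$-module, which shows the ring is generated over $\k$ by $s_1, s_2, s_3, \delta$.

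It remains to identify the single relation. Since $\k[d_1,d_2,d_3]^{C_3}$ is free of rank $2$ over $S = \k[s_1,s_2,s_3]$ on $\{1,\delta\}$, the only relation is the one expressing $\delta^2 \in S$ in terms of $s_1, s_2, s_3$; this is the classical discriminant identity. I would verify $\delta^2 = -4s_1^3 s_3 + s_1^2 s_2^2 + 18 s_1 s_2 s_3 - 4 s_2^3 - 27 s_3^2$ over $\Z$ and then reduce mod $3$: the terms $18 s_1 s_2 s_3$ and $-27 s_3^2$ vanish, $-4 \equiv -1 \equiv 2$ and $+1$ survive unchanged, giving $\delta^2 = -s_1^3 s_3 + s_1^2 s_2^2 - s_2^3$, i.e. $\delta^2 + s_2^3 + s_1^3 s_3 - s_1^2 s_2^2 = 0$, matching the stated relation. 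Finally, to conclude that this is the \emph{only} relation (so that $\k[s_1,s_2,s_3,\delta]/(\delta^2 + s_2^3 + s_1^3 s_3 - s_1^2 s_2^2) \xrightarrow{\ \sim\ } \k[d_1,d_2,d_3]^{C_3}$), I note that the quotient ring on the left is already free of rank $2$ over $\k[s_1,s_2,s_3]$ on $\{1,\delta\}$ (the relation is monic in $\delta$), so the surjection onto the rank-$2$ free $S$-module $\k[d_1,d_2,d_3]^{C_3}$ is an isomorphism. The main obstacle is purely bookkeeping: getting the discriminant formula and its mod-$3$ reduction exactly right, and being careful that the ``$\delta$ divides antisymmetric polynomials'' and ``invariants $\oplus$ sign-part'' arguments do not secretly use invertibility of $3$ — they only use invertibility of $2 = [S_3:C_3]$, which is fine. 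I would cite \cite[Appendix B, (iii)]{benson-invariants} for the structural statement and just record the discriminant computation.
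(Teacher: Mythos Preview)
Your proposal is correct and matches the paper's approach: the paper does not give a separate proof of this lemma but simply cites \cite[Appendix B, (iii)]{benson-invariants} in the sentence preceding the statement, which is exactly the reference you invoke. Your write-up just unpacks that citation (symmetric functions for $S_3$, the $\Z/2$-isotypic decomposition using invertibility of $2$ to get $S\{1,\delta\}$, and the mod-$3$ reduction of the discriminant identity), so there is nothing to add.
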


In particular, we can apply this to the subalgebra $\k[d_1,d_2,d_3]\subseteq
\k[d_1,d_2,d_3]\tensor \Lambda_{\k}[c_1,c_2,c_3]$. Next
we will get a (non-optimal) set of multiplicative generators for the desired
ring of $C_3$-invariants.

\begin{definition}
If $X$ is a monomial that is not fixed by $C_3$, write $\tr(X) = X + \gamma X +
\gamma^2 X$ where $C_3 = \an{\gamma}$. 
\end{definition}

\begin{lemma} \label{lem:c-gens}
Let $T = \k[d_1,d_2,d_3]\tensor \Lambda_{\k}[c_1,c_2,c_3]$ as above.
Then $T^{C_3}$ is generated as a module over $S:= \k[d_1,d_2,d_3]^{S_3}$ by
\begin{align}
\label{eq:c-gens}  & 1, \delta, \tr(c_1), \tr(c_1c_2), c_1c_2c_3, c_1c_2c_3\delta,
\\\notag & \tr(c_id_1), \tr(c_id_1d_2), \tr(c_id_1^2d_2), \tr(c_id_1d_2^2)
\\\notag & \tr(c_ic_jd_1), \tr(c_ic_jd_1d_2), \tr(c_ic_jd_1^2d_2), \tr(c_ic_jd_1d_2^2)
\end{align}
for $i=1,2,3$, and $(i,j)\in \{ (1,2),(2,3),(3,1) \}$.
\end{lemma}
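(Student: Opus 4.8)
The plan is to split $T = \k[d_1,d_2,d_3]\tensor\Lambda_\k[c_1,c_2,c_3]$ according to exterior degree in the $c_i$'s. Because $C_3$ merely permutes the $c_i$ (and the $d_i$), the decomposition $T = T_0\oplus T_1\oplus T_2\oplus T_3$ into exterior-degree-$k$ pieces is one both of $\k[C_3]$-modules and of $S$-modules (recall $S=\k[d_1,d_2,d_3]^{S_3}\isom\k[s_1,s_2,s_3]$). Hence $T^{C_3} = \bigoplus_k T_k^{C_3}$, and I would prove the lemma by exhibiting $S$-module generators of each of the four summands and matching them against a sublist of \eqref{eq:c-gens}.

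The cases $k=0$ and $k=3$ are immediate from Lemma \ref{lem:alternating}. There $T_0^{C_3} = \k[d_1,d_2,d_3]^{C_3}$ is free over $S$ on $\{1,\delta\}$ since $\delta^2\in S$; and since a $3$-cycle is an even permutation, $\gamma$ fixes $c_1c_2c_3$, so $T_3^{C_3} = c_1c_2c_3\cdot\k[d_1,d_2,d_3]^{C_3}$ is free over $S$ on $\{c_1c_2c_3,\,c_1c_2c_3\delta\}$. This accounts for the generators $1,\delta,c_1c_2c_3,c_1c_2c_3\delta$ in the first line of \eqref{eq:c-gens}.

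The real content is the case $k=1$; the case $k=2$ is literally the same, because $\gamma$ cyclically permutes $c_1c_2,c_2c_3,c_3c_1$ with no sign, so $T_2\isom T_1$ as $\k[d_1,d_2,d_3][C_3]$-modules. For $T_1 = \k[d_1,d_2,d_3]\{c_1,c_2,c_3\}$ I would use the ``untwisting'' isomorphism of $S$-modules $\k[d_1,d_2,d_3]\to T_1^{C_3}$, $f\mapsto fc_1+\gamma(f)c_2+\gamma^2(f)c_3$: it is $S$-linear because $S$ is $\gamma$-fixed, injective because one can read off the $c_1$-coefficient, and surjective because an invariant element $\sum f_ic_i$ forces $f_2=\gamma(f_1)$ and $f_3=\gamma^2(f_1)$. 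Consequently $T_1^{C_3}$ is generated over $S$ by the images of any $S$-module generating set of $\k[d_1,d_2,d_3]$. Since it is classical that $\k[d_1,d_2,d_3]$ is free of rank $6$ over $S$ with basis the staircase monomials $\{1,d_1,d_1^2,d_2,d_1d_2,d_1^2d_2\}$, it suffices to locate these six elements in the $S$-span of the $f$'s coming from \eqref{eq:c-gens}.

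That last step is the only place requiring care, and it is routine bookkeeping. Computing $\gamma$-orbits shows that under the isomorphism above $\tr(c_id_1)$ ($i=1,2,3$) corresponds to $f=d_1,d_3,d_2$; $\tr(c_id_1d_2)$ corresponds to $f=d_1d_2,d_1d_3,d_2d_3$; the two degree-$3$ families $\tr(c_id_1^2d_2)$ and $\tr(c_id_1d_2^2)$ together account for all six monomials $d_i^2d_j$ with $i\neq j$; and $\tr(c_1)$ corresponds to $f=1$. Thus $1,d_1,d_2,d_1d_2,d_1^2d_2$ occur directly, and the remaining basis element satisfies $d_1^2 = s_1d_1-d_1d_2-d_1d_3$, which lies in the $S$-span of $d_1,d_1d_2,d_1d_3$, all of which are present. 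So the listed $f$'s generate $\k[d_1,d_2,d_3]$ over $S$, hence the listed elements generate $T_1^{C_3}$ over $S$; running the identical argument for $T_2$ finishes the proof. The one subtlety worth flagging is orientation: using the cyclic indexing $c_1c_2,c_2c_3,c_3c_1$ (rather than $c_1c_2,c_1c_3,c_2c_3$) is what makes $\gamma$ act as an honest cyclic permutation on the exterior-degree-$2$ classes with no stray signs, so that $k=2$ reduces cleanly to $k=1$.
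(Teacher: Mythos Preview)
Your proof is correct and takes a cleaner route than the paper's. The paper argues by a double induction: it reduces to showing each $\tr(c_i d_1^a d_2^b d_3^0)$ lies in the $S$-span of the listed elements, inducting first on total $d$-degree $h=a+b$ and, within fixed $h$, on $D=|a-b|$, with several case splits at small values. Your argument replaces all of this by the single observation that the map $f\mapsto fc_1+\gamma(f)c_2+\gamma^2(f)c_3$ is an $S$-module isomorphism $\k[d_1,d_2,d_3]\to T_1^{C_3}$, so the problem becomes: do the preimages of the listed generators span $\k[d_1,d_2,d_3]$ over $S$? Appealing to the standard staircase basis $\{1,d_1,d_1^2,d_2,d_1d_2,d_1^2d_2\}$ and one elementary relation for $d_1^2$ finishes it.

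What you gain is not just brevity. Your argument shows directly that $T_1^{C_3}$ (and $T_2^{C_3}$) is \emph{free} of rank $6$ over $S$, which the paper establishes only later (Proposition~\ref{prop:c1-d1}) via a {\tt sage} computation searching for relations. So your untwisting isomorphism renders that computer check unnecessary for freeness; one would only need it (or a short hand check) to cut the $13$ listed degree-$1$ generators down to the six $f_i$ of Lemma~\ref{lem:fbar}. The paper's inductive proof, by contrast, is self-contained in the sense of not invoking the Artin/Chevalley freeness of $\k[d_1,d_2,d_3]$ over $S$, but that is a modest saving given how standard that fact is.
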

\begin{proof}
Note that every monomial can be written in the form $X$, $c_iX$, $c_ic_jX$, or
$c_1c_2c_3X$ for $X\in \k[d_1,d_2,d_3]$ and $i\neq j$.
For ease of notation, write $c_1[i_1,i_2,i_3] := c_1d_1^{i_1}d_2^{i_2}d_3^{i_3}$ and
similarly for other $c$-multiples.
The orbit types are as follows:
\begin{enumerate} 
\item $[a,a,a]$ and $c_1c_2c_3[a,a,a]$, which are fixed by $C_3$;
\item the $C_3$-free orbits generated by $c_i[a,a,a]$ and $c_ic_j[a,a,a]$;
\item the $C_3$-free orbits generated by $[a,a,b]$ and $c_1c_2c_3[a,a,b]$ with
$a\neq b$;
\item the $C_3$-free orbits generated by $[i_1,i_2,i_3]$ and
$c_1c_2c_3[i_1,i_2,i_3]$ where $i_1,i_2,i_3$ are all distinct;
\item the $C_3$-free orbits generated by $c_i[i_1,i_2,i_3]$ and
$c_ic_j[i_1,i_2,i_3]$ where $i_1,i_2,i_3$ are not all the same.
\end{enumerate}
Use the $q$-grading (i.e., $|c_i| = 1$ and $|d_i| = 0$) and write $T_i\subseteq T$ for the
component in grading $i$.
Additively, Lemma \ref{lem:alternating} gives $T_0^{C_3} \isom S\{1, \delta \}$, and by observing the orbit types above of
degree 3 we see that $T_3^{C_3} = S\{ c_1c_2c_3, c_1c_2c_3\delta \}$. This covers orbit types
(1), (3), and (4). Orbits of type (2) have fixed points
$(c_1+c_2+c_3)[a,a,a] = \tr(c_1)s_3^a$ and $(c_1c_2 + c_2c_3 + c_3c_1)[a,a,a] =
\tr(c_1c_2)s_3^a$.
It remains to show that fixed points of orbit type (5) can be written as an
$S$-combination of the generators \eqref{eq:c-gens}.
We focus on the $c_i[i_1,i_2,i_3]$ case because the other case is similar.

Let $N \subseteq T^{C_3}$ denote the $S$-submodule generated by \eqref{eq:c-gens}.
The $C_3$-fixed points of the orbit $c_i[i_1,i_2,i_3]$ is generated over $\k$ by
$\tr(c_i[i_1,i_2,i_3])$.
We will show that $\tr(c_i[i_1,i_2,i_3])\in N$ by strong induction on
total $d$-homogeneous degree $h:=i_1+i_2+i_3$. For the base case, it is clear that
the only $C_3$-invariant polynomials with $h=1$ are $\tr(c_id_1)$ for $i=1,2,3$.
Let $h \geq 2$, and assume that all terms of the form $\tr(c_i[i_1,i_2,i_3])$
with $i_1+i_2+i_3 < h$ are in $N$.

If $i_1,i_2,i_3$ are all $>0$, then $\tr(c_i[i_1,i_2,i_3]) = s_3\cdot
\tr(c_i[i_1-1,i_2-1,i_3-1])$, and the last term is in $N$ by the inductive
hypothesis. Thus we may assume that one of the indices is zero; without loss of
generality assume this is the last index. Now we will change
notation so that the new goal is to show that $\tr(c_i[a,b,0])\in N$ for some
$a,b$ with $a+b=h\geq 2$.

For fixed homogeneous degree $h$, we do induction on the quantity $D(a,b,0):=
|a-b|$. For the base case, assume $a=b$. The case $a=1$ is covered by the
generator $\tr(c_id_1d_2)$ of $N$, so we may assume $a \geq 2$. We have
\begin{align*}
(c_1d_1^{a-1}d_2^{a-1} & + c_2 d_2^{a-1}d_3^{a-1} + c_3 d_1^{a-1} d_3^{a-1})
\cdot (d_1d_2+d_2d_3+d_1d_3) 
\\ & = c_1 d_1^a d_2^a + c_2 d_2^a d_3^a + c_3 d_1^ad_3^a + \text{terms
divisible by $s_3$}.
\end{align*}
The first factor is in $N$ by the $h$-induction hypothesis, the second
factor is in $S$, and the terms divisible by $s_3$ can be written as $s_3$ times a term covered by the
$h$-induction hypothesis. This shows that $\tr(c_1d_1^ad_2^a)\in N$.

We also need another base case, $D = 1$. Without loss of generality
assume $a = b+1$. Since $h=a+b\geq 2$, we have $b\geq 1$.
If $b= 1$, then this is $\tr(c_id_1^2d_2)$. If $b\geq 2$, we have
\begin{align*}
\tr(c_id_1^2d_2)(d_1^{b-1}d_2^{b-1} + d_2^{b-1}d_3^{b-1}+d_1^{b-1}d_3^{b-1})
& = c_id_1^{b+1} d_2^b + c_{i+1}d_2^{b+1} d_3^b + c_{i+2}d_1^bd_3^{b+1}
\\ & \hspace{15pt} + \text{terms divisible by }s_3
\end{align*}
and the left hand side is in $N$. Thus $\tr(c_i d_1^{b+1}b_2^b)\in N$.

Now we do the inductive step in $D = |a-b|$ (still fixing $h\geq 2$) and assume $|a-b| \geq 2$.
Without loss of generality, $a> b$. 

\emph{Case 1: $b=1, a \geq 3$.} We have
\begin{align*}
(c_id_1^2d_2  & + c_{i+1}d_2^2 d_3 + c_{i+2}d_1d_3^2)(d_1^{a-2} + d_2^{a-2} + d_3^{a-2})
\\ & = c_id_1^a d_2 + c_{i+1}d_2^a d_3 + c_{i+2}d_1d_3^a + \tr(c_id_1^2d_2^{a-1})
+ \tr(c_id_1^2d_2d_3^{a-2}).
\end{align*}
The first factor on the left hand side is the generator $\tr(c_id_1^2d_2)$ of $N$, the
second factor is in $S$, and
we claim the last two terms are also in $N$: first,
$\tr(c_id_1^2d_2^{a-1})$ has $D(2,a-1) = a-3 < D(a,1)=a-1$ so this is in $N$ by
the $D$-induction hypothesis, and
$\tr(c_id_1^2d_2d_3^{a-2})$ is a multiple of $s_3$ so it can be written as $s_3$
times a term covered by the $h$-induction hypothesis. Thus $\tr(c_id_1^ad_2)\in
N$.

\emph{Case 2: $b=0,a\geq 2$.} We have
\begin{align*}
(c_id_1 + &c_{i+1}d_2 + c_{i+2}d_3) (d_1^{a-1}+d_2^{a-1}+d_3^{a-1}) 
\\& = 
c_id_1^a + c_{i+1}d_2^a + c_{i+2}d_3^a + \tr(c_id_1d_2^{a-1}) + \tr(c_id_1d_3^{a-1})
\end{align*}
where the first factor on the left hand side is the generator $\tr(c_id_1)$ of
$N$ and the
second factor is in $S$. If $a=2,3$ then the last two terms on
the right are generators (note that $\tr(c_id_1d_3) = \tr(c_{i+1}d_1d_2)$). 
If $a \geq 4$, then the last two terms are in $N$ by Case 1 (using its analogous
version for $a=1$, $b\geq 3$). Thus $\tr(c_id_1^a)\in N$.

\emph{Case 3: $b>1, a>3$.} Let $s_{(i,j,k)} = \tr(d_1^id_2^jd_3^k) +
\tr(d_1^kd_2^jd_3^i)\in S$. Recall the abbreviated notation $[i,j,k] := d_1^id_2^jd_3^k$.
We have
\begin{align*}
\tr(c_id_1^2d_2) \cdot s_{(a-2,b-1,0)}  & = (c_i[2,1,0]  + c_{i+1}[0,2,1] + c_{i+2}[1,0,2])
\\ &\hspace{10pt} \cdot \big([a-2,b-1,0] + [0,a-2,b-1] + [b-1,0,a-2]
\\ &\hspace{18pt} + [b-1,a-2,0] + [0,b-1,a-2]+[a-2,0,b-1]\big)
\\ &= c_i[a,b,0] + c_i[b+1,a-1,0] + c_{i+1}[0,a,b] 
\\ & \hspace{15pt} + c_{i+1}[0,b+1,a-1] + c_{i+2}[b,0,a] + c_{i+2}[a-1,0,b+1]
\\ & \hspace{15pt} + \text{terms divisible by }s_3
\\ & = \tr(c_i[a,b,0])+ \tr(c_i[b+1,a-1,0])
+ \text{terms divisible by }s_3.
\end{align*}
The left hand side is in $N$, and since $a-b \geq 2$ we have $D(b+1,a-1,0) = a -
b -2 < D(a,b,0)$ and so $\tr(c_i[b+1,a-1,0])$ is covered by the
$D$-inductive hypothesis. Using the same argument as above for the $s_3$-divisible
terms, we conclude that $\tr(c_i[a,b,0])\in N$.
\end{proof}

In the next lemma, we point out that some of the generators above are redundant.
\begin{lemma}\label{lem:fbar}
The elements $1,c_1c_2c_3,\delta,c_1c_2c_3\delta$, and
\begin{align*}
f_0 & = \tr(c_1)  & f_2 & = \tr(c_1d_2) & f_4 & = \tr(c_1d_2d_3)
\\f_1  & = \tr(c_1d_1)  & f_3 & = \tr(c_1d_1d_2) &  f_5 & = \tr(c_1d_1^2d_2)
\\F_0 & = \tr(c_1c_2) & F_2 & = \tr(c_1c_2d_2) & F_4 & = \tr(c_1c_2d_2d_3)
\\F_1  & = \tr(c_1c_2d_1)  & F_3 & = \tr(c_1c_2d_1d_2) &  F_5 & = \tr(c_1c_2d_1^2d_2).
\end{align*}
form a set of $S$-module generators of $T^{C_3}$.
\end{lemma}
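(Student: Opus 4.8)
\textbf{Proof plan for Lemma \ref{lem:fbar}.}

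The plan is to show that the twelve ``redundant-looking'' generators from Lemma \ref{lem:c-gens} --- namely $\tr(c_id_1)$, $\tr(c_id_1d_2)$, $\tr(c_id_1^2d_2)$, $\tr(c_id_1d_2^2)$ for $i=2,3$ and the analogous $c_ic_j$ terms for $(i,j)\in\{(2,3),(3,1)\}$, together with $\tr(c_1d_1^2d_2^2)$-type leftovers --- all lie in the $S$-submodule generated by the listed $f_i$, $F_i$, $1$, $\delta$, $c_1c_2c_3$, $c_1c_2c_3\delta$. The key observation is that applying the generator $\gamma$ of $C_3$ to any $\tr(\cdots)$ leaves it unchanged, so $\tr(c_2 X) = \tr(c_1 \gamma^{-1}X)$ and $\tr(c_3 X) = \tr(c_1\gamma^{-2}X)$; hence every $\tr(c_i\, d_1^{a}d_2^{b}d_3^{c})$ equals some $\tr(c_1\, d_1^{a'}d_2^{b'}d_3^{c'})$ with the exponent string cyclically rotated. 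This immediately collapses the three $i$-values to one, and similarly collapses the three $(i,j)$ pairs to $(1,2)$. So it suffices to treat $\tr(c_1 X)$ and $\tr(c_1c_2 X)$ for $X$ running over the monomials $d_1d_2^2$, and to double-check that the four $X\in\{1,d_1,d_1d_2,d_1^2d_2,d_1d_2^2,d_2,d_2d_3,\ldots\}$ appearing in Lemma \ref{lem:c-gens} are each hit.

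First I would dispose of the $\tr(c_1d_1d_2^2)$ generator (and its $c_1c_2$ analogue). Writing $\tr(c_1d_1d_2^2)$ out, one checks the identity
\begin{align*}
\tr(c_1d_1d_2^2) = s_1\cdot \tr(c_1d_1d_2) - \tr(c_1d_1^2d_2) - (\text{terms divisible by }s_3)\cdot\tr(c_1),
\end{align*}
where $s_1 = d_1+d_2+d_3 \in S$: multiplying $\tr(c_1d_1d_2) = c_1d_1d_2 + c_2d_2d_3 + c_3d_3d_1$ by $s_1$ produces $c_1d_1d_2^2 + c_1d_1^2d_2 + (c_1d_1d_2d_3) + \text{(cyclic)}$, i.e.\ $\tr(c_1d_1d_2^2) + \tr(c_1d_1^2d_2) + s_3\tr(c_1)$. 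Rearranging expresses $\tr(c_1d_1d_2^2) = f_4 \cdot(\text{hmm})$ --- more precisely it expresses $\tr(c_1d_1d_2^2)$ as an $S$-combination of $f_3 = \tr(c_1d_1d_2)$, $f_5 = \tr(c_1d_1^2d_2)$, and $s_3 f_0$. The same multiplication trick with $s_1$ handles the $F$-analogue: $\tr(c_1c_2d_1d_2^2)$ becomes an $S$-combination of $F_3, F_5, s_3 F_0$. This removes the ``$d_1d_2^2$'' generators from the list of Lemma \ref{lem:c-gens}, leaving exactly the generators named in the statement.

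The main obstacle --- which is really just bookkeeping --- is verifying that no \emph{further} reduction is needed, i.e.\ that the remaining ten $f$'s and $F$'s together with $1,\delta,c_1c_2c_3,c_1c_2c_3\delta$ genuinely span everything Lemma \ref{lem:c-gens} produces, and in particular that I have correctly tracked the ``terms divisible by $s_3$'' in the multiplication identity. Here one uses that $s_3 = d_1d_2d_3$ and that $s_3\cdot\tr(c_1 X) = \tr(c_1\, d_1d_2d_3 X)$, so those correction terms are themselves $S$-multiples of lower-$h$ generators already in the span by the inductive structure of Lemma \ref{lem:c-gens}; since $f_0 = \tr(c_1)$ is the lowest one, $s_3 f_0$, $s_3 f_1 = s_3\tr(c_1d_1)$, etc.\ are all manifestly in the $S$-span. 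I would present the $s_1$-multiplication identity once for the $c_1$ case, remark that the $c_1c_2$ case is formally identical (replace $c_1$ by $c_1c_2$ throughout, using $c_1c_2 c_i = 0$ when the $c$'s collide, which only kills terms and never obstructs), invoke the $\gamma$-rotation remark to reduce from $c_i$/$c_ic_j$ to $c_1$/$c_1c_2$, and conclude. No genuinely hard step appears; the only place to be careful is signs and the exact form of the $s_3$-divisible remainder, and those are handled exactly as in the proof of Lemma \ref{lem:c-gens}.
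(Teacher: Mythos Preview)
Your rotation identity $\tr(c_iX)=\tr(c_1\gamma^{1-i}X)$ is correct, but it does not ``collapse the three $i$-values to one'' in the sense of reducing the generator count: it only relabels $\tr(c_2 d_1)=\tr(c_1 d_3)$, $\tr(c_3 d_1^2 d_2)=\tr(c_1 d_2^2 d_3)$, etc. After applying it to every $\tr(c_i X)$ with $X\in\{d_1,d_1d_2,d_1^2d_2,d_1d_2^2\}$ and $i=1,2,3$, you have twelve elements $\tr(c_1 X')$ with $X'$ ranging over all twelve monomials $d_j$, $d_jd_k$, $d_j^2d_k$ ($j\neq k$). Only five of these are among $f_1,\dots,f_5$; the other seven ($\tr(c_1d_3)$, $\tr(c_1d_1d_3)$, and five degree-three terms) still have to be written as $S$-combinations of the $f_i$. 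Your $s_1f_3$ identity handles exactly one of the seven, namely $\tr(c_1d_1d_2^2)$, and the plan is silent on the remaining six.

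The paper closes this gap with a second, genuinely different reduction: the identity $f_0\cdot \tr(X)=\tr(c_1X)+\tr(c_2X)+\tr(c_3X)$, which for each $X$ gives an $S$-linear relation among the three $i$-values and lets one discard (say) all $i=2$ generators. Combined with the rotation identifications $\tr(c_3d_1)=f_2$ and $\tr(c_3d_1d_2)=f_4$, this leaves only three elements requiring explicit formulas, which the paper then writes down. Your approach can be made to work without this sum trick, but you would need several further multiplication identities (e.g.\ $s_1f_0$ for $\tr(c_1d_3)$, $s_2f_0$ for $\tr(c_1d_1d_3)$, $s_2f_1$ and $s_2f_2$ and $s_1f_4$ for the remaining degree-three terms), and the current write-up does not supply or even acknowledge them.
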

\begin{proof}
Note that $\tr(c_3d_1)= \tr(c_1d_2)$ and $\tr(c_3d_1d_2)=\tr(c_1d_2d_3)$.
If $\{ X_1,X_2,X_3 \}$ is an orbit of monomials in $S$, we have
$$ (c_1+c_2+c_3)(X_1+X_2+X_3) = \tr(c_1X_1) + \tr(c_2X_1) + \tr(c_3X_1), $$
and so $\tr(c_2d_1)$, $\tr(c_2d_1d_2)$, $\tr(c_2d_1^2d_2)$, and
$\tr(c_2d_1d_2^2)$ from \eqref{eq:c-gens} are not needed as $S$-module generators.
A similar observation eliminates one generator (say, the $(i,j)=(2,3)$ indexed one) from
every family in the third row of \eqref{eq:c-gens}. Moreover, we have
\begin{align*}
\tr(c_3d_1^2d_2)  & = f_5 + f_2s_2 - f_3s_1
\\\tr(c_1d_1d_2^2) & = -f_5 - f_0s_3 + f_3s_1
\\\tr(c_3d_1d_2^2) & = -f_5 - f_0s_3 - f_2s_2 + f_3s_1 + f_4s_1
\\\tr(c_3c_1d_1^2d_2)  & = F_5 + F_2s_2 - F_3s_1
\\\tr(c_1c_2d_1d_2^2) & = -F_5 - F_0s_3 + F_3s_1
\\\tr(c_3c_1d_1d_2^2) & = -F_5 - F_0s_3 - F_2s_2 + F_3s_1 + F_4s_1.
\end{align*}
The remaining generators from \eqref{eq:c-gens} are the ones listed in the lemma
statement.
\end{proof}

\begin{proposition}\label{prop:c1-d1}
As an $S$-module, $T^{C_3} = \big(\k[d_1,d_2,d_3]\tensor \Lambda_{\k}[c_1,c_2,c_3]\big)^{C_3}$ is
free on the generators $\{ 1, \bar{c},\delta, \bar{c}\delta,f_0,\dots,f_5,F_0,\dots,F_5 \}$
where $\bar{c}=c_1c_2c_3$ and the generators $f_i,F_i$ are as in Lemma
\ref{lem:fbar}.
\end{proposition}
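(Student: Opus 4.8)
The plan is to establish that the set $\{1,\bar c,\delta,\bar c\delta,f_0,\dots,f_5,F_0,\dots,F_5\}$ is not merely a generating set (which is Lemma \ref{lem:fbar}) but an $S$-basis, i.e.\ that these $16$ elements are $S$-linearly independent. The cleanest route is a rank/Poincar\'e-series count: both $T$ and $S = \k[s_1,s_2,s_3]$ are graded (by total $d$-degree, and separately by $q$-degree coming from the number of $c$-factors), $T$ is a free $S$-module by the standard theory of invariants (the Cohen--Macaulay property of $T^{C_3}$, or more elementarily the fact that $\k[d_1,d_2,d_3]$ is free over $S$ of rank $6 = |S_3|$, tensored with the exterior algebra on the $c_i$), and so it suffices to check that the proposed generators have the right count in each graded piece. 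Concretely I would compare Hilbert series: $\k[d_1,d_2,d_3]$ is free of rank $6$ over $S=\k[s_1,s_2,s_3]^{S_3}$, and restricting to the $C_3$-invariants $\k[d_1,d_2,d_3]^{C_3}$ gives a free $S$-module of rank $3$ (with basis $1,\delta$ in $q$-degree $0$ contributing rank $2$... wait, rank is $|S_3/C_3|\cdot$ nothing — rather $\k[d_1,d_2,d_3]^{C_3}$ is free over $S$ of rank $2$, basis $\{1,\delta\}$, by Lemma \ref{lem:alternating}).

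So the argument should be organized by $q$-degree. Tensoring with $\Lambda_\k[c_1,c_2,c_3]$ and taking $C_3$-invariants, the piece in $q$-degree $0$ is $\k[d_1,d_2,d_3]^{C_3}$, free over $S$ of rank $2$ with basis $\{1,\delta\}$; the piece in $q$-degree $3$ is $c_1c_2c_3\cdot \k[d_1,d_2,d_3]^{C_3}$, again free of rank $2$ with basis $\{\bar c,\bar c\delta\}$. The interesting pieces are $q$-degree $1$ and $q$-degree $2$. In $q$-degree $1$, $T_1 = \k[d_1,d_2,d_3]\{c_1,c_2,c_3\} \cong \k[d_1,d_2,d_3]\otimes \mathrm{Ind}_{C_3}^{C_3}$ is the regular representation over the polynomial ring, hence $(T_1)^{C_3} \cong \k[d_1,d_2,d_3]$, which is free over $S$ of rank $6$; one then checks that $f_0,\dots,f_5$ are an $S$-basis. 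Similarly $q$-degree $2$ gives $(T_2)^{C_3}\cong \k[d_1,d_2,d_3]$, free of rank $6$ over $S$, and $F_0,\dots,F_5$ should be an $S$-basis. This accounts for $2+2+6+6=16$ basis elements, matching the claimed list.

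The key remaining step — and the main obstacle — is to verify that $f_0,\dots,f_5$ (resp.\ $F_0,\dots,F_5$) really form an $S$-basis of the rank-$6$ free module $(T_1)^{C_3}\cong \k[d_1,d_2,d_3]$, rather than just a spanning set. Since we already know they span (Lemma \ref{lem:fbar}) and there are exactly $6$ of them over a ring where the module is free of rank $6$, a spanning set of the right cardinality of a free module over a graded connected ring is automatically a basis — provided the generators are homogeneous and the count is correct in each graded degree. So the real content is: (i) identify the $C_3$-fixed subspace of $T_1$ with a free $S$-module of rank $6$ via the isomorphism $T_1\cong \k[d_1,d_2,d_3]\otimes_{\k}\k[C_3]$ sending $c_i\mapsto \gamma^{i-1}$, under which $(T_1)^{C_3}\cong \k[d_1,d_2,d_3]$ by the transfer (this is the "$\mathrm{Ind}$ has trivial fixed points of rank $1$" computation, exactly as in the free-summand bookkeeping of Lemma \ref{lem:inner-coh}); and (ii) compute the Hilbert series of $S$ and of the span of $f_0,\dots,f_5$ over $S$, and observe they agree. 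One should double-check the $d$-degrees of $f_0,\dots,f_5$: they are $0,1,1,2,2,3$ (with two generators each in degrees $1$ and $2$), and one verifies $\sum_i t^{\deg f_i} = (1 + t)^2(1 + t + t^2)\cdot(\text{something})$... more simply, $\k[d_1,d_2,d_3]$ as a free module over $S$ of rank $6$ has basis in degrees matching $\{0,1,1,2,2,3\}$ (the degrees of the standard coinvariant-algebra basis $1, d_1, d_2, d_1^2, d_1d_2, d_1^2d_2$ type elements), which is precisely the degree multiset of $\{f_0,\dots,f_5\}$ and of $\{F_0,\dots,F_5\}$. Once the degree bookkeeping lines up, freeness is forced. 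I would therefore present the proof as: (1) reduce to showing linear independence; (2) split by $q$-degree; (3) in each $q$-degree identify $(T_q)^{C_3}$ with a free $S$-module of known rank and Hilbert series using the orbit decomposition of Lemma \ref{lem:inner-coh}; (4) match Hilbert series of the proposed generators with those of the free module; (5) conclude, via "a homogeneous spanning set whose Hilbert series equals that of a free module is a basis."
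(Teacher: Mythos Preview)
Your proposal is correct and takes a genuinely different route from the paper's proof.

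The paper's argument (relegated to the appendix) is computational: in $q$-degrees $0$ and $3$ the claim is immediate, and in $q$-degree $1$ the authors use \texttt{sage} to compute a Gr\"obner basis for the ideal of algebraic relations among $f_0,\dots,f_5,s_1,s_2,s_3$ in the associated \emph{commutative} polynomial ring, then observe that every relation in the output has $c$-degree at least $2$, so no $S$-linear relation $\sum_i f_i\phi_i(s_1,s_2,s_3)=0$ exists. The $q$-degree $2$ case is handled by the formal substitution $c_i\leadsto c_ic_{i+1}$.

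Your argument is structural: in $q$-degree $1$ you identify $T_1=\k[d_1,d_2,d_3]\{c_1,c_2,c_3\}$ with $\k[d_1,d_2,d_3]\otimes_\k \k[C_3]$ under the diagonal action, untwist to see $(T_1)^{C_3}\cong \k[d_1,d_2,d_3]$ as $S$-modules, invoke the classical fact that $\k[d_1,d_2,d_3]$ is free of rank $6$ over $S$ with basis degrees $\{0,1,1,2,2,3\}$ (the coinvariant Hilbert polynomial $(1+t)(1+t+t^2)$), and then observe that $f_0,\dots,f_5$ are a homogeneous spanning set (Lemma~\ref{lem:fbar}) with exactly this degree multiset, so the induced surjection of graded $S$-modules is a degreewise surjection of equal-dimensional vector spaces, hence an isomorphism. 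This is cleaner and entirely computer-free. The one place to tighten the write-up is to make explicit that the untwisting isomorphism $m\otimes \gamma^i\mapsto \gamma^{-i}m\otimes \gamma^i$ is $S$-linear (it is, precisely because $S$ consists of $C_3$-invariants); your proposal gestures at this but does not say it outright. What the paper's approach buys in exchange for the machine computation is the explicit list of quadratic relations among the $f_i$ over $S$, which the authors record separately and use elsewhere; your Hilbert-series argument proves freeness without ever seeing those relations.
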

The proof, which involves a calculation using {\tt sage}, is given in the appendix. We summarize the results, using the grading $|x_i| = -2$ from Conjecture \ref{conj:action}.

\begin{corollary}\label{cor:E20*}
Let $S_\k = \k[s_1,s_2,s_3]$ with generators defined in Lemma \ref{eq:E20}.
In the Serre spectral sequence \eqref{eq:E20}, 
$\SE_2^{0,*}$ is a free module over $S_\k[b_1^\pm]$ with generators
$1,\bar{c},\delta,  \bar{c}\delta, f_0,\dots,f_5, F_0,\dots,F_5$. The degrees are given as follows, written as $(p,q,t)$ where $t$ is internal degree.
\begin{align*}
|b_1| & = (0,2,0) & |\bar{c}| & = (0,3,-6) & |\delta|  & = (0,0,-18) &
|\bar{c}\delta| & = (0,3,-24)
\\|f_0| & = (0,1,-2) & |f_1| & = (0,1,-8) & |f_2| & = (0,1,-8)
\\|f_3| & = (0,1,-14) & |f_4| & = (0,1,-14) & |f_5| & = (0,1,-20)
\\ |F_0| & = (0,2,-4) 
 & |F_1| & = (0,2,-10) & |F_2| & = (0,2,-10)
\\|F_3| & = (0,2,-16) & |F_4| & = (0,2,-16) & |F_3| & = (0,2,-22)
\end{align*}
\end{corollary}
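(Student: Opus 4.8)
The plan is to deduce Corollary \ref{cor:E20*} from Proposition \ref{prop:c1-d1} and Lemma \ref{lem:inner-coh}, after which all that is left is a degree count in the trigrading $(p,q,t)$. First I would unwind the definition: $\SE_2^{0,q} = H^0(C_9/C_3; \hat{H}^q(C_3;M))$ is just the module of $C_9/C_3$-invariants of $\hat{H}^q(C_3;M)$. By Lemma \ref{lem:inner-coh} there is a $C_9/C_3$-equivariant isomorphism $\hat{H}^*(C_3;M)\isom \k[b_1^\pm]\tensor_\k T$, where $T = \k[d_1,d_2,d_3]\tensor \Lambda_\k[c_1,c_2,c_3]$, and (under the identification $C_9/C_3\isom C_3$ of that lemma) the action fixes $b_1^\pm$ and permutes the $c_i$ and the $d_i$. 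Since $b_1^\pm$ is fixed and $\k[b_1^\pm]$ is free over $\k$, taking $C_9/C_3$-invariants commutes with this tensor factorization, so $\SE_2^{0,*}\isom \k[b_1^\pm]\tensor_\k T^{C_3}$.

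Next I would feed in Proposition \ref{prop:c1-d1}, which says that $T^{C_3}$ is free as a module over $S = \k[d_1,d_2,d_3]^{S_3} = \k[s_1,s_2,s_3]$ on the sixteen elements $\{1,\bar{c},\delta,\bar{c}\delta, f_0,\dots,f_5,F_0,\dots,F_5\}$ of Lemma \ref{lem:fbar}. Tensoring with $\k[b_1^\pm]$ over $\k$ then shows $\SE_2^{0,*}$ is free over $S_\k[b_1^\pm] = \k[b_1^\pm][s_1,s_2,s_3]$ on the same sixteen generators, which is the module statement of the corollary.

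It remains to record the trigrading $(p,q,t)$, where $p$ is the $C_9/C_3$-cohomological degree, $q$ the $C_3$-cohomological degree, and $t$ the internal degree. Every generator is $C_9/C_3$-invariant, hence lies in $p = 0$. The $q$-degree is read off from Lemma \ref{lem:inner-coh}: $|b_1| = 2$, $|c_i| = 1$, $|d_i| = 0$. For the internal degree, Conjecture \ref{conj:action} sets $|x_i| = -2$, so each $d_i = x_a x_b x_c$ has $t = -6$; by Lemma \ref{lem:coh-rhobar} (with $n = |x_i| = -2$) each $c_i$ has $t = -2$; and $b_1$, coming from $\hat{H}^*(C_3;\W)$ with $\W$ in internal degree $0$, has $t = 0$. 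Additivity of $(q,t)$ over products, applied to the formulas in Lemma \ref{lem:fbar} together with $\bar{c} = c_1c_2c_3$ and $\delta = (d_1-d_2)(d_2-d_3)(d_3-d_1)$, then produces the table; for example $f_5 = \tr(c_1d_1^2d_2)$ has $(q,t) = (1,\,-2 + 3(-6)) = (1,-20)$, $F_3 = \tr(c_1c_2d_1d_2)$ has $(q,t) = (2,\,-4 + 2(-6)) = (2,-16)$, $\delta$ has $(q,t) = (0,-18)$, and $\bar{c}\delta$ has $(q,t) = (3,-24)$.

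The main obstacle is entirely contained in Proposition \ref{prop:c1-d1}: passing from the generating set of Lemma \ref{lem:fbar} to the assertion that these sixteen elements are an \emph{honest basis} of $T^{C_3}$ over $S$ (no relations) is exactly the deferred \texttt{sage} computation. Granting that, the present argument is pure bookkeeping; the only step that requires a moment's care is verifying that $C_9/C_3$-invariants commute with the $\k[b_1^\pm]\tensor_\k T$ decomposition, which is immediate because $b_1$ is $C_9/C_3$-fixed and $\k[b_1^\pm]$ is $\k$-flat.
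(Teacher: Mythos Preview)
Your proposal is correct and mirrors the paper's own treatment exactly: the paper introduces this corollary with ``We summarize the results, using the grading $|x_i|=-2$'' and gives no separate proof, relying (as you do) on the identification $\SE_2^{0,*}\isom \k[b_1^\pm]\tensor_\k T^{C_3}$ made at the start of Section~\ref{sec:c1-d1} together with Proposition~\ref{prop:c1-d1}. Your degree computations are correct and match the table.
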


\begin{proposition}\label{prop:serre-E2-summary}
There is an isomorphism of $S^*_\k := \k[b_1^\pm,b_2,s_1,s_2,s_3]\tensor \Lambda_\k[a_2]$-modules
\begin{align*}
\SE_2^{*,*} & \isom {S^*_\k\{
1,\bar{c},\delta,\bar{c}\delta,f_0,\dots,f_5,F_0,\dots,F_5 \}\over 
\big( a_2 s_1,b_2 s_1, a_2s_2, b_2 s_2, a_2 \delta, b_2 \delta,
a_2\bar{c}\delta,b_2\bar{c}\delta, a_2 f_i, b_2
f_i,a_2 F_i, b_2F_i \big)}.
\end{align*}
The generators are defined in Lemma \ref{eq:E20} and Lemma \ref{lem:fbar} with
$|b_2| = (2,0,0)$ and $|a_2| = (1,0,0)$, and
$\Delta = s_3$.
\end{proposition}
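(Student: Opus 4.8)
The plan is to obtain this statement by gluing together the two preceding computations, treating the columns $p=0$ and $p>0$ separately. For $p=0$ there is nothing new to do: $\SE_2^{0,*}=\bigl(\hat H^*(C_3;M)\bigr)^{C_3}$ is exactly Corollary~\ref{cor:E20*}, a free module over $S_\k[b_1^\pm]=\k[s_1,s_2,s_3,b_1^\pm]$ on the sixteen classes $1,\bar c,\delta,\bar c\delta,f_0,\dots,f_5,F_0,\dots,F_5$. For $p>0$ I will invoke the standard fact that $H^p(C_9/C_3;N)=\hat H^p(C_9/C_3;N)$ for every $C_9/C_3$-module $N$ and every $p>0$, so that the positive-filtration part of $\SE_2$ coincides with the positive-filtration part of the bigraded ring $\k[b_1^\pm,b_2^\pm,s_3]\tensor\Lambda_\k[a_2,\bar c]$ computed in Proposition~\ref{prop:p>0}. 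Since $a_2$ sits in filtration $1$ and $b_2$ in filtration $2$, this positive-filtration part is the set of $(a_2,b_2)$-multiples of $\k[b_1^\pm,b_2,s_3]\tensor\Lambda_\k[a_2]\{1,\bar c\}$; in particular none of $s_1,s_2,\delta,\bar c\delta,f_i,F_i$ appears there.

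The real work is to check that these two pictures assemble into the claimed presentation over $S^*_\k=\k[b_1^\pm,b_2,s_1,s_2,s_3]\tensor\Lambda_\k[a_2]$. The only elements of $S^*_\k$ not already visible inside $\SE_2^{0,*}$ are $a_2$ and $b_2$, so I need to pin down multiplication by $a_2$ and $b_2$ from $\SE_2^{0,q}$ into $\SE_2^{1,q}$ and $\SE_2^{2,q}$. By $H^*(C_9/C_3;\k)$-linearity, $a_2$ and $b_2$ kill any class of $\SE_2^{0,q}=H^0(C_9/C_3;\hat H^q(C_3;M))$ that is supported on a \emph{free} $\k[C_9/C_3]$-summand of $\hat H^q(C_3;M)$, since $\hat H^{>0}$ of a free module vanishes. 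The module decomposition of Lemma~\ref{lem:inner-coh} tells me precisely which summands are non-free: only the trivial summands $\k\{s_3^i\}$ and $\k\{\bar c\,s_3^i\}$. Thus $s_1=\tr(d_1)$, $s_2=\tr(d_1d_2)$ (and their products with the fixed class $\bar c$), all of $f_i,F_i$ (which are transfers), and $\delta,\bar c\delta$ (which, up to sign, are the difference of transfers $\tr(d_1^2d_2)-\tr(d_1d_2^2)$, possibly times $\bar c$) all land in free summands, so $a_2$ and $b_2$ annihilate them; this produces exactly the relations $a_2s_1=b_2s_1=a_2s_2=b_2s_2=0$ (acting on $1$ and on $\bar c$) together with $a_2g=b_2g=0$ for $g\in\{\delta,\bar c\delta,f_0,\dots,f_5,F_0,\dots,F_5\}$. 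On the complementary trivial summands $a_2$ and $b_2$ act freely via the periodicity isomorphism, while $b_1$ acts invertibly and $s_3$ acts freely everywhere. Feeding this back in, the free $S^*_\k$-module on the sixteen generators modulo exactly these relations has filtration-$0$ part equal to $\SE_2^{0,*}$ (there the listed relations all become vacuous, since they lie in the ideal $(a_2,b_2)$) and positive-filtration part equal to the Proposition~\ref{prop:p>0} answer. The degrees are then read off from Corollary~\ref{cor:E20*} (using the grading $|x_i|=-2$) together with $|b_2|=(2,0,0)$ and $|a_2|=(1,0,0)$ from $\hat H^*(C_9/C_3;\k)=\k[b_2^\pm]\tensor\Lambda_\k[a_2]$, and $\Delta=s_3$ as noted after Proposition~\ref{prop:p>0}.

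The main obstacle I anticipate is the bookkeeping in that middle step: correctly cataloguing which of the sixteen $p=0$ generators lie in free $\k[C_9/C_3]$-summands (so that the relations $a_2g=b_2g=0$ are exactly right, neither too many nor too few), and confirming that the $p>0$ output of Proposition~\ref{prop:p>0} contains nothing beyond $b_1$- and $s_3$-multiples of $1$ and $\bar c$, so that no extra generators or relations are forced. Beyond that the statement is a formal consequence of the two cited results; since we are only describing the bigraded $E_2$-page and not running a spectral sequence, there are no convergence or extension problems to confront here.
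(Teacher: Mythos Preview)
Your proof is correct and follows essentially the same approach as the paper: combine Proposition~\ref{prop:p>0} (for $p>0$, via $H^p=\hat H^p$ there) with Corollary~\ref{cor:E20*} (for $p=0$), and then observe that $s_1,s_2,\delta,\bar c\delta,f_i,F_i$ all arise from free $\k[C_9/C_3]$-summands of $\hat H^*(C_3;M)$, so that $a_2$ and $b_2$ annihilate them. You supply more justification than the paper does---explicitly writing $s_1,s_2,f_i,F_i$ as transfers and $\delta$ as a difference of transfers---but the argument is the same.
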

\begin{proof}
This comes from putting together Proposition \ref{prop:p>0} and Corollary \ref{cor:E20*}. Since $s_1,s_2,\delta,\bar{c}\delta, f_i$, and $F_i$
come from free summands and $(a_2,b_2)\subseteq H^*(C_9/C_3; \k) \isom \k[b_2]\tensor \Lambda_\k[a_2]$ acts trivially on $H^*(C_9/C_3; \k[C_3])$, these generators are killed by $a_2$ and $b_2$.
\end{proof}

\section{Handling the $I$-completion} \label{sec:completion}
Let $M = \Sym(\Ind_{C_3}^{C_9} \bar{\rho})$. 
In this section we calculate the localized Serre spectral sequence $E_2$ term 
$$ \SE_2^{*,*}((\Delta^{-1}M)^\hhat_I) = H^*(C_9/C_3; \hat{H}^*(C_3; (\Delta^{-1} M)^\hhat_I))$$
by relating it to $\SE_2^{*,*}(M) = H^*(C_9/C_3;
\hat{H}^*(C_3; M))$, which we have already computed in Proposition
\ref{prop:serre-E2-summary}. As in the non-completed version, this occurs in two stages: first we compute
$\hat{H}^*(C_3; (\Delta^{-1} M)^\hhat_I)$ in Lemma \ref{lem:completed-C3-coh} by
mimicking the strategy used in Section \ref{sec:p>0} with a basis that is more
suitable to working with $I$. Next, we attack the outer cohomology by
proving a seemingly general statement about cohomology of completions
(Lemma \ref{lem:I_ki}) which nevertheless requires explicit calculations
(Lemma \ref{lem:w-basis}) in order to construct a basis with the required
properties. In both parts, we
use a different basis from the ones used in Sections \ref{sec:p>0}
and \ref{sec:c1-d1}, but the work here does not replace those sections,
as we rely on them for explicit generators and relations for $\SE_2^{*,*}(M)$.
In particular, the calculations in Section \ref{sec:c1-d1} would not be
tractable using the bases chosen in this section.

The following lemma is used to prove Lemma \ref{lem:ML}.

\begin{lemma}[{\cite[Ch. 3, Cor. 1.1]{lubkin}}] \label{lem:lim1}
Let $A$ be a ring and $(C^*_k)_{k\geq 0}$ be an inverse system of chain
complexes of left $A$-modules. Let $C^* = \llim_k C^*_k$. Assume $C^n\to C^n_k$
are epimorphisms for all $k\geq 0$. Then there is a short exact sequence
$$ 0\to {\lim}^1 H^{n-1}(C^*_k)\to H^n(C^*)\to \lim_k H^n(C^*_k)\to 0 $$
for all $n$.
\end{lemma}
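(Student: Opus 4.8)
The plan is to deduce this from the standard $\llim$–${\lim}^1$ six-term exact sequence, applied once at the level of cochain complexes and once at the level of cohomology. Write $\theta_k\colon C^*_{k+1}\to C^*_k$ for the structure maps of the inverse system. The first observation is that the transition maps are surjective: for each $n$ and $k$, the map $C^n\to C^n_k$ factors as $C^n\to C^n_{k+1}\xrightarrow{\theta_k} C^n_k$, so the hypothesis that $C^n\to C^n_k$ is an epimorphism forces $\theta_k\colon C^n_{k+1}\to C^n_k$ to be an epimorphism. Hence each tower $(C^n_k)_k$ has surjective transition maps, is therefore Mittag--Leffler, and satisfies ${\lim}^1_k C^n_k = 0$.

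Next I would build a short exact sequence of cochain complexes. Let $\Theta\colon\prod_k C^*_k\to\prod_k C^*_k$ be the map $(c_k)_k\mapsto(c_k-\theta_k(c_{k+1}))_k$. Since every $\theta_k$ commutes with differentials, $\Theta$ is a map of cochain complexes; by construction $\ker\Theta=\llim_k C^*_k=C^*$, and $\coker\Theta$ is the complex whose degree-$n$ term is ${\lim}^1_k C^n_k=0$ by the previous step. Therefore
$$ 0\longrightarrow C^*\longrightarrow\prod_k C^*_k\overset{\Theta}{\longrightarrow}\prod_k C^*_k\longrightarrow 0 $$
is a short exact sequence of cochain complexes of $A$-modules.

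Finally I pass to cohomology. Arbitrary products are exact in the category of $A$-modules, so $H^n(\prod_k C^*_k)\isom\prod_k H^n(C^*_k)$, and the map induced by $\Theta$ is again $\mathrm{id}$ minus the shift, which I again call $\Theta$. The long exact cohomology sequence of the sequence above reads
$$ \cdots\to\prod_k H^{n-1}(C^*_k)\overset{\Theta}{\to}\prod_k H^{n-1}(C^*_k)\to H^n(C^*)\to\prod_k H^n(C^*_k)\overset{\Theta}{\to}\prod_k H^n(C^*_k)\to\cdots. $$
Splitting this into short exact sequences, the cokernel of $\Theta$ on $\prod_k H^{n-1}(C^*_k)$ is by definition ${\lim}^1_k H^{n-1}(C^*_k)$ and the kernel of $\Theta$ on $\prod_k H^n(C^*_k)$ is $\llim_k H^n(C^*_k)$, which yields the asserted exact sequence.

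The argument is essentially formal, and the only place the hypothesis enters is the surjectivity of the transition maps, which is precisely what makes the right-hand map of the complex-level sequence surjective (equivalently, what makes $({\lim}^1_k C^*_k)$ vanish). The ``main obstacle,'' such as it is, is just to notice and record this reduction cleanly: without surjective transitions one would be left with a longer exact sequence also involving ${\lim}^1_k C^n_k$ in each internal degree, rather than the clean two-term answer.
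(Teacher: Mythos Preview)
Your proof is correct and is the standard Milnor-sequence argument. The paper does not give its own proof of this lemma; it simply cites it from Lubkin, so there is nothing further to compare.
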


\begin{lemma}\label{lem:ML}
We have
$$\hat{H}^*(C_3; (\Delta^{-1} M)^\hhat_I) \isom \lim_k \hat{H}^*(C_3; \Delta^{-1}
M/I^k).$$
\end{lemma}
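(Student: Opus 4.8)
The plan is to identify $\hat{H}^*(C_3;-)$ with the cohomology of an explicit periodic cochain complex and then commute the inverse limit past cohomology using Lemma \ref{lem:lim1}, checking that the relevant $\lim^1$ term vanishes. Concretely, for the cyclic group $C_3 = \an{\gamma}$, Tate cohomology of any $C_3$-module $N$ is computed by the doubly-infinite periodic complex $C^*(N)$ with terms $N$ in every degree and differentials alternating between $1-\gamma$ and $\tr = 1 + \gamma + \gamma^2$ (the two-sided version of the resolution \eqref{eq:margolis}). Applying this with $N = \Delta^{-1}M/I^k$ gives an inverse system of cochain complexes $(C^*_k)_{k\ge 0}$, with $C^* := \llim_k C^*_k = C^*((\Delta^{-1}M)^\hhat_I)$ since $\Delta^{-1}M$ is a finitely generated module over a Noetherian ring and $I$-adic completion is computed levelwise by $\llim_k (\Delta^{-1}M/I^k)$ in each cochain degree.

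First I would observe that each structure map $C^n = (\Delta^{-1}M)^\hhat_I \to C^n_k = \Delta^{-1}M/I^k$ is surjective (it is the canonical projection from the completion), so the hypothesis of Lemma \ref{lem:lim1} is satisfied, yielding for each $n$ a short exact sequence
\begin{equation*}
0 \to {\lim_k}^1\, \hat{H}^{n-1}(C_3; \Delta^{-1}M/I^k) \to \hat{H}^n(C_3; (\Delta^{-1}M)^\hhat_I) \to \lim_k \hat{H}^n(C_3; \Delta^{-1}M/I^k) \to 0.
\end{equation*}
It then remains to show the $\lim^1$ term vanishes. The cleanest route is a Mittag--Leffler argument: I would show that the pro-system $\{\hat{H}^{n-1}(C_3; \Delta^{-1}M/I^k)\}_k$ is pro-isomorphic to a system with surjective (indeed eventually constant on each internal degree) transition maps. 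Since $\hat{H}^*(C_3;-)$ is $3$-torsion (it is annihilated by $|C_3| = 3$), and since in each fixed internal degree $t$ the modules $\Delta^{-1}M/I^k$ stabilize for $k$ large — because $I \supseteq (3)$ forces high powers of $I$ to kill everything in a bounded internal degree once we are working modulo $3$, and the $x_i - x_{i+1}$ generators shift internal degree — the inverse system $\{\hat{H}^{n-1}(C_3; \Delta^{-1}M/I^k)_t\}_k$ is eventually constant for each $t$, hence Mittag--Leffler, hence has vanishing $\lim^1$.

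The main obstacle is the internal-degree bookkeeping needed to justify the Mittag--Leffler claim: one must verify carefully that in each fixed internal degree, only finitely many powers of $I$ matter, i.e. that $I^k \cdot (\Delta^{-1}M)_t$ stabilizes. This is where the explicit structure from Conjecture \ref{conj:action}/Corollary \ref{cor:conjecture} enters — $\Delta^{-1}M$ is a localization of a polynomial ring modulo a regular sequence, $I = (3, x_0-x_1,\dots,x_7-x_8)$, and each generator of $I$ either kills the module (the $3$) or strictly raises internal degree in absolute value (the $x_i - x_{i+1}$, each of internal degree $-2$). Combined with the finite-dimensionality of $\Delta^{-1}M$ over $\W$ in each internal degree (which follows once $\Delta$ is inverted, as the $d_i$ become units), this gives the needed stabilization. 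I would also note as an alternative, if one prefers to avoid degreewise arguments, that one can instead cite that $\hat H^{n-1}(C_3; \Delta^{-1}M/I^k)$ is a finitely generated module over the Noetherian ring $(\Delta^{-1}M/I)^{C_3}$ (or over $\W/3$) and that the transition maps are surjective modulo a fixed power of the maximal ideal, again forcing Mittag--Leffler; but the degreewise version is the most transparent given the grading is already being tracked throughout.
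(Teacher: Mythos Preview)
Your overall strategy---realize Tate cohomology as the cohomology of the periodic complex, apply Lemma~\ref{lem:lim1}, and then verify Mittag--Leffler---is exactly what the paper does. The gap is in your justification of the Mittag--Leffler condition.

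You claim that for fixed internal degree $t$ the tower $\{(\Delta^{-1}M/I^k)_t\}_k$ (or its Tate cohomology) is eventually constant, and you justify this by saying the generators $x_i-x_{i+1}$ of $I$ shift internal degree while $\Delta^{-1}M$ is finite over $\W$ in each degree. Both assertions are false once $\Delta$ is inverted. Since $|\Delta|=-18$, the module $(\Delta^{-1}M)_t$ is a colimit $\colim_n M_{t-18n}$ along multiplication by $\Delta$ and is \emph{not} finitely generated over $\W$; and for any $k$ one can produce nonzero elements of $(I^k\cdot\Delta^{-1}M)_t$ by multiplying a product of $k$ of the $z_i=x_i-x_{i+1}$ (of degree $-2k$) against something of degree $t+2k$, which exists because all degrees are inhabited after localization. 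Consequently the tower is strictly increasing in each internal degree---indeed the limit computed later in the paper (Lemma~\ref{lem:completed-C3-coh}) is a genuine completion, infinite-dimensional over $\k$ in each degree---so ``eventually constant'' cannot hold.

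What \emph{is} true, and what the paper uses, is the weaker Mittag--Leffler statement that for fixed $k$ the images from higher stages stabilize. The paper gets this by passing through the unlocalized object: $H^*(C_3;\Delta^{-1}M/I^k)\cong H^*(C_3;M/I^k)[\Delta^{-1}]$, and $(M/I^k)_t$ is finite (a quotient of the finitely generated $\W$-module $M_t$, and $3^k$-torsion). One then checks that $(\Delta^{-1}M/I^k)_t$ is itself finite---e.g.\ because $\Delta^{-1}M/I^k$ is finitely filtered with subquotients that are finitely generated over $\Delta^{-1}M/I\cong\k[x^\pm]$, which is one-dimensional in each degree---so the decreasing chain of images inside the finite group $\hat H^{n-1}(C_3;\Delta^{-1}M/I^k)_t$ must stabilize. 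Replace your ``eventually constant'' paragraph with this finiteness argument and the proof goes through.
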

\begin{proof}
Applying Lemma \ref{lem:lim1} to the free $\W[C_9]$-resolution for
$M[\Delta^{-1}]/I^k$, we have a short exact sequence
$$ 0\to {\lim_k}^1 H^{n-1}(C_3; M[\Delta^{-1}]/I^k) \to H^n(C_3;
M[\Delta^{-1}]^\hhat_I) \to \lim_k H^n(C_3; M[\Delta^{-1}]/I^k) \to 0. $$
To show the $\lim^1$ term vanishes, we show it satisfies the Mittag-Leffler
condition; that is, for fixed $k$, the images
$$ H^*(C_3; M[\Delta^{-1}]/I^{k+\ell}) \to H^*(C_3;
M[\Delta^{-1}]/I^k) $$
stabilize. This is satisfied because
$$ H^*(C_3; M[\Delta^{-1}]/I^k)\isom H^*(C_3; (M/I^k)[\Delta^{-1}]) \isom
H^*(C_3; M/I^k)[\Delta^{-1}],$$
the quotient map takes $\Delta$-towers to $\Delta$-towers,
and $H^*(C_3; M/I^k)$ is finite in a fixed internal degree.
\end{proof}

We will need the following lemma in order to be able to use Lemma \ref{lem:C3-kunneth} in an analogous way to its use in Lemma \ref{lem:coh-rhobar}.
\begin{lemma}\label{lem:tower-comparison}
Suppose $I_i$ is an ideal of $A_i$ for $i = 1,\dots,n$. Write $A = A_1\tensor
\dots \tensor A_n$ and consider $I = I_1 + \dots + I_n$ (viewing $I_i$ as an
ideal of $A$ via the embedding $x \mapsto 1\tensor \dots \tensor x \tensor \dots
\tensor 1$). For any functor $F:\Ab\to \Ab$, 
$$ \lim_k F(A/I^k) \isom \lim_k F(A_1/I_1^k \tensor \dots \tensor A_n/I_n^k). $$
\end{lemma}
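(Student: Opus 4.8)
The plan is to show that the two inverse systems of abelian groups $\big(A/I^k\big)_k$ and $\big(A_1/I_1^k\tensor\dots\tensor A_n/I_n^k\big)_k$ are pro-isomorphic, and then to observe that applying a functor and passing to $\lim_k$ preserves pro-isomorphism. Crucially, the pro-isomorphism is purely a statement about the ideals $I, I_1,\dots,I_n$ and is established before $F$ enters the picture, so no exactness or additivity hypothesis on $F$ is needed.

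First I would set $J_k := I_1^k A + \dots + I_n^k A$, the sum inside $A$ of the ideals generated by the $I_i^k$. Iterated right-exactness of the tensor product identifies $A/J_k$ with $A_1/I_1^k\tensor\dots\tensor A_n/I_n^k$, compatibly with the evident quotient maps as $k$ varies; so it is enough to compare the tower $(A/I^k)_k$ with $(A/J_k)_k$.

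Next, the two descending chains of ideals interleave. On one hand $J_k\subseteq I^k$, since each $I_i^k A\subseteq I^k$. On the other hand $I^{nk}\subseteq J_k$: a generator of $I^{nk}$ is a product $x_1\cdots x_{nk}$ with $x_j\in I_{m_j}$ for indices $m_1,\dots,m_{nk}\in\{1,\dots,n\}$, and by pigeonhole some index $m$ occurs at least $k$ times, so the product lies in $I_m^k A\subseteq J_k$. These inclusions give, for every $k$, a chain of surjections $A/I^{nk}\to A/J_k\to A/I^k$ compatible with all the structure maps, which is exactly a pro-isomorphism between $(A/I^k)_k$ and $(A/J_k)_k$.

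Finally, applying $F$ to every map above — using only functoriality — yields compatible families $F(A/I^{nk})\to F(A/J_k)\to F(A/I^k)$, and the interleaving identities pass through $F$, so $\big(F(A/I^k)\big)_k$ and $\big(F(A/J_k)\big)_k$ are still pro-isomorphic. Passing to inverse limits, the induced maps $\lim_k F(A/J_k)\to\lim_k F(A/I^k)$ and $\lim_k F(A/I^k)\isom\lim_k F(A/I^{nk})\to\lim_k F(A/J_k)$ (the middle isomorphism by cofinality of the subsequence $(nk)_k$) are mutually inverse, and composing with the identification $F(A/J_k)\isom F(A_1/I_1^k\tensor\dots\tensor A_n/I_n^k)$ from the first step gives the claim. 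There is no real obstacle; the only subtlety is the pigeonhole inclusion $I^{nk}\subseteq\sum_i I_i^k A$ together with the discipline of applying the arbitrary functor $F$ only after the pro-isomorphism has been constructed.
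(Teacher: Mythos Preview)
Your proof is correct and follows essentially the same approach as the paper: define $J_k = I_1^k + \dots + I_n^k$, use the interleaving $I^{nk}\subseteq J_k\subseteq I^k$ to get a pro-isomorphism of towers, and then apply $F$ and pass to the limit. You supply more detail than the paper does (the pigeonhole justification for $I^{nk}\subseteq J_k$, the right-exactness argument for $A/J_k\cong A_1/I_1^k\tensor\cdots\tensor A_n/I_n^k$, and the cofinality step), but the argument is the same.
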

\begin{proof}
Let $J_k = I_1^k + \dots + I_n^k$, so that $A/J_k \isom A_1/I_1^k \tensor
\dots\tensor A_n/I_n^k$. We have
$$ I^{nk} \subseteq J_k \subseteq I^k $$
and so the systems given by $\{ J_k \}$ and $\{ I^k \}$ are equivalent.
This gives a map between the towers $\{ A/J_k \} \to \{ A/I^k \}$, and hence a
map of towers $\{ F(A/J_k) \}\to \{ F(A/I^k) \}$ that is an isomorphism in the
limit.
\end{proof}

Recall 
$$M \isom \W[x_0,\dots,x_8]/(x_0+x_3+x_6, x_1+x_4+x_7, x_2+x_5+x_8) $$
where $C_9$ permutes the generators, $I = (3,x_0-x_1,\dots,x_7-x_8)$ and $\Delta
= \prod_{i=0}^8x_i$.
Make the following change of coordinates:
\begin{align*}
x & = x_0
\\z_1  & = x_1-x_0
\\z_2 & = x_4-x_3
\\z_3 & = x_3-x_0
\\z_4 & = x_2-x_0
\\z_5 & = x_5-x_3.
\end{align*}
This implies
\begin{align*}
x_6 & = -x_0 - x_3 = -2x - z_3
\\x_7  & = -x_1-x_4 = - 2x-z_1-z_2 - z_3 
\\x_8  & = -x_2 - x_5 = - 2x-z_3 - z_4-z_5.
\end{align*}
In this basis we have $I = (3,z_1,\dots,z_5)$ and
$$ \Delta = x(x+z_1)(x+z_4)(x+z_3)(x+z_2+z_3)(x+z_3+z_5)(-2x-z_3)(-2x - z_1-z_2-z_3)(-2x-z_3-z_4-z_5). $$

\begin{lemma} \label{lem:completed-C3-coh-part1}
Let $M_1 = \W[z_1,z_2]$, $M_2 = \W[x, z_3]$, $M_3 = \W[z_4,z_5]$, $I_1 =
(3,z_1,z_2)\subseteq M_1$, $I_2 = (3,z_3)\subseteq M_2$, and $I_3 =
(3,z_4,z_5)\subseteq M_3$.
Then
$$ \hat{H}^*(C_3; (\Delta^{-1}M)^\hhat_I) \isom \lim_k \big(
\hat{H}^*(C_3; M_1/I_1^k) \tensor d_1^{-1}\hat{H}^*(C_3; M_2/I_2^k)\tensor
\hat{H}^*(C_3; M_3; I_3^k) \big) $$
where $d_1 = x_0x_3x_6$.
\end{lemma}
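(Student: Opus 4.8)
The plan is to combine the identification of $\hat H^*(C_3; (\Delta^{-1}M)^\hhat_I)$ as an inverse limit (Lemma \ref{lem:ML}) with the tensor decomposition of $M$ in the new coordinates, and then push a K\"unneth formula through the limit. First I would use Lemma \ref{lem:ML} to write $\hat H^*(C_3; (\Delta^{-1}M)^\hhat_I) \isom \lim_k \hat H^*(C_3; \Delta^{-1}M/I^k)$, so the whole statement becomes a claim about the uncompleted quotients $\Delta^{-1}M/I^k$ together with a passage to the limit. Next I would observe that in the $z$-coordinates, $M \isom M_1 \tensor_\W M_2 \tensor_\W M_3$ as $\W[C_3]$-modules: the subgroup $C_3 = \an{\gamma^3}$ acts by cyclically permuting the three ``blocks'' of generators, so it acts on $M_1 = \W[z_1,z_2]$, on $M_2 = \W[x,z_3]$, and on $M_3 = \W[z_4,z_5]$ in the way that makes each factor a polynomial ring on which $C_3$ permutes a relevant triple (after re-expressing $x_6,x_7,x_8$ via the relations); here $d_1 = x_0x_3x_6$, $d_2 = x_1x_4x_7$, $d_3 = x_2x_5x_8$ sit in $M_2$ and are permuted by $\gamma^3$, which is why the localization $\Delta^{-1}$ only touches the middle factor, becoming $d_1^{-1}$ after also noting $\Delta$ differs from $d_1d_2d_3$ by a unit-type adjustment modulo $I$.

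The technical heart is to commute everything past the limit. For fixed $k$, I would replace $I^k$ by $J_k = I_1^k + I_2^k + I_3^k$ using Lemma \ref{lem:tower-comparison} (with $F = \hat H^*(C_3; \Delta^{-1}(-))$, or more precisely applied after localizing), so that $\Delta^{-1}M/J_k \isom (M_1/I_1^k) \tensor d_1^{-1}(M_2/I_2^k) \tensor (M_3/I_3^k)$ as $\W[C_3]$-modules. Then I would apply the K\"unneth isomorphism of Lemma \ref{lem:C3-kunneth} — first checking its hypotheses: the modules $M_i/I_i^k$ are finite over $\W/3^{?}$ but need not be torsion-free, so I would use the $R = \k$ case of Lemma \ref{lem:C3-kunneth} after reducing mod $3$, or more carefully filter $M_i/I_i^k$ by powers of $3$ so that the associated graded pieces are $\k[C_3]$-modules and the K\"unneth formula applies levelwise (this is the same device used implicitly throughout: every $\k[C_3]$-module is a sum of shifts of the trivial module in the stable category). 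This yields $\hat H^*(C_3; \Delta^{-1}M/J_k) \isom \hat H^*(C_3; M_1/I_1^k) \tensor_{\hat H^*(C_3;\k)} d_1^{-1}\hat H^*(C_3; M_2/I_2^k) \tensor_{\hat H^*(C_3;\k)} \hat H^*(C_3; M_3/I_3^k)$, and taking $\lim_k$ gives the claimed formula (the $d_1^{-1}$ localization commuting with the limit since, as in Lemma \ref{lem:ML}, the quotient maps take $d_1$-towers to $d_1$-towers and the cohomology is finite in each internal degree).

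The main obstacle I expect is the bookkeeping around torsion and completion interacting with the K\"unneth formula: Lemma \ref{lem:C3-kunneth} is stated for torsion-free $\W[C_3]$-modules or for $\k[C_3]$-modules, but $M_i/I_i^k$ is neither a $\k$-module nor torsion-free, so one cannot apply it directly at finite level. The cleanest fix is probably to do the K\"unneth computation only after reducing the whole tower modulo $3$ (or along the $3$-adic filtration) and to argue that this loses no information in the limit because $I$ already contains $3$ and the modules involved are finitely generated; alternatively, one filters $\Delta^{-1}M/I^k$ by the $3$-adic filtration, applies the $\k[C_3]$ K\"unneth formula to the associated graded, and checks there is no extension problem in Tate cohomology (again because over $\k[C_3]$ everything splits as shifted trivial modules, so the relevant spectral sequences degenerate). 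A secondary subtlety is verifying that $\Delta$ and $d_1 d_2 d_3$ generate the same localization on $M/I^k$ — i.e.\ that the remaining six factors of $\Delta$ become units modulo $I^k$ once $x$ is inverted, which follows since each such factor is congruent to a power of $x$ (times a unit in $\W$) modulo $I$, hence modulo $I^k$ it is $x^{\text{unit}}$ plus nilpotent, hence invertible after inverting $x$; this is exactly the reduction that turns the nine-fold localization $\Delta^{-1}$ into the single $d_1^{-1}$ appearing in the statement.
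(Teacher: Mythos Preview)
Your approach is the same as the paper's: Lemma \ref{lem:ML}, the tensor decomposition $M \isom M_1\tensor M_2\tensor M_3$, Lemma \ref{lem:tower-comparison} to swap $I^k$ for $I_1^k+I_2^k+I_3^k$, then K\"unneth. One correction to your first paragraph: $d_2$ and $d_3$ do \emph{not} lie in $M_2$ (they involve all the $z_i$), and $C_3=\langle\gamma^3\rangle$ \emph{fixes} each $d_j$ rather than permuting them; the reduction $\Delta^{-1}\to d_1^{-1}$ instead works exactly as you say in your last paragraph---the factors of $\Delta$ other than those of $d_1 = x(x+z_3)(-2x-z_3)$ all have the form $x+z$ with $z\in I$, hence become units in $d_1^{-1}M/I^k$ by a geometric series, which is precisely the paper's argument. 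Your concern about the hypotheses of Lemma \ref{lem:C3-kunneth} (the $M_i/I_i^k$ are neither $\k$-modules nor torsion-free over $\W$) is legitimate; the paper simply invokes the lemma without comment, so your proposed $3$-adic filtration workaround is, if anything, more careful than what appears there.
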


\begin{proof}
We have a $\W[C_3]$-module decomposition of the homological
degree-1 part of $M$ into three 2-dimensional indecomposable modules
$$ \W\{ x_0,\dots,x_5 \} \isom \W\{ z_1,z_2 \} \dsum \W\{ x,z_3 \} \dsum \W\{ z_4,z_5 \} $$
which gives an isomorphism of $C_3$-modules $M \isom M_1 \tensor M_2 \tensor M_3$.
Note also $I \isom I_1+I_2+I_3$.

Next we show that
$$ (\Delta^{-1}M)/I^k \isom (d_1^{-1} M)/I^k $$
for all $k$.
Observe that $\Delta$ can be written as a product of $d_1 = x(x+z_3)(-2x-z_3)$ with factors of
the form $x+z$ where $z\in I$.
To show that the other factors are invertible in the quotient, write
\begin{align*}
{1\over x+z} & = x^{-1}\cdot {1\over 1+ x^{-1} z} = x^{-1}\big(1- x^{-1} z + (x^{-1} z)^2
 - \dots \big) \in M^\hhat_I
 \\ & \equiv x^{-1}\big(1- x^{-1} z + (x^{-1} z)^2
 - \dots + (-1)^{k-1} (x^{-1} z)^{k-1}\big) \in M/I^k
\end{align*}
and hence $x+z$ is invertible in $M/I^k$.

We have
\begin{align*}
\hat{H}^*(C_3; (\Delta^{-1}M)^\hhat_I) 
& \isom \lim_k \hat{H}^*(C_3; \Delta^{-1}M/I^k)
\\ & \isom \lim_k \hat{H}^*(C_3; \Delta^{-1}(M_1\tensor M_2\tensor M_3)/I^k)
\\& \isom \lim_k \hat{H}^*(C_3; (M_1\tensor d_1^{-1}M_2\tensor M_3)/I^k)
\\ & \isom \lim_k \hat{H}^*(C_3; M_1/I_1^k \tensor d_1^{-1} M_2/I_2^k \tensor M_3/I_3^k)
\\ & \isom \lim_k \big(\hat{H}^*(C_3; M_1/I_1^k) \tensor \hat{H}^*(C_3;
d_1^{-1}M_2/I_2^k) \tensor \hat{H}^*(C_3; M_3/I_3^k)\big)
\end{align*}
where the first isomorphism is by Lemma \ref{lem:ML}, the fourth isomorphism
is by Lemma \ref{lem:tower-comparison}, and the last isomorphism is the K\"unneth isomorphism (Lemma \ref{lem:C3-kunneth}).
Finally, use the fact that localization commutes with cohomology.
\end{proof}

%\begin{lemma}
%Let $\delta_2 = x(\gamma x)(\gamma^2 x) = x(x+z_3)(-2x-z_3)$.
%Then
%\begin{align*}
%\Delta^{-1}\big(
%\hat{H}^*(C_3; A_1/I_1^k)  & \tensor \hat{H}^*(C_3; A_2/I_2^k)\tensor
%\hat{H}^*(C_3; A_3/I_3^k) \big)
%\\ & \isom 
%\hat{H}^*(C_3; A_1/I_1^k) \tensor \delta_2^{-1}\hat{H}^*(C_3; A_2/I_2^k)\tensor
%\hat{H}^*(C_3; A_3; I_3^k)
%\\\Big(\Delta^{-1}\big(
%\hat{H}^*(C_3; A_1)  & \tensor \hat{H}^*(C_3; A_2)\tensor
%\hat{H}^*(C_3; A_3) \big)\Big)^\hhat_{J_1 + J_3}
%\\ & \isom 
%\big(\hat{H}^*(C_3; A_1/I_1^k) \tensor \delta_2^{-1}\hat{H}^*(C_3; A_2/I_2^k)\tensor
%\hat{H}^*(C_3; A_3; I_3^k)\big)^\hhat_{J_1 + J_3}
%\end{align*}
%where $J_1 = I_1 \ints \hat{H}^*(A_1)$ and $J_3 = I_3\ints \hat{H}^*(A_3)$.
%\end{lemma}
%\begin{proof}
%Suppose $z\in I$. Then
%\begin{align*}
%{1\over x+z} & = x^{-1}\cdot {1\over 1+ x^{-1} z} = x^{-1}\big(1- x^{-1} z + (x^{-1} z)^2
%- (x^{-1} z)^3 + \dots\big)
%\end{align*}
%so 
%\end{proof}

The calculation of the three tensor factors in Lemma
\ref{lem:completed-C3-coh-part1} are similar so we unify their proofs.
\begin{lemma}\label{lem:xy}
Suppose $\W\{x,y\}$ has $C_3$-action given by $\gamma(x) = y$, $\gamma(y) =
-x-y$ where $C_3 = \an{\gamma}$. Let $\delta = xy(x+y)$.
If $I = (3,x,y)$, then $$ \lim_k\hat{H}^*(C_3; \W[x,y]/I^k)\isom \hat{H}^*(C_3;
\W[x,y])^\hhat_\delta. $$
If $I = (3,y-x)$, then
$$ \lim_k \delta^{-1}\hat{H}^*(C_3; \W[x,y]/I^k) \isom \delta^{-1}\hat{H}^*(C_3; \W[x,y]). $$
\end{lemma}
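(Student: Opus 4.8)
Here is the approach I would take.

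\medskip

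\textbf{Setup and a sub-lemma.} Write $A := \W[x,y]$ and $H := \hat H^*(C_3;A)$. Identifying $A \isom \W[t_0,t_1,t_2]/(t_0+t_1+t_2)$ via $t_0 = x$, $t_1 = y$, $t_2 = -x-y$, the group $C_3$ permutes the $t_i$ and $\delta = -t_0t_1t_2$, so Lemma \ref{lem:coh-rhobar} gives $H \isom (\W/3)[b^\pm,\delta]\tensor_\W\Lambda_\W[c]$, with $\delta$ a non-zero-divisor of polynomial degree $3$ and $c$ of polynomial degree $1$; in particular the polynomial degrees occurring in $H$ are exactly those $\equiv 0,1\pmod 3$, and the submodule of $H$ spanned by classes of polynomial degree $\geq 3j$ is precisely $\delta^jH$. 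Write $A = \bigoplus_m A_m$ by polynomial degree, each $A_m$ a $\W[C_3]$-module that is finite and free over $\W$, so that $H = \bigoplus_m \hat H^*(C_3;A_m)$. The sub-lemma I would isolate is: for any $\W[C_3]$-module $V$ finite and free over $\W$, $\lim_\ell \hat H^*(C_3;V/3^\ell V)\isom \hat H^*(C_3;V)$ with vanishing $\lim^1$. This follows from the short exact sequences $0\to \hat H^*(C_3;V)\to \hat H^*(C_3;V/3^\ell V)\to \hat H^{*+1}(C_3;V)\to 0$ coming from $0\to V\xrightarrow{\cdot 3^\ell}V\to V/3^\ell V\to 0$ (using that $\hat H^*(C_3;V)$ is $3$-torsion), together with the fact that the transition maps of the tower act by multiplication by $3$ on the ``$\hat H^{*+1}$'' summand and hence annihilate it, so the tower is pro-isomorphic to the constant tower $\hat H^*(C_3;V)$.

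\medskip

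\textbf{The first statement.} Here $I=(3,x,y)$ is the graded maximal ideal, so $I^k\cap A_m = 3^{\max(k-m,0)}A_m$ and $A/I^k = \bigoplus_{m<k} A_m/3^{k-m}A_m$, which is a \emph{finite} group (finitely many $m$, each $A_m/3^{k-m}A_m$ finite since $\W$ has finite residue field). Thus $\hat H^*(C_3;A/I^k) = \bigoplus_{m<k}\hat H^*(C_3;A_m/3^{k-m}A_m)$, the tower over $k$ consists of finite groups, and applying the sub-lemma internal-degree by internal-degree yields $\lim_k\hat H^*(C_3;A/I^k)\isom \prod_m \hat H^*(C_3;A_m)$ --- the direct sum becomes a product because for each $k$ only the summands with $m<k$ contribute, but there is no uniform bound on $m$. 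This is the completion of $H = \bigoplus_m\hat H^*(C_3;A_m)$ for the polynomial-degree filtration, which, by the description of $H$ above, coincides with the $\delta$-adic filtration; hence $\lim_k\hat H^*(C_3;A/I^k)\isom H^\hhat_\delta = \hat H^*(C_3;\W[x,y])^\hhat_\delta$.

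\medskip

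\textbf{The second statement and the main obstacle.} Since $I=(3,y-x)$ is homogeneous, localization at $\delta$ is exact and commutes with $\hat H^*(C_3;-)$, so I reduce to identifying $\lim_k\hat H^*\bigl(C_3;(\delta^{-1}A)/I^k(\delta^{-1}A)\bigr)$. As $x\mid\delta$, the element $x$ becomes a unit in $\delta^{-1}A$; putting $u = y/x$ gives $\delta^{-1}A\isom \W[x^\pm,u^\pm,(1+u)^{-1}]$ with $\gamma(x) = ux$, $\gamma(u) = -1-u^{-1}$, and $I\delta^{-1}A = (3,u-1)$. The point is that the fixed-point ideal $(\gamma(x)-x,\ \gamma(u)-u) = (x(u-1),\ u^2+u+1) = (u-1,\ u^2+u+1)$ equals $(3,u-1)$, because $u^2+u+1 = (u-1)^2$ in characteristic $3$. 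Consequently the $C_3$-action on $\operatorname{Spec}(\delta^{-1}A)$ is free away from $(3,u-1)$, so $\delta^{-1}A$ becomes a free (hence cohomologically trivial) $\W[C_3]$-module after inverting $3$, and also after inverting $(u-1)(\gamma u-1)(\gamma^2 u-1)$; therefore $\hat H^*(C_3;\delta^{-1}A) = \delta^{-1}H$ is $(3,u-1)$-power-torsion, hence unchanged by completion along $(3,u-1)$. Combining this with a Mittag--Leffler/Artin--Rees argument via Lemma \ref{lem:lim1} --- using the $C_3$-stable $x$-grading, the finiteness of each graded piece of $\delta^{-1}A/I^k\delta^{-1}A$, and the finite generation of $\hat H^*(C_3;\delta^{-1}A)$ and of the $\hat H^*(C_3;\delta^{-1}A/I^k\delta^{-1}A)$ over the Noetherian ring $(\W/3)[b^\pm,\delta^\pm]$ --- yields $\lim_k\hat H^*(C_3;\delta^{-1}A/I^k\delta^{-1}A)\isom \hat H^*(C_3;\delta^{-1}A) = \delta^{-1}\hat H^*(C_3;\W[x,y])$. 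I expect the genuinely delicate part to be exactly this last combination: controlling the error terms $\hat H^*(C_3;I^k\delta^{-1}A)$ (which are $(3,u-1)$-power-torsion but not obviously zero) well enough to conclude that the natural maps from $\hat H^*(C_3;\delta^{-1}A)$ into the tower are an isomorphism in the limit, resting on the identification of the non-free locus of $\delta^{-1}A$ with $(3,u-1)$.
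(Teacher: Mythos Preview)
Your first case is correct and is cleaner than the paper's treatment. Both decompose $\W[x,y]$, but where the paper splits it into indecomposable $\W[C_3]$-summands (trivial $\W\{\delta^i\}$, two-dimensional $\W\{x\delta^i,y\delta^i\}$, and free pieces) and computes the stabilized image of each summand in the tower, you use only the polynomial grading together with your sub-lemma on $\lim_\ell \hat H^*(C_3;V/3^\ell V)$. This works because $I=(3,x,y)$ is the graded maximal ideal, so $I^k\cap A_m=3^{\max(k-m,0)}A_m$, and the identification of the polynomial-degree filtration on $H$ with the $\delta$-adic one is exactly right.

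For the second case your approach is genuinely different, and the gap you flag is real and not closed by the sketch you give. The freeness of the $C_3$-action on $\mathrm{Spec}(\delta^{-1}A)$ away from $V(3,u-1)$ does show that $\hat H^*(C_3;\delta^{-1}A)$ is $I$-power torsion, hence equal to its own $I$-completion; but this by itself does not identify $\lim_k\hat H^*(C_3;\delta^{-1}A/I^k)$. You would still need to control the terms $\hat H^*(C_3;I^k\delta^{-1}A)$ in the long exact sequence, and your Case-1 sub-lemma no longer applies since the graded pieces $B_n\isom\W[u^\pm,(1+u)^{-1}]$ are not finite over $\W$. The paper sidesteps this entirely by reusing the indecomposable-summand decomposition in both cases. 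The crux for $I=(3,y-x)$ is the two-dimensional summand $S=\W\{x\delta^i,y\delta^i\}$: since $\delta\notin I$, one finds $S/(S\cap I^k)\isom\W/3^k\{x\delta^i,y\delta^i\}/(3^{k-1}(y-x)\delta^i)$, and an explicit $2\times2$ matrix computation of $1-\gamma$ in the basis $\{x,y-x\}$ shows that only the odd class $\epsilon\delta^i$ survives down the tower. The trivial summands $\W\{\delta^i\}$ give $\W/3^k$ with stabilized image $\hat H^*(C_3;\W)\{\delta^i\}$, and the free summands contribute nothing. So the paper trades your conceptual freeness observation for a short explicit computation on each indecomposable type, which closes Case 2 without any abstract limit-comparison machinery.
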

\begin{proof}
An alternate proof of Lemma \ref{lem:coh-rhobar} would include the fact that (using the notation in this
case) there is a $\W[C_3]$-module decomposition of $\W[x,y]$ with
homogeneous summands, such that
the non-free summands of $\W[x,y]$ are the trivial summands $\W\{ \delta_1^i \}$
and the 2-dimensional indecomposable summands $\W\{ x\delta^i, y\delta^i \}$
whose cohomology is generated by $\delta^i$ times the exterior generator $\epsilon \in \hat{H}^1(C_3; \W\{x,y\})$.
Write $\W[x,y] = \dsums_i \W\{ \delta^i \}\dsum \dsums_i \W\{
x\delta^i,y\delta^i \}\dsum F$ where $F$ is the free component.
For each summand $S$ in the above decomposition, define
$$B_k(S) = \Im\big( \dots \to \hat{H}^*(C_3; S/(I^{k+2}\ints S)) \to \hat{H}^*(C_3;
S/(I^{k+1}\ints S))\to \hat{H}^*(C_3; S/(I^k\ints S))\big). $$
Using the fact that $I^k = \dsums_S (S\ints I^k)$,
we have that $\lim_k \dsums_S B_k(S) \isom \lim_k \dsums_S \hat{H}^*(C_3; S/(I^k\ints S)) \isom
\lim_k \hat{H}^*(C_3; \W[x,y]/I^k)$.

%If $F_i\isom \W[C_3]$ is a component of $F$ and $k \geq 0$, then there exists
%$\ell\geq 0$ such that
%$S_i:=F_i/(I^{k+\ell}\ints F_i)$ satisfies $S_i/3 \isom \k[C_3]$; however, the generators of $S_i$ may have different
%3-torsion. 
Suppose $F_i\isom \W[C_3]$ is a component of $F$ and $k \geq 0$. If $I = (3,x,y)$, then $F_i/(I^k\ints F_i)\isom \W/3^j[C_3]$ for some $j$. If $I = (3,y-x)$, then one can show that the chain $\dots\to F_i/(I^{k+1}\ints F_i\to F_i/(I^k \ints F_i))$ used to define $B_k(F_i)$ factors through $\W/3^j[C_3]$ for some $j$. In both cases, we can thus use the fact that $\hat{H}^*(C_3; \W/3^j[C_3])=0$. Thus we will ignore free summands in the calculation that follows.

It is also worth observing that if $S \isom \W\{ 1 \}$ (where $1\notin I$ has
trivial action), we have $\hat{H}^*(C_3; \W/3^k) \isom \hat{H}^*(C_3; \k)
\isom \k[a,b^\pm]/a^2$, but the odd-degree class $a$ is generated by $3^{k-1}\in
\W/3^k$ in the minimal resolution \eqref{eq:margolis}, and so $a\mapsto 0$ in
the chain defining $B_k(S)$. Thus
\begin{equation}\label{eq:W(1)} B_k(\W\{ 1 \}) \isom \k[b^\pm] \isom \hat{H}^*(C_3; \W).
\end{equation}

\emph{Case 1: $I = (3,x,y)$.}
We will show that
$$ \dsums_S B_{3n}(S) \isom \hat{H}^*(C_3; \W)[\delta,\epsilon]/(\delta^n,
\epsilon^2). $$
%Note, e.g. using Lemma \ref{lem:margolis}, that $\hat{H}^*(C_3; \k) \isom
%\hat{H}^*(C_3; \W/3^k)$ for any $k$.
If $S = \W\{ \delta^i \}$, then 
$$ S/(I^k\ints S) =  \begin{cases} \W/3^{k-3i}\{ \delta^i \} & 3i< k\\0 & \text{otherwise} \end{cases}$$
and so
$$ \hat{H}^*(C_3; S/(I^k\ints S)) = \begin{cases} \hat{H}^*(C_3; \W/3^{k-3i})\{ \delta^i \} & 3i < k\\0 & \text{otherwise}.\end{cases} $$
By \eqref{eq:W(1)},
$$ B_k(S) \isom \begin{cases} \hat{H}^*(C_3; \W)\{ \delta^i \}  & 3i<k
\\ 0 & \text{otherwise}. \end{cases}$$
If $S = \W\{ x\delta^i, y\delta^i \}$, then
$$ S/(I^k\ints S) =  \begin{cases} \W/3^{k-(3i+1)}\{ x\delta^i, y\delta^i \} & 3i+1< k\\0 & \text{otherwise} \end{cases}$$
and so
$$ \hat{H}^*(C_3; S/(I^k\ints S)) = \begin{cases} \hat{H}^*(C_3; \W/3^{k-(3i+1)})\{ \epsilon\delta^i \}
& 3i+1 < k\\0 & \text{otherwise}\end{cases} $$
and similarly
$$ B_k(S) = \begin{cases} \hat{H}^*(C_3; \W)\{ \epsilon\delta^i \} & 3i+1<k\\
0 & \text{otherwise.}
\end{cases} $$
Taking the limit, we have $\lim_k B_k(S) \isom \hat{H}^*(C_3;
\W)[[\delta]][\epsilon]/(\epsilon^2) \isom (\hat{H}^*(C_3;
\W[x,y]))^\hhat_\delta$ using Lemma
\ref{lem:coh-rhobar}.

\emph{Case 2: $I = (3,y-x)$.} The main difference from Case 1 is that
$\delta\notin I$. We will show that
$$ \dsums_S B_k(S) \isom \hat{H}^*(C_3; \W)[\delta,\epsilon]/(\epsilon^2). $$
If $S = \W\{ \delta^i \}$, then 
$S/(I^k\ints S) \isom \W/3^k\{ \delta^i \}$ and so $B_k(S) = \hat{H}^*(C_3;
\W)\{ \delta^i \}$ similarly to Case 1.
If $S = \W\{ x\delta^i, y\delta^i \}$, then $S/(S\ints I^k) = \W/3^k\{
x\delta^i,y\delta^i \}/(3^{k-1}(y-x))$. To calculate $\hat{H}^*(C_3; S/(I^k\ints S))$ in this case, use the minimal
resolution from \eqref{eq:margolis}: $\gamma$ has matrix $\begin{bmatrix} 1 & -3\\1 & -2 \end{bmatrix}$ with respect to the basis $\{ x\delta^i, (y-x)\delta^i \}$, so we have $1+\gamma+\gamma^2 = 0$ and
$1-\gamma$ has matrix $\begin{bmatrix} 0 & -3\\1 & -3 \end{bmatrix}$. The resolution
$$ \dots\too{0} \W/3^k\{ x\delta^i \} \dsum \W/3^{k-1}\{ (y-x)\delta^i \}
\too{1-\gamma} \W/3^k\{ x\delta^i \} \dsum
\W/3^{k-1}\{ (y-x)\delta^i \} \too{0} \dots $$
has even cohomology $\ker(1-\gamma)$ generated by $(3^{k-1}x +
3^{k-2}(y-x))\delta^i$,
and odd cohomology $(\W/3^k\{ x\delta^i \}\dsum \W/3^{k-1}\{ (y-x)\delta^i \})/\Im(1-\gamma)$
generated by $x\delta^i$. Then $\delta^i\epsilon$ is represented by $x\delta^i$ and the other class does
not lift up the chain. Thus we have
$$ B_k(S)\isom \hat{H}^*(C_3; \W)\{ \delta^i\epsilon \} $$
and the result follows as in Case 1.
\end{proof}

\begin{lemma} \label{lem:completed-C3-coh}
We have
\begin{align*}
\hat{H}^*(C_3; (\Delta^{-1}M)^\hhat_I) 
& \isom \Big(\Delta^{-1}\k[b_1^\pm, d_1,d_2,d_3]\tensor
\Lambda_\k[c_1,c_2,c_3]\Big)^\hhat_{(d_1-d_2, d_1-d_3)}.
\end{align*}
\end{lemma}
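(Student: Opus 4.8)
The plan is to combine Lemma \ref{lem:completed-C3-coh-part1} with Lemma \ref{lem:xy} one tensor factor at a time, reassemble using the K\"unneth isomorphism, and then identify the resulting completion ideal. By Lemma \ref{lem:completed-C3-coh-part1},
$$ \hat{H}^*(C_3;(\Delta^{-1}M)^\hhat_I)\isom \lim_k\big(\hat{H}^*(C_3;M_1/I_1^k)\tensor d_1^{-1}\hat{H}^*(C_3;M_2/I_2^k)\tensor\hat{H}^*(C_3;M_3/I_3^k)\big), $$
where $M_1=\W[z_1,z_2]$, $M_2=\W[x,z_3]=\W[x_0,x_3]$, $M_3=\W[z_4,z_5]$, and the generator of $C_3=\an{\gamma^3}$ cyclically permutes $\{z_1,z_2,-z_1-z_2\}$, $\{x_0,x_3,x_6\}$, and $\{z_4,z_5,-z_4-z_5\}$ respectively. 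Each $M_i$ is the symmetric algebra on a $2$-dimensional indecomposable torsion-free $\W[C_3]$-module, so Lemma \ref{lem:xy} applies directly. For $M_1$ and $M_3$ the defining ideals $I_1=(3,z_1,z_2)$ and $I_3=(3,z_4,z_5)$ have the form $(3,x,y)$, so the first case gives $\lim_k\hat{H}^*(C_3;M_i/I_i^k)\isom\hat{H}^*(C_3;M_i)^\hhat_{\delta_i}$ with $\delta_1=z_1z_2(z_1+z_2)$ and $\delta_3=z_4z_5(z_4+z_5)$; for $M_2$ the ideal $I_2=(3,z_3)=(3,x_3-x_0)$ has the form $(3,y-x)$, and $d_1=x_0x_3x_6=-x_0x_3(x_0+x_3)$ is exactly the element ``$\delta$'' of Lemma \ref{lem:xy} in this situation, so the second case gives $\lim_k d_1^{-1}\hat{H}^*(C_3;M_2/I_2^k)\isom d_1^{-1}\hat{H}^*(C_3;M_2)$.

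Next I would pass the limit through the tensor product. In each fixed internal degree all the towers have finite-dimensional terms, so this is routine, and the outcome is the completion of $\hat{H}^*(C_3;M_1)\tensor_{\k[b_1^\pm]}d_1^{-1}\hat{H}^*(C_3;M_2)\tensor_{\k[b_1^\pm]}\hat{H}^*(C_3;M_3)$ at the ideal generated by $\delta_1$ and $\delta_3$ (the middle factor contributes the localization at $d_1$ but \emph{no} completion, since $\delta\notin I$ in the second case of Lemma \ref{lem:xy}). By the K\"unneth isomorphism (Lemma \ref{lem:C3-kunneth}, valid since the $M_i$ are torsion-free) and the compatibility of localization with tensor products, this equals $\big(d_1^{-1}\hat{H}^*(C_3;M)\big)^\hhat_{(\delta_1,\delta_3)}$, and Lemma \ref{lem:inner-coh} identifies $\hat{H}^*(C_3;M)$ with $\k[b_1^\pm,d_1,d_2,d_3]\tensor\Lambda_\k[c_1,c_2,c_3]$, where $d_2=x_1x_4x_7$ and $d_3=x_2x_5x_8$.

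The essential remaining step is to recognize the completion ideal: I claim $(\delta_1,\delta_3)=(d_1-d_2,d_1-d_3)$ inside $d_1^{-1}\hat{H}^*(C_3;M)$. Write $a=x_0$, $b=x_3$, $c=x_6=-a-b$ and $p=z_1$, $q=z_2$, $r=-p-q$, so that $d_1=abc$, $\delta_1=-pqr$, and (using $x_1=a+p$, $x_4=b+q$, $x_7=c+r$) $d_2=(a+p)(b+q)(c+r)$. Expanding and collecting by $\gamma^3$-orbit yields the identity
$$ d_2-d_1+\delta_1 \;=\; \tr(abr)+\tr(aqr), $$
whose right-hand side is a transfer and therefore vanishes in $\hat{H}^0(C_3;M)=M^{C_3}/\Im(\tr)$; hence $\delta_1=d_1-d_2$ there, and the identical computation with $z_4,z_5$ gives $\delta_3=d_1-d_3$. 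Finally, since $d_1$ is already inverted, $d_2=d_1-(d_1-d_2)$ and $d_3=d_1-(d_1-d_3)$ become units after completing at $(d_1-d_2,d_1-d_3)$, so in the completed ring inverting $d_1$ agrees with inverting $\Delta=d_1d_2d_3$, and we obtain the claimed description $\big(\Delta^{-1}\k[b_1^\pm,d_1,d_2,d_3]\tensor\Lambda_\k[c_1,c_2,c_3]\big)^\hhat_{(d_1-d_2,d_1-d_3)}$.

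The main obstacle I anticipate is precisely the orbit-bookkeeping behind the identity $d_2-d_1+\delta_1=\tr(abr)+\tr(aqr)$, and more broadly keeping three coordinate systems consistent: the $z$-coordinates of Lemma \ref{lem:completed-C3-coh-part1}, the ``by orbit'' decomposition of $M$ underlying Lemma \ref{lem:inner-coh}, and the abstract $(x,y)$-coordinates of Lemma \ref{lem:xy}. Everything else is formal.
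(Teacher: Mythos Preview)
Your proposal is correct and follows essentially the same route as the paper's proof: invoke Lemma \ref{lem:completed-C3-coh-part1}, apply the two cases of Lemma \ref{lem:xy} to the three tensor factors, reassemble via K\"unneth into $(d_1^{-1}\hat{H}^*(C_3;M))^\hhat_{(\delta_1,\delta_3)}$, identify $\delta_1=d_1-d_2$ and $\delta_3=d_1-d_3$ in Tate $H^0$, and finally observe that inverting $d_1$ agrees with inverting $\Delta$ after completion. Your explicit verification of $d_2-d_1+\delta_1=\tr(abr)+\tr(aqr)$ is more detailed than the paper's ``one shows that $\delta_1=d_1-d_2$'' and is correct; this is indeed the only step requiring care beyond formal manipulation.
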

\begin{proof}
Starting with the right hand side of Lemma \ref{lem:completed-C3-coh-part1}, apply the first case of Lemma \ref{lem:xy} to $M_1$ and $M_3$ and the second
case to $M_2$. Let $\delta_1 = z_1z_2(z_1+z_2)$, $\delta_2 = x(x+z_3)(2x+z_3)$,
and $\delta_3 = z_4z_5(z_4+z_5)$ be the classes
that appear in this lemma as applied to $M_1$, $M_2$, and $M_3$, respectively. Note
that $\delta_2 = x_0x_3(x_0+x_3) = -x_0x_3x_6 = -d_1$.
Combining Lemma \ref{lem:completed-C3-coh-part1}, Lemma \ref{lem:xy}, and the
K\"unneth isomorphism (Lemma \ref{lem:C3-kunneth}), we have
\begin{align*}
\hat{H}^*(C_3 (\Delta^{-1}M)^\hhat_I) & \isom \hat{H}^*(C_3;
\W[z_1,z_2])^\hhat_{\delta_1} \tensor d_1^{-1}\hat{H}^*(C_3;
\W[x,z_3])\tensor \hat{H}^*(C_3; \W[z_4,z_5])^\hhat_{\delta_3}
\\ & \isom \big(d_1^{-1}(\hat{H}^*(C_3; \W[z_1,z_2])\tensor \hat{H}^*(C_3;\W[x,z_3])\tensor
\hat{H}^*(C_3; \W[z_4,z_5]))\big)^\hhat_{(\delta_1,\delta_3)}
\\ & \isom (d_1^{-1}\hat{H}^*(C_3; M))^\hhat_{(\delta_1,\delta_3)}.
\end{align*}
Recall from Lemma \ref{lem:inner-coh} that $d_1 = x_0x_3(-x_0-x_3)$, $d_2 =
x_1x_4(-x_1-x_4)$, and $d_3 = x_2x_5(-x_2-x_5)$.
One shows that $\delta_1 = d_1-d_2$ and $\delta_3 = d_1-d_3$ in $\hat{H}^0(C_3; M)$
(though these are not equal in $H^0$). We have
$$ (d_1^{-1}\hat{H}^*(C_3;M))^\hhat_{(d_1-d_2, d_1-d_3)} \isom (\Delta^{-1}\hat{H}^*(C_3;M))^\hhat_{(d_1-d_3, d_1-d_3)}  $$
because $\Delta = d_1d_2d_3$ and $d_2 = d_1 + (d_2-d_1)$ and $d_3 = d_1 +
(d_3-d_1)$ are invertible after inverting $d_1$ and completing at
$(d_1-d_2,d_1-d_3)$ by a similar argument as in Lemma \ref{lem:completed-C3-coh-part1}. Finally, plug in the formula for $\hat{H}^*(C_3;
M)$ from Lemma \ref{lem:inner-coh}.
\end{proof}

\begin{lemma}\label{lem:MN}
Suppose $f:M \to N$ is a surjective map where $M$ and $N$ are indecomposable
$\k[C_3]$-modules. If $f$ is not an isomorphism, then it sends the fixed
points of $M$ to zero.
\end{lemma}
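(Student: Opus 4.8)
The plan is to reduce everything to the classification of indecomposable $\k[C_3]$-modules recalled in the proof of Lemma~\ref{lem:C3-kunneth}. Since $\k$ has characteristic $3$, the element $x := \gamma - 1 \in \k[C_3]$ (with $C_3 = \an{\gamma}$) satisfies $x^3 = (\gamma-1)^3 = \gamma^3 - 1 = 0$, so $\k[C_3] \isom \k[x]/(x^3)$ and every indecomposable $\k[C_3]$-module is isomorphic to a single nilpotent Jordan block $V_d := \k[x]/(x^d)$ for some $d \in \{1,2,3\}$ --- namely the trivial module $V_1 = \k$, the two-dimensional indecomposable $V_2 = \k^\sigma$, and the free module $V_3 = \k[C_3]$.

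First I would record two elementary facts, both immediate from this normal form: for an indecomposable $\k[C_3]$-module $M$ of $\k$-dimension $d$ one has $M^{C_3} = \ker(\gamma - 1 \colon M \to M) = (\gamma-1)^{d-1} M$ (a one-dimensional subspace), and $(\gamma-1)^{d} M = 0$. Next I would note that a surjection $f \colon M \to N$ of indecomposables forces $\dim_\k M \geq \dim_\k N$, and that if $f$ is not an isomorphism then in fact $\dim_\k M > \dim_\k N$, because a surjective $\k$-linear map between finite-dimensional vector spaces of equal dimension is bijective. Writing $d = \dim_\k M$, this yields $\dim_\k N \leq d-1$.

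To conclude, I would use that $f$ is $C_3$-equivariant and surjective, so
$$ f\big(M^{C_3}\big) = f\big((\gamma-1)^{d-1} M\big) = (\gamma-1)^{d-1} f(M) = (\gamma-1)^{d-1} N \subseteq (\gamma-1)^{\dim_\k N} N = 0, $$
the last equality by the second fact above applied to $N$ (using $\dim_\k N \le d-1$). Hence $f$ kills $M^{C_3}$. There is no real obstacle here; the only point that deserves a second of care is the identification $M^{C_3} = (\gamma-1)^{\dim_\k M - 1} M$ for indecomposable $M$, which is precisely the statement that a size-$d$ nilpotent Jordan block has kernel equal to the image of its $(d-1)$-st power.
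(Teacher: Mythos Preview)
Your proof is correct and in fact cleaner than the paper's. The paper proceeds by an explicit case analysis on the pair $(M,N)$: when $M=\k[C_3]$ is free and $N$ is the trivial or two-dimensional module, it argues via the induced map on group cohomology, observing that the fixed point in $M$ is $b$-torsion in $H^*(C_3;M)$ while the fixed point in $N$ is $b$-periodic, forcing the image to vanish; when $M$ is two-dimensional and $N$ is trivial, it computes directly that the fixed point $a_1-a_2$ of $M$ maps to zero because $f(a_2)=\gamma f(a_1)=f(a_1)$. Your argument instead passes through the identification $\k[C_3]\isom \k[x]/(x^3)$ and the Jordan normal form, which lets you treat all cases uniformly via the single relation $M^{C_3}=(\gamma-1)^{\dim M-1}M$ together with the dimension drop. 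This is more elementary (no appeal to cohomology) and avoids any case split; the paper's approach, by contrast, stays closer to the ambient language of the section and makes the connection to Tate cohomology transparent, which is perhaps why the authors chose it.
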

\begin{proof}
There are three possible indecomposable $\k[C_3]$ modules: the trivial module
$\k\{ a_1 \}$, the free module $\k\{ a_1, a_2, a_3\}$, and the
2-dimensional indecomposable module $\k\{ a_1,a_2 \}$ where $\gamma(a_1) = a_2$
and $\gamma(a_2) = -a_1-a_2$ (here $\gamma$ is the generator of $C_3$).

\emph{Case 1: $M = \k\{ a_1,a_2,a_3 \}$ and $N = \k\{ a_1 \}$ or $N =
\k\{ a_1,a_2 \}$.} Consider the induced map $f_*$ on group
cohomology. The generator of $H^0(C_3; M)$ is annihilated by the
$C_3$-cohomology periodicity operator $b\in H^2(C_3; \k)$, whereas the
generator of $H^0(C_3; N)$ is $b$-periodic for the modules $N$ under
consideration. Thus $f_*$ sends the generator of $H^0(C_3; M)$ to zero.

\emph{Case 2: $M = \k\{ a_1,a_2 \}$ and $N = \k\{ a_1 \}$.} The $\k$-fixed
points of $M$ are generated by $a_1-a_2$, and since $N$ consists of fixed
points, we have $f(a_2) = f(\gamma(a_1)) = \gamma(f(a_1)) = f(a_1)$, and so
$f(a_1-a_2)=0$.
\end{proof}

\begin{lemma} \label{lem:I_ki}
Suppose a $\k[C_3]$-algebra $A$ has a decomposition $\dsums_i A_i$ into
$\k[C_3]$-indecomposable factors $A_i$ and
$I\subseteq A$ is an ideal that is preserved by the
$C_3$-action. Assume $\intss_{k\geq 0} I^k = 0$. Moreover, assume that, for every $k\in \N$, $I^k$ decomposes as
$\k[C_3]$-modules into $\dsums_i I_{k,i}$ for $\k[C_3]$-submodules $I_{k,i} \subseteq A_i$.
Define a filtration 
$$ F^kH^*(C_3; A) = \dsums_\attop{i\text{ such that}\\(A_i)^{C_3}\subseteq I^k} H^*(C_3; A_i). $$
Then
$$ \lim_k H^*(C_3; A/I^k) \isom \lim_k H^*(C_3; A)/F^kH^*(C_3; A). $$
\end{lemma}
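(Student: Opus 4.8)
The plan is to reduce the statement to a direct comparison of the two towers $\{H^*(C_3; A/I^k)\}_k$ and $\{H^*(C_3;A)/F^kH^*(C_3;A)\}_k$, term by term, using the decomposition into $\k[C_3]$-indecomposable summands. First I would fix $k$ and use the hypothesis that $I^k = \dsums_i I_{k,i}$ with $I_{k,i}\subseteq A_i$, so that $A/I^k \isom \dsums_i A_i/I_{k,i}$; since group cohomology of a finite group commutes with arbitrary direct sums, $H^*(C_3; A/I^k)\isom \dsums_i H^*(C_3; A_i/I_{k,i})$. The task is then to identify which summands $H^*(C_3;A_i/I_{k,i})$ contribute, and to match that with the quotient $H^*(C_3;A)/F^kH^*(C_3;A) \isom \dsums_{i:\,(A_i)^{C_3}\not\subseteq I^k} H^*(C_3;A_i)$.

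Next I would analyze each summand $A_i/I_{k,i}$ using Lemma \ref{lem:MN} applied to the surjection $A_i \to A_i/I_{k,i}$ (after first decomposing $A_i/I_{k,i}$ into its own indecomposables, or more simply observing it is a quotient of an indecomposable). The dichotomy is: either $(A_i)^{C_3}\subseteq I_{k,i}$, in which case the surjection $A_i\to A_i/I_{k,i}$ kills the fixed points, so (for $A_i$ free or 2-dimensional indecomposable) the quotient is again a sum of frees and the nontrivial-periodicity summand dies, so by Lemma \ref{lem:margolis}/the vanishing of $\HH^*(C_3;\W/3^j[C_3])$-type arguments the cohomology is ``small''; or $(A_i)^{C_3}\not\subseteq I_{k,i}$, in which case the map is an isomorphism onto $A_i/I_{k,i}$ modulo a free piece and $H^*(C_3;A_i/I_{k,i})\to$ stabilizes to $H^*(C_3;A_i)$ in the limit. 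Here one must be slightly careful: the precise claim is not that individual terms agree for fixed $k$ but that the \emph{inverse limits} agree. So I would instead argue that the transition maps in the tower $\{H^*(C_3;A_i/I_{k,i})\}_k$ are compatible with those in $\{H^*(C_3;A_i)/F^k\}_k$ via the natural surjections $A_i \to A_i/I_{k,i}$, and that a cofinal comparison of towers (as in the proof of Lemma \ref{lem:xy}, using the $B_k(S)$ construction) shows both limits equal $\dsums_i \bigl(\lim_k H^*(C_3;A_i/I_{k,i})\bigr)$, with each factor being either $H^*(C_3;A_i)$ (when $(A_i)^{C_3}\not\subseteq I^k$ for all $k$, equivalently $(A_i)^{C_3}\neq 0$ in the limit since $\bigcap_k I^k = 0$) or $0$.

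More precisely, I would mimic the $B_k(S)$ bookkeeping from Lemma \ref{lem:xy}: set $B_k(A_i) = \Im\bigl(\dots \to H^*(C_3; A_i/I_{k+1,i}) \to H^*(C_3;A_i/I_{k,i})\bigr)$, so that $\lim_k \dsums_i B_k(A_i) \isom \lim_k H^*(C_3; A/I^k)$. The condition $\bigcap_k I^k = 0$ forces, for each $i$ with $(A_i)^{C_3}\neq 0$, that $(A_i)^{C_3}\not\subseteq I^k$ for some (hence, by nesting, for all large-enough-indexed, and in the relevant graded-finite setting, all) $k$; for such $i$ the transition maps $H^*(C_3;A_i/I_{k+1,i})\to H^*(C_3;A_i/I_{k,i})$ are eventually isomorphisms onto $H^*(C_3;A_i)$ on the non-free part, so $\lim_k B_k(A_i) \isom H^*(C_3;A_i)$. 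For $i$ with $(A_i)^{C_3} = 0$ (i.e. $A_i$ free), $H^*(C_3;A_i/I_{k,i})$ is the Tate-type cohomology of $\W/3^j[C_3]$-like modules, which vanishes, so $\lim_k B_k(A_i) = 0$. Reassembling, $\lim_k H^*(C_3;A/I^k) \isom \dsums_{i:\,(A_i)^{C_3}\neq 0} H^*(C_3;A_i)$, and the right-hand side of the lemma is the same direct sum since $F^k H^*(C_3;A)$ exhausts exactly the summands with $(A_i)^{C_3}\subseteq I^k$, i.e. in the limit those with $(A_i)^{C_3} = 0$.

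The main obstacle I anticipate is the careful handling of the filtration index: the definition of $F^k$ depends on $k$ through the condition $(A_i)^{C_3}\subseteq I^k$, and one must check that the quotient $H^*(C_3;A)/F^kH^*(C_3;A)$ and its transition maps genuinely match the tower $\{H^*(C_3;A_i/I_{k,i})\}$ cofinally, rather than just in some ad hoc pointwise sense — this is where the hypothesis $\bigcap_k I^k = 0$ (guaranteeing the filtration is exhaustive in the relevant sense) and the assumed graded-finiteness of the pieces (so that stabilization happens, as in Lemma \ref{lem:xy}) both get used. A secondary subtlety is that $A_i/I_{k,i}$ need not itself be $\k[C_3]$-indecomposable, so Lemma \ref{lem:MN} must be applied after re-decomposing, and one should note that a quotient of an indecomposable $A_i$ by a submodule either retains its fixed points faithfully or kills them entirely — which is exactly the content of Lemma \ref{lem:MN} read contrapositively.
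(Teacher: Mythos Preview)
Your setup with $B_k(A_i)$ matches the paper's, and your instinct to invoke Lemma~\ref{lem:MN} is right. But the argument goes off the rails in the final step, where you compute both sides of the lemma as $\bigoplus_{i:\,(A_i)^{C_3}\neq 0} H^*(C_3;A_i)$.

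Two things are wrong. First, over $\k$ (characteristic $3$) every nonzero $\k[C_3]$-module has nonzero fixed points, since $\k[C_3]\cong \k[x]/(x^3)$ is local; your condition $(A_i)^{C_3}=0$ is never satisfied and the dichotomy collapses. (The reference to ``$\W/3^j[C_3]$-like modules'' suggests you are importing intuition from Lemma~\ref{lem:xy}, which is over $\W$; here there is no $3$-adic filtration to track.) Second, and more fundamentally, you are interchanging $\lim_k$ with $\bigoplus_i$. For each fixed $i$, finite-dimensionality of $A_i$ together with $\bigcap_k I^k=0$ forces $I_{k,i}=0$ for large $k$, so $\lim_k B_k(A_i)=H^*(C_3;A_i)$ for \emph{every} $i$; summing gives $H^*(C_3;A)$, not its completion. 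Already for $A=\k[d]$ with trivial action, $I=(d)$, $A_n=\k\{d^n\}$, both sides of the lemma are $H^*(C_3;\k)[[d]]$, not $H^*(C_3;\k)[d]$. You are conflating $\lim_k(M/F^k)$ with $M/\bigl(\bigcap_k F^k\bigr)$.

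The paper instead compares the towers \emph{termwise}: it shows $B_{k,i}\cong \pr_{k,i}H^*(C_3;A_i)$ for each fixed $k$ and $i$, so that $\bigoplus_i B_{k,i}\cong H^*(C_3;A)/F^kH^*(C_3;A)$ as towers in $k$, and then the limits agree automatically with no interchange needed. The case analysis is on $I_{k,i}$ for the given $k$: the cases $I_{k,i}=0$ and $I_{k,i}=A_i$ are immediate; when $0\subsetneq I_{k,i}\subsetneq A_i$, one picks $m$ with $I_{k+m+1,i}=0$ and applies Lemma~\ref{lem:MN} to the non-isomorphic surjection $A_i=A_i/I_{k+m+1,i}\to A_i/I_{k,i}$ to conclude that the fixed point of $A_i$ already lies in $I_{k,i}$, whence both $B_{k,i}$ and $\pr_{k,i}H^*(C_3;A_i)$ vanish. (Your worry about re-decomposing $A_i/I_{k,i}$ is unnecessary: $\k[C_3]$ is uniserial, so quotients of indecomposables are indecomposable.)
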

Note that $F^k H^0(C_3; A) = \dsums_i H^0(C_3; A_i)\ints I_{k,i} = H^0(C_3; A) \ints I^k$.
The proof strategy is based on the proof of Lemma \ref{lem:xy}. This version is
more complicated in that it uses $H^*$ instead of $\hat{H}^*$, but less complicated
in that the modules are over $\k$ instead of $\W$.
\begin{proof}
Define 
$$B_{k,i} = \Im\big( \dots \to H^*(C_3; A_i/I_{k+2,i}) \to H^*(C_3; A_i/I_{k+1,i})\to
H^*(C_3; A_i/I_{k,i})\big). $$
Then $\lim_k \dsums_i B_{k,i} \isom \lim_k \dsums_i H^*(C_3; A_i/I_{k,i}) \isom
\lim_k H^*(C_3; A/I^k)$.

By definition we have a decomposition
\begin{align*}
H^*(C_3; A)/F^kH^*(C_3; A) &  \isom \dsums_i H^*(C_3; A_i)\Big/
\dsums_\attop{i\text{ such that}\\(A_i)^{C_3}\subseteq I_{k,i}} H^*(C_3; A_i).
\end{align*}
Write $\pr_{k,i}$ for the projection to the $i^{th}$ summand of $H^*(C_3;A)/F^k H^*(C_3; A)$.
Note that $\pr_{k,i} H^*(C_3; A_i)$ is either 0 or $H^*(C_3; A_i)$.
It suffices to show that
\begin{equation}\label{eq:AB} B_{k,i}\isom \pr_{k,i} H^*(C_3; A_i). \end{equation}

Consider an indecomposable module $A_i$ and a power $k$.
If $I_{k,i} = 0$ then the two sides of \eqref{eq:AB} both equal $H^*(C_3; A_i)$:
for the left hand side, observe that $I_{k+\ell,i} \subseteq I_{k,i}$ for
$\ell\geq 0$ so all the
maps in the chain are the identity on $H^*(C_3; A_i)$.
If $A_i = I_{k,i}$ then the fixed point is in $I_{k,i}$ and both
sides of \eqref{eq:AB} are zero.

The remaining case is that $I_{k,i}$ is a proper nontrivial submodule of $A_i$.
Let
$k+m$ be the largest index where $I_{k+m,i}\neq 0$; this exists by finiteness of
$A_i$ and the condition about $\intss_{k\geq 0}I^k$.
Apply Lemma
\ref{lem:MN} to the map $A_i/I_{k+m+1,i} = A_i\to A_i/I_{k,i}$ to see that the
fixed point $\phi\in A_i$ is in $I_{k,i}$ and
the image of $H^0(C_3; A_i/I_{k+m+1,i})\to H^0(C_3; A_i/I_{k,i})$ is zero.
Thus $\pr_{k,i} H^*(C_3; A_i)= 0$ and $B_{k,i} = 0$ in the case of homological
degree $* = 0$. To see that we also have $B_{k,i}=0$ for other homological
degrees, observe that for the three types of $\k[C_3]$ indecomposable modules
(see the proof of Lemma \ref{lem:MN}), higher cohomology is either zero (for
free modules) or isomorphic to $H^0$.
\end{proof}

\begin{lemma}\label{lem:w-basis}
There is a $\k[C_3]$-module basis for
$$ \k[b_1^\pm, d_1, d_2, d_3]\tensor \Lambda_\k[c_1,c_2,c_3] $$
that satisfies the criteria in Lemma \ref{lem:I_ki} with respect to the ideal $J
= (d_1-d_2, d_1-d_3)$.
\end{lemma}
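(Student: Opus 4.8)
The plan is as follows. Of the hypotheses of Lemma~\ref{lem:I_ki}, two are automatic here: $J = (d_1-d_2,\,d_1-d_3)$ is $C_3$-stable, since $\gamma(d_1-d_2) = d_2-d_3 = (d_1-d_3)-(d_1-d_2)\in J$ and $\gamma(d_1-d_3) = d_2-d_1 = -(d_1-d_2)\in J$; and $\bigcap_k J^k = 0$, since in the variables $u := d_1-d_2$, $v := d_2-d_3$ (so $J = (u,v)$) the ideal $J^k$ is spanned by those monomials in $b_1^\pm, d_1, u, v, c_1, c_2, c_3$ of $(u,v)$-degree at least $k$, and every monomial has finite $(u,v)$-degree. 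So the real content is to produce a decomposition $\mathcal R := \k[b_1^\pm,d_1,d_2,d_3]\tensor\Lambda_\k[c_1,c_2,c_3] = \dsum_i A_i$ into indecomposable $\k[C_3]$-submodules such that $J^k = \dsum_i(J^k\cap A_i)$ for every $k$. Since $\k[b_1^\pm]$ is an invertible tensor factor with trivial action and $J$ is extended from $\k[d_1,d_2,d_3]$, I would first reduce to the case of $\k[d_1,d_2,d_3]$, and then tensor the resulting decomposition with $\k[b_1^\pm]$ and with the $C_3$-orbit decomposition $\Lambda_\k[c_1,c_2,c_3]\isom\k\{1\}\dsum\k[C_3]\{c_1,c_2,c_3\}\dsum\k[C_3]\{c_1c_2,c_2c_3,c_3c_1\}\dsum\k\{c_1c_2c_3\}$. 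Compatibility with the $J^k$ is preserved: tensoring a free $\k[C_3]$-module with anything gives a free module, and every short exact sequence of $\k[C_3]$-modules splits after $\tensor_\k\k[C_3]$ (as $\k[C_3]$ is self-injective), so each $N_i\tensor\k[C_3]$ can be split into rank-one free pieces compatibly with the chain of submodules $(J^k\cap N_i)\tensor\k[C_3]$.

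For $\k[d_1,d_2,d_3]$ itself I would pass to the coordinates $d_1,u,v$ above, in which $J = (u,v)$ and the action is $\gamma u = v$, $\gamma v = -u-v$ (so $\k\{u,v\}\isom\k^\sigma$, the $2$-dimensional indecomposable) and $\gamma d_1 = d_1-u$. The $J$-adic filtration is $C_3$-stable and finite in each internal degree, and passing to the associated graded improves the situation: $\mathrm{gr}_J\k[d_1,d_2,d_3]\isom\k[\bar d_1,\bar u,\bar v]$, in which $\bar d_1$ has become $C_3$-fixed, and grading by $(\bar u,\bar v)$-degree this is the $C_3$-equivariantly graded module $\dsum_j\bigl(\k[\bar d_1]\tensor\Sym^j(\k^\sigma)\bigr)$. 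Each piece here decomposes into indecomposable $\k[C_3]$-modules (trivial, $\k^\sigma$, and free --- the only three), concentrated in a single $J$-degree, so $\mathrm{gr}_J\k[d_1,d_2,d_3]$ gets a $J$-graded indecomposable decomposition.

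The heart of the lemma --- and the main obstacle --- is to lift such a decomposition of $\mathrm{gr}_J\k[d_1,d_2,d_3]$ to a decomposition of $\k[d_1,d_2,d_3]$ itself that is compatible with the $J$-filtration (so that the $J^k$ decompose as required). The obstruction to doing this cheaply is that $d_1$ is not $C_3$-fixed: the monomial basis $\{d_1^au^bv^c\}$ is adapted to the $J$-filtration but not $C_3$-equivariant, while the $C_3$-orbit basis of monomials in $d_1,d_2,d_3$ is $C_3$-equivariant but not adapted to $J$ --- for instance $(d_1-d_2)(d_1-d_3)\in J^2$ has nonzero projection $d_1^2\notin J^2$ onto the orbit summand $\k[C_3]\{d_1^2,d_2^2,d_3^2\}$. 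So one must construct a genuinely new, hybrid basis, and since the explicit generators are needed downstream (to identify the filtration $F^kH^*(C_3;-)$ in Lemma~\ref{lem:I_ki}), the lemma is proved by exhibiting the basis directly, by a computation as in the appendix for Proposition~\ref{prop:c1-d1}. The verification then reduces to checking that the chosen summands are indecomposable and that $\dsum_i(J^k\cap A_i) = J^k$ in each internal degree. The subtlety the computation must track is that the lifted summands need not be $J$-homogeneous: for example, modulo $J^2$ the module $\k[d_1,d_2,d_3]_n$ is indecomposable when $n\not\equiv 0\pmod 3$ (so the corresponding summand straddles $J$-degrees $0$ and $1$) but splits off a trivial summand when $n\equiv 0\pmod 3$, and which $\mathrm{gr}_J$-pieces glue together in this way is precisely the combinatorial content of the basis.
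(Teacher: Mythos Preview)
Your plan correctly identifies the structure of the argument and your reductions are sound --- in particular, your self-injectivity argument for the $c$-degree $1$ and $2$ pieces is cleaner than what the paper does (the paper handles these by an explicit manipulation, replacing $c_1\beta(f)\cup c_2\beta(f)\cup c_3\beta(f)$ by $c_1\beta(f)\cup c_2\beta(\gamma f)\cup c_3\beta(\gamma^2 f)$ and checking directly that this comes from a $\k[C_3]$-decomposition). One minor correction: the explicit basis is \emph{not} needed downstream. Proposition~\ref{prop:s1-s2-completion} only needs the $F$-filtration degrees of $s_1,s_2,s_3,b_1$, which it computes directly, together with finite generation; the basis from this lemma is used solely to verify that Lemma~\ref{lem:I_ki} applies.

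The genuine gap is that you stop at the planning stage for the key step. You correctly diagnose that the heart of the lemma is lifting a $J$-compatible indecomposable decomposition from $\mathrm{gr}_J\k[d_1,d_2,d_3]$ to $\k[d_1,d_2,d_3]$, and that this cannot be done for free because $d_1$ is not fixed --- but then you write ``the lemma is proved by exhibiting the basis directly, by a computation as in the appendix.'' The appendix contains no such computation (it treats a different question), and you do not supply one. The paper actually carries this out: it passes to coordinates $d=d_1$, $w_1=d_2-d_1$, and $w_2=s_1=d_1+d_2+d_3$ (the point of this choice, which differs from your $u,v$, is that $w_2$ is \emph{already} $C_3$-fixed in characteristic $3$), uses $s_3$-freeness to reduce to $A=\k\{d^\epsilon w_1^jw_2^k:\epsilon\in\{0,1,2\}\}$, and then in each homogeneous degree $n$ does a case analysis on $n\bmod 3$. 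In each case it writes down an explicit generator $f$ (namely $d\wbar^i$, $d^2\wbar^i$, or $d^2(w_1+w_2)\wbar^{i-1}$ with $\wbar=w_1(w_1+w_2)(w_1-w_2)$), computes $\beta(f)=\{f,\gamma f-f,f+\gamma f+\gamma^2 f\}$, and checks that these reduce to bases of the associated graded pieces, with the remaining summands coming from the free part of $W=\k[w_1,w_2]$. None of this is difficult, but it is the content of the lemma, and without it you have an outline rather than a proof.
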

\begin{proof}
Since $b_1$ has trivial action and $I$-filtration zero we can ignore
it; the desired basis will be free over $b_1$ on a basis for
$\k[d_1,d_2,d_3] \tensor \Lambda_\k[c_1,c_2,c_3]$.
In order to construct the
decomposition in the hypotheses for Lemma
\ref{lem:I_ki}, we will use the following change of basis:
\begin{align*}
d & = d_1
\\w_1 & = d_2-d_1
\\w_2 & = d_1+d_2+d_3
\end{align*}
which satisfies $\gamma(d)= d+w_1$, $\gamma(w_1)= w_1+w_2$, and
$\gamma(w_2)=w_2$.
Then $J = (w_1,w_2)$ and $s_3= d_1d_2d_3 = d(d+w_1)(d-w_1+w_2)$ is a fixed
point. We have a basis
$$ \k\{ d^\epsilon s_3^i w_1^j w_2^k \}_\attop{\epsilon\in \{ 0,1,2 \}\\ i,j,k\geq 0} $$
as there is a bijection between this and the monomial basis $\{ d^i w_1^jw_2^k \}$.
Write
$$ A = \k\{ d^\epsilon w_1^j w_2^k \}_\attop{\epsilon\in \{ 0,1,2 \}\\ j,k\geq
0}. $$
It suffices to find a basis of the desired form for
\begin{equation}\label{eq:AAA} A \dsum A\{ c_1,c_2,c_3 \}\dsum A\{ c_1c_2,
c_2c_3, c_1c_3 \} \dsum A\{ c_1c_2c_3 \},
\end{equation}
as the full basis will be free over $s_3$ on this.
Each summand is a $\k[C_3]$-module.
We work one $c$-degree at a time, as the action preserves this
degree, and start with $A$.

It will be helpful to understand the $\k[C_3]$-module structure of the
submodule $W = \{ w_1^jw_2^k \}_{j,k\geq 0} \subseteq J$. Let $\wbar =
w_1(w_1+w_2)(w_1-w_2)$. The non-free indecomposable summands are the trivial module
$\k\{ \wbar^i \}$ in homogeneous degree $3i$ and the indecomposable 2-dimensional module $\k\{
\wbar^iw_1, \wbar^iw_2 \}$ in homogeneous degree $3i+1$. The remaining
degrees consist of free modules. Write $A_n$ and $W_n$ for the homogeneous
degree $n$ parts of $A$ and $W$, respectively, and $F_n$ for the free module
component of $W_n$. Note that $\dim W_n = n+1$.
We have 
$$A_n = W_n \dsum W_{n-1}\{ d \} \dsum W_{n-2}\{ d^2 \} $$
though this is not a $\k[C_3]$-module decomposition. It suffices to find a
decomposition $A_n \isom \dsums_i A_{n,i}$ into indecomposable $\k[C_3]$-modules
with the following property: every $A_{n,i}$ has a basis $\beta(f_i) = \{ f_i,
\gamma f_i - f_i, f_i + \gamma f_i + \gamma^2 f_i \}$  such that the collection $\beta_n=\unions_i
\beta(f_i)$ reduces to a basis of the associated graded
$$ (\dsums_i J^i/J^{i-1}) \ints A_n = (J^n/J^{n-1} \dsum J^{n-1}/J^{n-2} \dsum
J^{n-2}/J^{n-3})\ints A_n = W_n \dsum dW_{n-1} \dsum d^2W_{n-2}. $$
%This is automatic for an indecomposable summand that belongs to a single $J^i \backslash J^{i-1}$.

\emph{Case 1: $n = 3i+1$.} We have $W_n = \k\{ w_1\wbar^i, w_2\wbar^i \} \dsum F_n$, $W_{n-1}
= \k\{ \wbar^i \}\dsum F_{n-1}$, and $W_{n-2} = F_{n-2}$.
We have 
$\beta(d\wbar^i) = \{ d\wbar^i, w_1\wbar^i, w_2\wbar^i \}$.
Choose $\k[C_3]$-module generators $\{ f_1,\dots,f_i \}$ of $F_n$, generators
$\{ f'_1,\dots,f'_i \}$ of $F_{n-1}$, and generators $\{ f''_1,\dots,f''_i \}$
of $F_{n-2}$.
Then each $\beta(f_i)$ reduces to three linearly independent elements in
$(J^n/J^{n-1})\ints A_n \isom W_n$, each $\beta(df'_i)$ reduces to three
linearly independent elements in $(J^{n-1}/J^{n-2})\ints A_n \isom dW_{n-1}$,
and each $\beta(d^2f''_i)$ reduces to three linearly independent elements in
$(J^{n-2}/J^{n-3})\ints A_n = d^2 W_{n-2}$.
Thus $$ \beta_n = \beta(d\wbar^i) \union \beta(f_1)\union \dots \union \beta(f_i)
\union \beta(df'_1)\union \dots \union \beta(df'_i) \union \beta(d^2f''_1)\union
\dots \union \beta(d^2f''_i)$$
is a basis for $A_n$ that satisfies the desired property.

\emph{Case 2: $n = 3i+2$.} We have $W_n = F_n$, $W_{n-1} = \k \{ w_1\wbar^i, w_2
\wbar^i\}\dsum F_{n-1}$, and $W_{n-2} = \k\{ \wbar^i \}\dsum F_{n-2}$. We
calculate
$$ \beta(d^2\wbar^i) = \{ d^2\wbar^i, (-dw_1 + w_1^2)\wbar^i, (-dw_2 -w_1^2 +
w_1w_2 + w_2^2)\wbar^i \}. $$
In the associated graded, this has the same span as $\{ dw_1\wbar^i, dw_2\wbar^i
\}\in (J^{n-1}/J^{n-2})\ints A_n \isom dW_{n-1}$ and $d^2\wbar^i \in
(J^{n-2}/J^{n-3})\ints A_n \isom d^2W_{n-2}$. We complete the basis $\beta_n$
using free summands similarly to the previous case.

\emph{Case 3: $n = 3i$.} We have $W_n = \k\{ \wbar^i \}\dsum F_n$, $W_{n-1}
= F_{n-1}$, and $W_{n-2} = \k\{ w_1\wbar^{i-1}, w_2\wbar^{i-1} \}\dsum F_{n-2}$.
We calculate
$$ \beta(d^2(w_1+w_2)\wbar^{i-1}) = \{ d^2(w_1+w_2)\wbar^{i-1}, (d^2 w_2 - dw_1^2+dw_1w_2 +
w_1^3 - w_1^2w_2)\wbar^{i-1}, (-w_1^3+w_1w_2^2)\wbar^{i-1} \}. $$
In the associated graded, this has the same span as $\{d^2w_1\wbar^{i-1},
d^2w_2\wbar^{i-1}\}$ in $(J^{n-2}/J^{n-3})\ints A_n \isom d^2 W_{n-2}$ and
$\wbar^i = (w_1^3-w_1w_2^2)\wbar^i \in J^n/J^{n-1}\ints A_n \isom W_n$. We complete the basis $\beta_n$
using free summands similarly to the other cases.

This concludes the construction of a basis for $A$. 
We now turn to the other
summands in \eqref{eq:AAA}. Since $c_1c_2c_3$ is fixed
by $C_3$, we immediately get a basis for $A\{ c_1c_2c_3 \}$. It suffices to construct a basis for $A\{
c_1,c_2,c_3 \}$ as the third term is analogous. We need to construct a
decomposition of $A_n\{ c_1,c_2,c_3 \}$ as indecomposable $\k[C_3]$-modules whose
$\k$-basis reduces to a basis of the associated graded
$$ W_n\{ c_1,c_2,c_3 \} \dsum dW_{n-1}\{ c_1,c_2,c_3 \} \dsum d^2 W_{n-2}\{
c_1,c_2,c_3 \}. $$
Suppose $\beta(f)$ is one of the bases from the construction above of a basis
for $A$. Since $c_1$, $c_2$, and $c_3$ are not in $J$, 
the basis $c_1\beta(f) \union
c_2 \beta(f) \union c_3 \beta(f)$ induces a basis of the associated graded with
the desired properties.
However, it need not come from a $\k[C_3]$-module decomposition. To address this, first we
claim we may replace one of the terms $c_i \beta(f)$ with $c_i\beta(\gamma
f)$. Clearly, $\beta(\gamma f)$ is a basis for the same $\k[C_3]$-module as
$\beta(f)$. It also has the same span on the level of the associated graded
because an element $g$ is in $J^k$ if and only if $\gamma g$ is in $J^k$.
It now suffices to show that there is a $\k[C_3]$-module decomposition of
$A_n\{ c_1,c_2,c_3 \}$ with basis $c_1\beta(f) \union c_2 \beta(\gamma f) \union c_3 \beta(\gamma^2
f)$.

To this end, consider the following decomposition
of $\k\{ c_1,c_2,c_3 \}\tensor \Span\beta(f)$:
\begin{align*}
\k\{ c_1f,\ c_2(\gamma f),\ c_3(\gamma^2 f) \}
\dsum \k\{ c_1(\gamma f - f),\ c_2 (\gamma^2 f - \gamma f),\ c_3 (f - \gamma^2 f) \}
\\\dsum\ \k\{ c_1(f + \gamma f + \gamma^2 f),\ c_2(f + \gamma f + \gamma^2 f),\ c_3(f +
\gamma f + \gamma^2 f) \}.
\end{align*}
The basis given can be grouped into $c_1\beta(f) \union c_2 \beta(\gamma f) \union c_3 \beta(\gamma^2
f)$.
\end{proof}

\begin{proposition}\label{prop:s1-s2-completion}
Write $N = \hat{H}^*(C_3; M)$. Then
$$ H^*(C_9/C_3; \hat{H}^*(C_3; (\Delta^{-1}M)^\hhat_I))\isom H^*(C_9/C_3; (\Delta^{-1} N)^\hhat_J) \isom H^*(C_9/C_3; \Delta^{-1} N)^\hhat_{(s_1,s_2)} $$
where $J = (d_1-d_2, d_1-d_3)$.
\end{proposition}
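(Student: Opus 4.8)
\emph{First isomorphism.} This is just Lemma~\ref{lem:completed-C3-coh} with $H^*(C_9/C_3;-)$ applied, once one checks that its isomorphism is $C_9/C_3$-equivariant. The ideal $J = (d_1-d_2,d_1-d_3)$ and the element $\Delta = d_1d_2d_3$ are $C_9/C_3$-invariant (the generator of $C_9/C_3$ permutes $\{d_1-d_2,d_2-d_3,d_3-d_1\}$ and fixes $\Delta$), so both $\hat H^*(C_3;(\Delta^{-1}M)^\hhat_I)$ and $(\Delta^{-1}N)^\hhat_J$ carry natural $C_9/C_3$-actions; the isomorphism between them is assembled out of the functorial comparison maps of Lemmas~\ref{lem:ML}, \ref{lem:completed-C3-coh-part1} and \ref{lem:xy} (reduction mod $J^k$, localization, the K\"unneth map), and so respects these actions. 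This is a point that needs checking, since the intermediate computations in those lemmas are only written equivariantly for the normal subgroup $C_3$.

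\emph{Second isomorphism.} Write $A = \Delta^{-1}N$, viewed as a $\k[C_9/C_3]$-algebra ($C_9/C_3\isom C_3$). First, a Milnor-sequence argument identical in shape to the proof of Lemma~\ref{lem:ML} — apply Lemma~\ref{lem:lim1} to a $\k[C_9/C_3]$-free resolution of $A/J^k$ — gives $H^*(C_9/C_3; A^\hhat_J)\isom\lim_k H^*(C_9/C_3; A/J^k)$, the $\lim^1$-term vanishing because, after stripping off $b_1$-periodicity and the $\Delta$-inversion, the groups $H^*(C_9/C_3; A/J^k)$ are finite in each internal degree and the tower has eventually constant images. Next apply Lemma~\ref{lem:I_ki} with this $A$ and $I = J$. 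Its hypotheses hold: the $\k[C_9/C_3]$-indecomposable decomposition of $A$ with compatible decompositions of all $J^k$ is exactly what Lemma~\ref{lem:w-basis} supplies for $\k[b_1^\pm,d_1,d_2,d_3]\tensor\Lambda_\k[c_1,c_2,c_3]$, and since $\Delta = s_3$ is $C_9/C_3$-fixed the localization (a filtered colimit along multiplication by $s_3$) carries this structure along; and $\bigcap_k J^k = 0$ by Krull's intersection theorem, as $A$ is free over the Noetherian domain $\Delta^{-1}\k[d_1,d_2,d_3]$ in which $J$ is a proper ideal. Lemma~\ref{lem:I_ki} then yields $\lim_k H^*(C_9/C_3; A/J^k)\isom\lim_k H^*(C_9/C_3; A)/F^k$ for the filtration $F^\bullet$ defined there.

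\emph{Matching the filtrations.} It remains to identify $\lim_k H^*(C_9/C_3; A)/F^k$ with $H^*(C_9/C_3; A)^\hhat_{(s_1,s_2)}$, i.e. to show that $F^\bullet$ and the $(s_1,s_2)$-adic filtration induce the same topology on $H^*(C_9/C_3; A)$. In cohomological degree $0$ we have $F^k H^0 = H^0\cap J^k$ (the remark after Lemma~\ref{lem:I_ki}). Since $s_1,s_2\in J$ (in characteristic $3$, $s_1 = -(d_1-d_2)-(d_1-d_3)$ and $s_2\equiv 3d_1^2\equiv 0\bmod J$) and since $(s_1,s_2)$ and $J$ have the same radical in $\Delta^{-1}\k[d_1,d_2,d_3]$ — both cut out the diagonal, using that $s_1=s_2=0$ forces $d_1=d_2=d_3$ in characteristic $3$ — the $J$-adic and $(s_1,s_2)$-adic topologies on $\Delta^{-1}\k[d_1,d_2,d_3]$ coincide; as $A$ is module-finite over the localized invariant subring $\k[b_1^\pm,s_1,s_2,s_3^\pm]$, over which $H^0$ is free of finite rank by Corollary~\ref{cor:E20*}, the Artin--Rees lemma gives that $F^k H^0$ is cofinal with $(s_1,s_2)^k H^0$. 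For $*>0$ one combines the same comparison of topologies with the explicit description of $H^{>0}(C_9/C_3; A)$ from Propositions~\ref{prop:p>0} and \ref{prop:serre-E2-summary}. I expect this last step to be the main obstacle: because the decomposition in Lemma~\ref{lem:w-basis} is not the naive monomial-orbit decomposition, the filtration $F^\bullet$ is genuinely delicate, and matching it to the $(s_1,s_2)$-adic filtration relies on both the radical computation above and an Artin--Rees argument over the invariant subring.
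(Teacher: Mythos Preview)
Your proposal is correct and follows the paper's approach: both invoke Lemma~\ref{lem:completed-C3-coh} for the first isomorphism and Lemmas~\ref{lem:I_ki} and~\ref{lem:w-basis} for the second, then compare the resulting filtration $F^\bullet$ with the $(s_1,s_2)$-adic one using finite generation of $H^*(C_9/C_3;\Delta^{-1}N)$ over $\k[b_1^\pm,s_1,s_2,s_3^\pm]$. The only notable difference is in that last step: where you argue via a radical computation and Artin--Rees, the paper works directly in the $w$-coordinates of Lemma~\ref{lem:w-basis}, checking that $s_1=w_2$ and $s_2=-dw_2-w_1^2+w_1w_2$ lie in $J\backslash J^2$ and then bounding the $F$-degree of the module generators to get $(s_1,s_2)^k\subseteq F^k\subseteq(s_1,s_2)^{k-m}$.
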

\begin{proof}
The first isomorphism is Lemma \ref{lem:completed-C3-coh}.
The second isomorphism will come from Lemma \ref{lem:I_ki} using the basis in Lemma \ref{lem:w-basis}.
Let $F$ be the filtration defined in Lemma \ref{lem:I_ki}. We start by
determining the filtration of the multiplicative generators of $H^*(C_9/C_3;
N)$ from Proposition \ref{prop:serre-E2-summary}. We have $J =
(w_1,w_2)$ in the notation of Lemma \ref{lem:w-basis}.
\begin{align*}
s_1 & = d_1+d_2+d_3 = w_2 \in J \backslash J^2
\\s_2 & = d_1d_2 + d_2d_3 + d_3d_1 = x(x+w_1) + (x+w_1)(x-w_1+w_2) +
(x-w_1+w_2)x 
\\ &\hspace{20pt} = - xw_2 -w_1^2 +w_1w_2 \in J \backslash J^2
\\s_3 & = \Delta = d_1d_2d_3 = x(x+w_1)(x-w_1+w_2)\notin J
\\b_1 & \notin J
\end{align*}
Instead of calculating the degrees for the other generators, we assert that they
do not matter after taking the limit:
$\Delta^{-1}N$ is finitely generated as a module over $\k[b_1^\pm,s_1,s_2,s_3^\pm]$, and so
there is a maximum $F$-degree $m$ of the module generators. Let $\til{J} = (s_1,s_2)$.
Then
$$ \til{J}^k \subseteq F^k H^0(C_9/C_3; \Delta^{-1} N) \subseteq \til{J}^{k-m} $$
and so completing with respect to the filtration $F$ is equivalent to completing
with respect to powers of $\til{J}$.
\end{proof}

\section{Higher Serre differentials and extensions}\label{sec:higher-serre}

Let $M = \Sym(\Ind_{C_3}^{C_9}\bar{\rho})$ be as in Section
\ref{sec:completion}.
We consider the Serre spectral sequence for Tate cohomology (Proposition \ref{lem:serre-tate})
\begin{equation}\label{eq:serre-sseq} E_2^{p,q} \isom H^{p}(C_9/C_3; \hat{H}^{q}(C_3; (\Delta^{-1} M)^\hhat_I))
\implies\hat{H}^{p+q}(C_9; (\Delta^{-1} M)^\hhat_I) \end{equation}
which has differentials
$$ d_r:E_r^{p,q} \to E_r^{p+r, q-r+1}. $$
The differentials preserve internal degree on homogeneous elements.
Combining Propositions \ref{prop:serre-E2-summary} and
\ref{prop:s1-s2-completion}, we have an expression for the Serre $E_2$ term:
\begin{equation}\label{eq:E2-sec5} \SE_2^{*,*} \isom  {S^*_\k\{
1,\bar{c},\delta,\bar{c}\delta,f_0,\dots,f_5,F_0,\dots,F_5 \}\over 
\big( a_2 s_1,b_2 s_1, a_2s_2, b_2 s_2, a_2 \delta, b_2 \delta,
a_2\bar{c}\delta,b_2\bar{c}\delta, a_2 f_i, b_2
f_i,a_2 F_i, b_2F_i \big)} \end{equation}
where $S^*_\k = \k[b_1^\pm, b_2, s_3^\pm][[s_1, s_2]]\tensor \Lambda_\k[a_2]$.

\begin{lemma} \label{lem:d3}
The only nontrivial differentials in the Serre spectral sequence 
\eqref{eq:serre-sseq} are generated over $\k[b_1^\pm,
b_2]\tensor \Lambda_\k [a_2]$ by
$$ d_3({\bar{c}}^\epsilon s_3^i\cdot a_2) = b_1^{-1} b_2^2 \bar{c}^\epsilon s_3^i $$
for $\epsilon \in \{ 0,1 \}$, $i\in \Z$.
There is a hidden extension $3\cdot b_1 = b_2$.
\end{lemma}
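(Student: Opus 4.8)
The plan is to determine all differentials by combining internal‑degree bookkeeping with a comparison to the trivial‑coefficient Serre spectral sequence of Lemma \ref{lem:serre-triv} and a small multiplicative trick. First I would set up the comparison: the unit $\W\to(\Delta^{-1}M)^\hhat_I$ is $C_9$‑equivariant, and hence induces a morphism from the $b_1$‑inverted (Tate) Serre spectral sequence of Lemma \ref{lem:serre-triv} with $R=\W$ to \eqref{eq:serre-sseq}, carrying $b_1\mapsto b_1$, $b_2\mapsto b_2$, $a_2\mapsto a_2$. Since $b_1$ is the invertible periodicity class (Proposition \ref{lem:serre-tate}), hence a permanent cycle, and $b_2$ is a permanent cycle (being the image of the permanent cycle $b_2$ on the source side), every $d_r$ in \eqref{eq:serre-sseq} is $\k[b_1^\pm,b_2]$‑linear and satisfies the Leibniz rule for $a_2$. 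By the presentation \eqref{eq:E2-sec5} it then suffices to compute each $d_r$ on $1,\bar c,\delta,\bar c\delta,f_0,\dots,f_5,F_0,\dots,F_5$ and on $s_1,s_2,s_3^{\pm1}$, with $d_r(1)=0$ automatic. Pushing forward the relation $d_3(a_2)=b_1^{-1}b_2^2$ (obtained from Lemma \ref{lem:serre-triv} by inverting $b_1$) along the comparison map gives $d_3(a_2)=b_1^{-1}b_2^2$ in \eqref{eq:serre-sseq}; granting for the moment that $\bar c$ and $s_3$ are permanent cycles (next paragraph), Leibniz then yields exactly the claimed family $d_3(a_2 b_1^jb_2^k\bar c^\epsilon s_3^i)=b_1^{j-1}b_2^{k+2}\bar c^\epsilon s_3^i$, the remaining $a_2$‑multiples ($a_2s_1$, $a_2f_i$, $a_2\delta$, \dots) being already zero on $E_2$.

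Next I would show that $\delta,\bar c,\bar c\delta,s_3,f_i,F_i,s_1,s_2$ are permanent cycles. Differentials preserve internal degree $t$, and by Propositions \ref{prop:p>0} and \ref{prop:serre-E2-summary} the only classes in degrees $p>0$ are the $b_2^j$‑ and $a_2b_2^j$‑multiples of $1$ and $\bar c$, whose internal degrees lie in $18\Z$, respectively $-6+18\Z$. A short bidegree count then shows: no generator other than $s_1$ admits a possible $d_2$‑target, and for $s_1$ the candidate is a multiple of $b_1^{-2}b_2\bar c$, which is excluded because $b_2s_1=0$ forces $b_2\cdot d_2(s_1)=0$ while $b_1^{-2}b_2^2\bar c\neq0$ on $E_2$; hence $d_2\equiv0$ and $E_3=E_2$. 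For $d_3$, the generators $f_i,F_i,s_1,s_2$ have no candidate target, while each of $\delta,\bar c,\bar c\delta,s_3$ has a single candidate target of the shape $T=a_2b_2b_1^{-1}\cdot(\text{permanent cycle})$. The key observation is that such a $T$ is an $a_2$‑multiple, so $d_3(T)=b_1^{-2}b_2^3\cdot(\cdots)\neq0$ on $E_3$ — this uses only $d_3(a_2)=b_1^{-1}b_2^2$ together with $a_2^2=0$, so every term of $T\cdot d_3(-)$ containing $a_2$ drops out — whence $T$ is not a $d_3$‑cycle and cannot be a $d_3$‑boundary, forcing $d_3$ to vanish on $\delta,\bar c,\bar c\delta,s_3$ and closing the loop. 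Finally $d_3$ restricts to an isomorphism from the $a_2$‑multiple part onto the $b_2^{\ge2}$‑multiple part, so $E_4$ is concentrated in $p\in\{0,2\}$; therefore $d_r=0$ for $r\ge4$, the spectral sequence converges by Proposition \ref{lem:serre-tate}, and $E_4=E_\infty$.

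For the hidden extension, I would apply the unit‑induced ring map $\hat H^*(C_9;\W)=(\W/9)[b^\pm]\to\hat H^*(C_9;(\Delta^{-1}M)^\hhat_I)$: it is compatible with the spectral sequence filtrations and sends $b_1\mapsto b_1$, $b_2\mapsto b_2$, and $b_2$ is a nonzero permanent cycle of \eqref{eq:serre-sseq}, so the hidden relation $3b_1=b_2$ supplied by Lemma \ref{lem:serre-triv} transports to a nonzero hidden extension $3b_1=b_2$ in the abutment. I expect the main obstacle to be the $d_3$ analysis of the middle paragraph: the phantom differentials on $\delta,\bar c,\bar c\delta,s_3$ cannot be ruled out on internal‑degree grounds, and the non‑circular argument against them — that the unique candidate target is forced to be a non‑cycle by the already‑established $d_3(a_2)=b_1^{-1}b_2^2$ — is the delicate point. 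A secondary technicality is checking that the unit map genuinely induces a map of the $b_1$‑inverted Serre spectral sequences, i.e.\ that the localization in Proposition \ref{lem:serre-tate} is functorial in the coefficient module.
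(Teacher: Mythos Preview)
Your argument is correct and arrives at the same conclusion as the paper, but by a somewhat different route. The paper's proof is more structural: it observes once that the Leibniz rule for $b_2$ forbids any differential with $b_2$-torsion source and $b_2$-periodic target, which in one stroke rules out differentials on $s_1,s_2,\delta,\bar c\delta,f_i,F_i$ (all annihilated by $b_2$ on $E_2$). This reduces the question to differentials inside the $b_2$-periodic part $\k[b_1^\pm,b_2,s_3^\pm]\otimes\Lambda_\k[a_2,\bar c]$, and internal degree then separates this into the towers $(\k[b_1^\pm,b_2]\otimes\Lambda_\k[a_2])\{\bar c^\epsilon s_3^i\}$, each preserved by all $d_r$. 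You instead check each generator by an explicit $(p,q,t)$-degree count, which is longer but equally valid; you even use the same $b_2$-Leibniz trick for $s_1$ in your $d_2$ analysis, just not systematically.

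Where your proof is actually \emph{more} careful than the paper's is the final step: ruling out $d_3$ on the tower generators $\bar c$ and $s_3$ (and on $\delta,\bar c\delta$). The paper asserts that once one is inside a single tower the differentials are those of the trivial-coefficient case, but does not spell out why $d_3(\bar c^\epsilon s_3^i)=0$; a priori the candidate target $a_2b_2b_1^{-1}\bar c^\epsilon s_3^i$ is available. Your observation that this candidate supports a nonzero $d_3$ (equivalently, that $d_3^2=0$ forces the coefficient to vanish, since $a_2^2=0$ kills the term involving the unknown $d_3(\bar c^\epsilon s_3^i)$) is exactly what is needed and is non-circular. So you trade the paper's cleaner reduction for a more explicit endgame; both are sound. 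Your worry about functoriality of the $b_1$-localized Serre spectral sequence is not a real obstacle: the unit map induces a map of ordinary Serre spectral sequences, and localizing at the permanent cycle $b_1$ is natural.
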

\begin{figure}[H]
\centerline{\includegraphics[width=400pt]{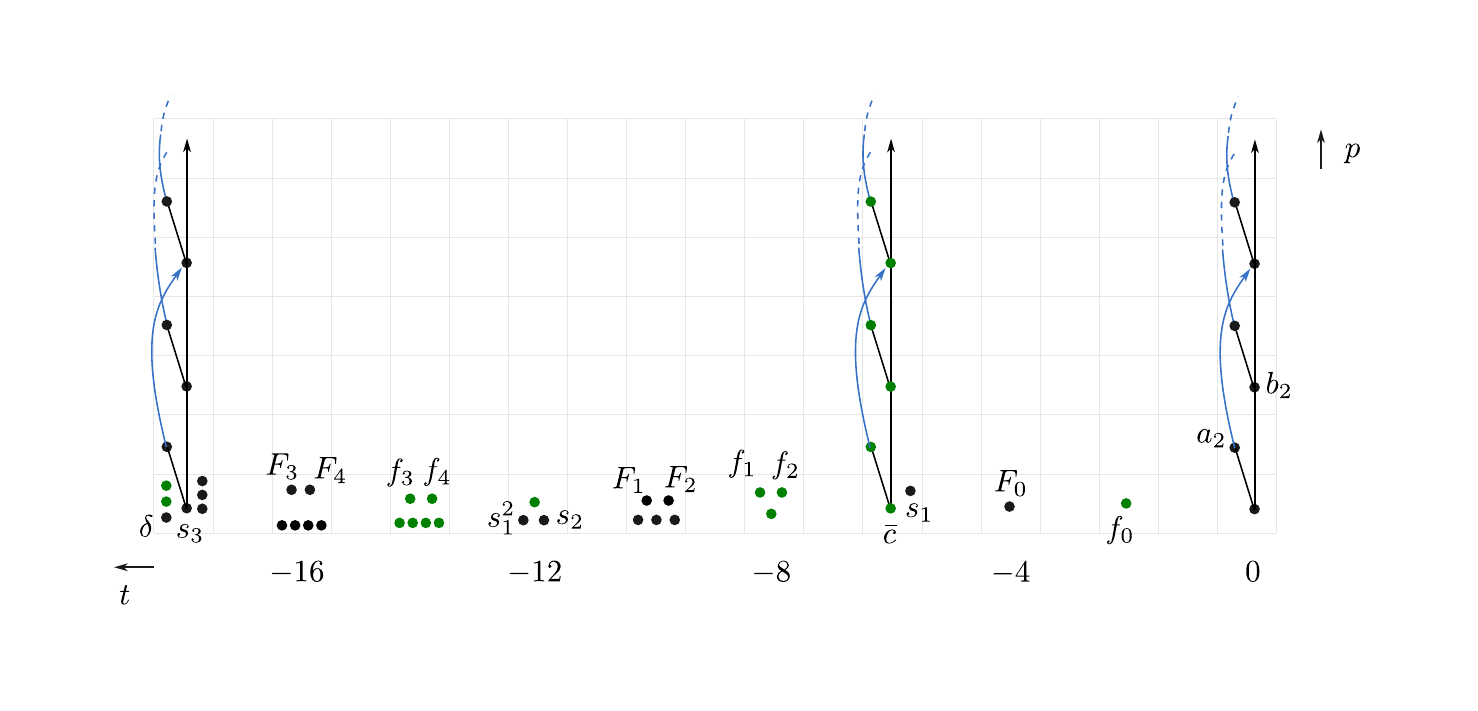}}
\caption{The Serre $E_3$ page with $E_3$ differentials. Each dot represents a copy of $\k[b_1^\pm]$. Green dots have odd $q$ degree. Differentials are in blue.}
\end{figure}
\begin{proof}
The target of a nontrivial differential must have $p > 0$, and since the only
multiplicative generators with $p>0$ are $a_2$ and $b_2$, the target must be in
$(\k[b_2]\tensor \Lambda_\k[a_2])\{ x \}$ where $x$ is not $b_2$-torsion.
From Proposition \ref{prop:serre-E2-summary}, we have $x\in
\k[b_1^\pm,s_3^\pm]\tensor \Lambda_\k[\bar{c}]$. The Leibniz rule prevents
nontrivial differentials with $b_2$-torsion source form having $b_2$-periodic
target, and so both the
source and target of the differential must be an $a_2$-$b_2$-tower. Since
differentials preserve the internal degree $t$ and the monomials
$\bar{c}^\epsilon s_3^i$ for $\epsilon \in \{ 0,1 \}$, $i\in \Z$ appear in distinct degrees, a differential takes a summand
$(\k[b_1^\pm, b_2]\tensor \Lambda_\k[a_2])\{ \bar{c}^\epsilon s_3^i \}$ to itself.
Thus we may focus on differentials within $\k[b_1^\pm, b_2]\tensor
\Lambda_\k[a_2]$, which is the image of 
the trivial coefficients spectral sequence in Section
\ref{sec:serre-triv}. By Lemma \ref{lem:serre-triv}, 
we find that $b_1$ and $b_2$ are permanent cycles, and
there is a $d_3$ differential $d_3(b_1a_2) = b_2^2$.
The extension also comes from comparison using Lemma \ref{lem:serre-triv}.
\end{proof}

\begin{theorem} \label{thm:main-completion}
Let $\til{S}^*_\k = (\W/9)[b_1^\pm,s_3^\pm][[s_1,s_2]]$. Then there is an
isomorphism of $\til{S}_\k^*$-modules
$$ \hat{H}^*(C_9; (\Delta^{-1}M)^\hhat_I) \isom \til{S}_\k^*\{ 1,\bar{c}, \delta, \bar{c}\delta,
f_0,\dots,f_5,F_0,\dots,F_5 \}\big/ \big(3s_1,3s_2,
3\delta,3\bar{c}\delta,3f_i,3F_i \big). $$
\end{theorem}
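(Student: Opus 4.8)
The plan is to run the Tate Serre spectral sequence \eqref{eq:serre-sseq}, whose $E_2$ page is given by \eqref{eq:E2-sec5}, all the way to $E_\infty$ and then resolve the single hidden extension. By Lemma~\ref{lem:d3} (equivalently Proposition~\ref{prop:serre-collapse}) the only differentials are the $d_3$'s with $d_3(\bar c^\epsilon s_3^i a_2)=b_1^{-1}b_2^2\bar c^\epsilon s_3^i$, and the only nontrivial multiplicative extension is $3b_1=b_2$, so the content of the argument is purely bookkeeping.

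First I would record the structure of \eqref{eq:E2-sec5} as a module over $P:=\k[b_1^\pm,s_3^\pm][[s_1,s_2]]$. Unwinding the relations $a_2 s_i=b_2 s_i=0$ and $a_2\delta=b_2\delta=\dots=0$, the filtration-zero part is the free module $P\{1,\bar c,\delta,\bar c\delta,f_0,\dots,f_5,F_0,\dots,F_5\}$, each even positive filtration $p=2j$ with $j\geq1$ contributes $\k[b_1^\pm,s_3^\pm]\{b_2^j,b_2^j\bar c\}$, and each odd filtration $p=2j+1$ contributes $\k[b_1^\pm,s_3^\pm]\{a_2 b_2^j,a_2 b_2^j\bar c\}$. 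Since $b_1,b_2,s_3$ are permanent cycles, the Leibniz rule shows $d_3$ sends the $a_2$-towers isomorphically (the factor $b_1^{-1}$ being a unit) onto the $b_2^j$-towers with $j\geq2$; in particular both families disappear on $E_4$. Hence $E_4\isom P\{1,\bar c\}\dsum \k[b_1^\pm,s_3^\pm]\{b_2,b_2\bar c\}\dsum P\{\delta,\bar c\delta,f_0,\dots,f_5,F_0,\dots,F_5\}$, which is concentrated in $p\in\{0,2\}$. As this involves only finitely many $p$-degrees, Proposition~\ref{lem:serre-tate} gives convergence of \eqref{eq:serre-sseq}; and no $d_r$ with $r\geq4$ can be nonzero, since its source or target would lie outside $p\in\{0,2\}$, so $E_4=E_\infty$.

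It remains to reassemble $\HH^*(C_9;(\Delta^{-1}M)^\hhat_I)$ from $E_\infty$. The extension $3b_1=b_2$ (Lemma~\ref{lem:d3}, by comparison along the unit map $\W\to M$ with Lemma~\ref{lem:serre-triv}) means that multiplication by $3\in\W$ on the whole module agrees with multiplication by the class $b_2 b_1^{-1}$, which raises Serre filtration by $2$. The relations $b_2 s_1=b_2 s_2=b_2\delta=b_2\bar c\delta=b_2 f_i=b_2 F_i=0$ already hold on $E_2$, hence on $E_\infty$; since $E_\infty$ vanishes in filtrations $\geq3$ these products therefore vanish in $\HH^*$ itself, giving $3s_1=3s_2=3\delta=3\bar c\delta=3f_i=3F_i=0$. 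On the other hand $3\cdot1=b_1^{-1}b_2\neq0$ and $9\cdot1=b_1^{-1}(3b_2)=0$, so $1$ (and likewise $\bar c$) generates a free $\W/9$-summand whose associated graded is precisely $P\{1\}\dsum\k[b_1^\pm,s_3^\pm]\{b_2\}$. Matching generators and internal degrees then identifies $\HH^*(C_9;(\Delta^{-1}M)^\hhat_I)$ with $\til{S}^*_\k\{1,\bar c,\delta,\bar c\delta,f_0,\dots,f_5,F_0,\dots,F_5\}/(3s_1,3s_2,3\delta,3\bar c\delta,3f_i,3F_i)$, with $b_1,s_1,s_2,s_3$ acting by the corresponding multiplications. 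The main obstacle is exactly this middle-to-end bookkeeping: locating the $s_1,s_2$-torsion inside the $b_2$-towers on $E_\infty$, and verifying that the lone extension $3b_1=b_2$ accounts for the entire gap between $E_\infty$ and the claimed module (i.e.\ that there are no further hidden extensions), for which we lean on Proposition~\ref{prop:serre-collapse}.
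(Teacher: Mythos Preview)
Your proof is correct and follows the same approach as the paper: run the Serre spectral sequence \eqref{eq:serre-sseq}, apply the $d_3$ differentials and the extension $3b_1=b_2$ from Lemma~\ref{lem:d3}, and argue by filtration that no further differentials or hidden extensions can occur. Your analysis is in fact more explicit than the paper's terse proof---for instance, you correctly sharpen the paper's bound ``$E_4$ is concentrated in $p=0,1,2,3$'' to $p\in\{0,2\}$, and you spell out why the relations $b_2 s_i=b_2\delta=\dots=0$ on $E_\infty$ lift to $\hat H^*$ (namely, vanishing of $E_\infty$ in filtration $\geq 3$).
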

\begin{proof}
The spectral sequence converges by Lemma \ref{lem:serre-triv}; the boundedness
condition is satisfied by the fact that the $E_4$ page is concentrated in
degrees with $p=0,1,2,3$ since the differentials in Lemma \ref{eq:serre-sseq}
truncate all the $b_2$-towers.
For degree reasons, no other hidden 3-multiples are possible beyond the ones
identified in Lemma \ref{eq:serre-sseq}, and there are no
other relations involving elements of $S_\k$ that could be hiding hidden
extensions. Applying the differentials and extensions in Lemma
\ref{eq:serre-sseq} to the $E_2$ term in \eqref{eq:E2-sec5} gives the result.
\end{proof}
Observe that $\til{S}^*_\k/3 = \k[b_1^\pm,s_3^\pm][[s_1,s_2]]$. The degrees of the generators can be read off Corollary \ref{cor:E20*}, as a class in degree $(p,q,t)$ converges to a class in homological degree $p+q$ and internal degree $t$.

\begin{remark}
We opted not to write down the full ring structure of
$\hat{H}^*(C_9;(\Delta^{-1} M)^\hhat_I)$ or the Serre $E_2$ page, as the large number of relations
required make the presentation rather unwieldy. None of
the $S_\k$-module generators can be removed, and several families of relations
would need to be added, such as those coming from the general relation
$$ (c_1\phi_1 + c_2\phi_2 + c_3\phi_3)(c_1 \psi_1 + c_2\psi_2 + c_3\psi_3) =
\tr(c_1c_2 \phi_1 \psi_2) - \tr(c_1c_2 \phi_2 \psi_1) $$
where $\{ \phi_1,\phi_2,\phi_3 \}$ and $\{ \psi_1,\psi_2,\psi_3 \}$ are $C_9/C_3$-orbits in
$S_\k$. Several of these relations are given in the appendix (see Lemma
\ref{lem:extra-relations}). We also point out
the fact that $f_0 \cdot F_0 = \tr(c_1)\tr(c_1c_2) = 0$ in the Serre spectral
sequence, but we have $$ f_0 \cdot F_0 = 3 \bar{c} $$
in $\hat{H}^*(C_9; \Sym(\Ind_{C_3}^{C_9}\bar{\rho}))$.
%\fixme{This needs more checking.}
\end{remark}

Finally we write down an $RO(C_9)$-graded version of Theorem
\ref{thm:main-completion}. This is a fairly simple modification of the
integer-graded statement, but we expect that the representation-graded
perspective will be useful when determining higher differentials; see e.g.
\cite{balderrama-total}.
Recall
$$ RO(C_9) \isom \Z\{ 1,\lambda_1,\lambda_2,\lambda_3,\lambda_4 \} $$
where $\lambda_i$ denotes the 2-dimensional real representation given by
rotation by $2i\pi \over 9$.
\begin{lemma}
Let $E$ be a $C_k$-ring spectrum and write $E^h = F(E{C_k}_+, E)$. There is a hfpss
%$$ E_2^{n,t,V}=H^n(C_k; \pi_{t-|V|}(S^{-V}\tensor E^h))\implies \pi_{t-|V|-n}(S^{-V}\tensor
%E^h) $$
$$ E_2^{n,V}=H^n(C_k; \pi_V(E^h))\implies \pi_{V-n}(E^h) $$
where $V$ represents a virtual representation in $RO(C_k) \isom
\Z[\lambda_1,\dots,\lambda_{(k-1)/2}]$. Moreover, 
$$ E_2^{*,\star} \isom H^*(C_k; \pi_*(E^h))[u_{\lambda_1}^\pm,\dots,u_{\lambda_{(k-1)/2}}^\pm] $$
where $u_{\lambda_i}\in H^0(C_k; \pi_2(S^{\lambda_i}\tensor E^h))$.
\end{lemma}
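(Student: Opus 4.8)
The plan is to construct this as the homotopy fixed point spectral sequence of the genuine cofree $C_k$-spectrum $E^h=F(E{C_k}_+,E)$, keeping track of the $RO(C_k)$-grading, and to check that passing from the integer grading to the $RO(C_k)$-grading only introduces the orientation classes $u_{\lambda_i}$. First I would build the spectral sequence exactly as the ordinary hfpss: filter a free contractible $C_k$-CW model of $EC_k$ by skeleta, whose subquotients are wedges of free cells ${C_k}_+\wedge S^n$, apply $F(-,E)$ to obtain a tower of genuine $C_k$-spectra with limit $E^h$, and take $RO(C_k)$-graded equivariant homotopy. The free cells contribute (shifts of) the coinduced spectrum $F({C_k}_+,E)$, and $\pi^{C_k}_V(F({C_k}_+,E))\isom\pi^e_V(E)$ by adjunction, where $\pi^e_V(E):=\pi_0$ of the underlying spectrum of $S^{-V}\wedge E$ with its residual $C_k$-action (so $\pi^e_V(E)\isom\pi_{\dim V}(E)$ as an abelian group). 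This gives $E_2^{n,V}=H^n(C_k;\pi^e_V(E))\implies\pi_{V-n}(E^h)$, where $\pi_{V-n}(E^h)$ denotes $RO(C_k)$-graded equivariant homotopy, i.e.\ $\pi_{V-n}\big((E^h)^{C_k}\big)$. Convergence holds for the same reasons as the integer-graded hfpss used elsewhere in this paper: the filtration is exhaustive, so the spectral sequence converges conditionally, and strong convergence follows in the usual way.

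Next I would check that the coefficients carry no twist. Since $k$ is odd, each $\lambda_i$ is a $2$-dimensional complex (hence oriented) representation, and a generator $\zeta\in C_k$ acts on $S^{\lambda_i}$ by a self-homeomorphism of degree $+1$, hence by a map homotopic to the identity; therefore $\zeta$ acts trivially on $\pi_*(i^*S^{\lambda_i})$. It follows that for every $V$ the residual $C_k$-module $\pi^e_V(E)$ is canonically isomorphic, up to the sign fixed by the complex orientation, to $\pi^e_{\dim V}(E)=\pi_{\dim V}(E)=\pi_{\dim V}(E^h)$, and hence $E_2^{n,V}\isom H^n\big(C_k;\pi_{\dim V}(E^h)\big)$.

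Then I would produce the orientation classes and assemble the formula. The complex orientation of $\lambda_i$ pins down a $C_k$-fixed generator $\iota_i\in\pi_2(S^{\lambda_i})$ (fixed, by the previous paragraph); composing with the unit $S^0\to E^h$ of the ring spectrum $E^h$ defines
$$ u_{\lambda_i}:=\iota_i\wedge 1\in\pi_2(S^{\lambda_i}\wedge E^h)^{C_k}=H^0\big(C_k;\pi_2(S^{\lambda_i}\wedge E^h)\big), $$
which is a suspension of the unit and hence a permanent cycle in filtration $0$. Using the multiplication on $E^h$, multiplication by $u_{\lambda_i}$ gives a map $E_2^{n,V}\to E_2^{n,V+2-\lambda_i}$, which on coefficients is $H^n(C_k;-)$ applied to multiplication by $\iota_i$; since $\iota_i$ underlies an equivalence $S^2\to i^*S^{\lambda_i}$, this is precisely the canonical isomorphism of the previous paragraph, so multiplication by $u_{\lambda_i}$ is an isomorphism on $E_2$. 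Writing a general $V\in RO(C_k)$ as $V=n_0+\sum_i n_i\lambda_i$ with $\dim V=n_0+2\sum_i n_i$, multiplication by the monomial $\prod_i u_{\lambda_i}^{-n_i}$ is then an isomorphism $E_2^{n,\dim V}\to E_2^{n,V}$ with source $H^n(C_k;\pi_{\dim V}(E^h))$. Letting the exponents $n_i$ range over all of $\Z$, this is exactly the claimed identification $E_2^{*,\star}\isom H^*(C_k;\pi_*(E^h))[u_{\lambda_1}^{\pm},\dots,u_{\lambda_{(k-1)/2}}^{\pm}]$, with $u_{\lambda_i}$ of cohomological degree $0$ and $RO(C_k)$-degree $2-\lambda_i$; a different sign convention for the degree of $u_{\lambda_i}$ corresponds to relabeling $u_{\lambda_i}\leftrightarrow u_{\lambda_i}^{-1}$ and does not change the Laurent-polynomial statement.

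The one step needing care is the second one: the lemma is a statement about the $E_2$-page only, and it is true in this generality precisely because a degree-one self-map of a sphere is homotopic to the identity. The Borel homotopy type of the $C_k$-action on $S^{\lambda_i}$ is in general not trivial --- so $S^{\lambda_i}\wedge E^h\not\simeq S^2\wedge E^h$, and the abutment need not be $\lambda_i$-periodic --- but this higher data is invisible on the homotopy groups that form the coefficients of the spectral sequence. The remaining steps are routine homotopy-fixed-point-spectral-sequence bookkeeping.
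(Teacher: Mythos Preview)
Your argument is correct and shares the paper's key observation: the $C_k$-action on the underlying sphere $i^*S^{\lambda_i}$ is by a degree-$1$ self-map, hence homotopic to the identity, so the coefficient modules acquire no twist and the $E_2$-page is obtained from the integer-graded one by adjoining invertible classes $u_{\lambda_i}$.

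The packaging differs slightly. You build the $RO(C_k)$-graded spectral sequence directly from the skeletal filtration of $EC_k$ and identify the coefficients $\pi^e_V(E)\cong\pi_{\dim V}(E)$ as $C_k$-modules in one step. The paper instead fixes $V$, uses dualizability to rewrite $E^h\wedge S^V\simeq (S^V\wedge E)^h$, and then runs an Atiyah--Hirzebruch spectral sequence (which collapses for degree reasons) to identify $\pi_*(S^V\wedge E^h)\cong H_*(S^V;\Z)\otimes\pi_*E^h$; the triviality of the $C_k$-action on $H_*(S^{\lambda_i};\Z)$ is exactly your degree-$1$ argument. Your route is a bit more streamlined in that it avoids the auxiliary AHSS, while the paper's route has the mild expository advantage of reducing each $V$-graded piece to an honest integer-graded hfpss for a different spectrum. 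Both land on the same definition of $u_{\lambda_i}$ via the generator $\iota_{\lambda_i}$ and the same final statement.
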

\begin{proof}
The first part of the proof works for an arbitrary finite group $G$ and
$V\in RO(G)$.
First observe that, since $S^V$ is dualizable, we have
\begin{align*}
E^h\tensor S^V &\hteq F(EG_+, E)\tensor S^V \hteq F(S, F(EG_+, E)\tensor S^V)
\hteq F(DS^V, F(EG_+, E))
\\ & \hteq F(DS^V \tensor EG_+, E) \hteq F(EG_+, S^V \tensor E)
\hteq (S^V\tensor E)^h
\end{align*}
and so there is a hfpss $E_2^{n,t}(V)\isom H^n(G; \pi_t(S^V\tensor E)) \implies
\pi_{n-t}(S^V\tensor E^h)$. To identify the $E_2$ term,
consider the Atiyah--Hirzebruch spectral sequence
$$ H_p(S^V; \pi_q E^h) \implies \pi_{p+q}(S^V \tensor E^h) $$
which collapses because it is concentrated in one $p$ degree. Moreover, we have
$$ H_*(S^V; \pi_* E^h) \isom H_*(S^V; \Z)\tensor \pi_* E^h. $$

Now specializing to $G= C_k$ and a basis element $V=\lambda_i\in RO(C_k)$, observe
that $H_*(S^{\lambda_i})\isom \Z[2]$ (where $[-]$ denotes degree shift) has
trivial $C_k$-action because rotation $S^V \to S^V$ is non-equivariantly
homotopic to the identity, and therefore the induced map on ordinary homology
has degree 1.
Thus for a fixed $V$ we have a spectral sequence
$$ E_2^{n,t}(V)=H^n(G; \pi_{t}E^h\tensor H_{|V|}(S^{V})) \isom H^{n}(G; \pi_t
E^h)[|V|] \implies \pi_{|V|+t-n}(S^V\tensor E^h). $$
The $E_2$ term $E_2^{*,*}(V)$ is a free rank-one module over the integer-graded $E_2$ term
$H^*(G; \pi_*E^h)$ with module generator $u_V = 1\tensor \iota_V \in H^0(G;
\pi_0E^h \tensor H_{|V|}(S^V))\isom H^0(G; \pi_{|V|}(S^V\tensor E^h))$ where $\iota_V$ is the generator of
$H_{|V|}(S^V)$. Thus we write $E_2^{n,t}(V) \isom H^n(G; \pi_* E^h)\{ u_V \}$.
Taking the sum over $V$ gives the $E_2$ term $E_2^{*,\star}$ for $\star\in RO(G)$.
\end{proof}

\begin{corollary}
There is an $RO(C_9)$-graded hfpss with $E_2$ term
$$ \hat{H}^{*,\star}(C_9; (\Delta^{-1}M)^\hhat_I) \isom \til{S}_\k^*\{ 1,\bar{c}, \delta, \bar{c}\delta,
f_0,\dots,f_5,F_0,\dots,F_5 \}\big/ \big(3s_1,3s_2,
3\delta,3\bar{c}\delta,3f_i,3F_i \big) [u_{\lambda_1}^\pm,\dots,u_{\lambda_4}^\pm]$$
with $u_{\lambda_i}$ in degree $(n,V) = (0,2-\lambda_i)$ and other notation as in Theorem
\ref{thm:main-completion}.
\end{corollary}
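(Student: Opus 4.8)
The plan is to deduce the $RO(C_9)$-graded statement from the two inputs that are already in hand: the integer-graded computation of $\hat{H}^*(C_9; (\Delta^{-1}M)^\hhat_I)$ in Theorem \ref{thm:main-completion}, and the general $RO(C_k)$-graded structural lemma immediately preceding this corollary. Concretely, the preceding lemma (applied with $E = E_6$, $k = 9$, $G = C_9$) gives an $RO(C_9)$-graded hfpss whose $E_2$ term is a free module of rank one over the integer-graded $E_2$ term $H^*(C_9; \pi_* E_6^h)$ for each basis representation $V = \lambda_i$, with module generator $u_{\lambda_i}$ in degree $(0, 2 - \lambda_i)$; summing over all of $RO(C_9) \isom \Z\{1,\lambda_1,\lambda_2,\lambda_3,\lambda_4\}$ adjoins the four invertible classes $u_{\lambda_1},\dots,u_{\lambda_4}$. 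So the proof is essentially: take the formula for the integer-graded $E_2$ term, which by Theorem \ref{thm:main-completion} (together with the identification $\hat{H}^*(C_9; (\Delta^{-1}M)^\hhat_I) \isom \hat{H}^*(C_9; \pi_* E_6)$ via Conjecture \ref{conj:action}/Corollary \ref{cor:conjecture}) is
$$ \til{S}_\k^*\{ 1,\bar{c}, \delta, \bar{c}\delta, f_0,\dots,f_5,F_0,\dots,F_5 \}\big/ \big(3s_1,3s_2, 3\delta,3\bar{c}\delta,3f_i,3F_i \big), $$
and tensor with $\Z[u_{\lambda_1}^\pm,\dots,u_{\lambda_4}^\pm]$. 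The degree bookkeeping for the $u_{\lambda_i}$ is read directly off the preceding lemma: $u_{\lambda_i} \in H^0(C_9; \pi_2(S^{\lambda_i}\tensor E_6^h))$ sits in homological degree $0$ and representation degree $2 - \lambda_i$, which is exactly what is asserted.

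The steps, in order, would be: (1) invoke the preceding lemma with $G = C_9$, noting $RO(C_9) \isom \Z\{1,\lambda_1,\dots,\lambda_4\}$ and that $(9-1)/2 = 4$, to produce the $RO(C_9)$-graded hfpss and the identification $E_2^{*,\star} \isom H^*(C_9;\pi_*E_6^h)[u_{\lambda_1}^\pm,\dots,u_{\lambda_4}^\pm]$; (2) observe that since each $u_{\lambda_i}$ is a unit, passing from $H^*$ to $\hat{H}^*$ (i.e. inverting the $C_9$-periodicity class $b$, equivalently restricting attention to the Tate-cohomology hfpss as in Proposition \ref{lem:serre-tate}) commutes with adjoining the $u_{\lambda_i}$, so the Tate version of the $E_2$ term is $\hat{H}^*(C_9; \pi_* E_6)[u_{\lambda_1}^\pm,\dots,u_{\lambda_4}^\pm]$; (3) substitute the explicit formula for $\hat{H}^*(C_9; (\Delta^{-1}M)^\hhat_I)$ from Theorem \ref{thm:main-completion}; (4) record the degrees of the $u_{\lambda_i}$ from the generator description in the preceding lemma. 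Because the $u_{\lambda_i}$ are central units, no new relations are introduced and none of the existing relations change, so the module presentation is simply base-changed along $\til{S}_\k^* \hookrightarrow \til{S}_\k^*[u_{\lambda_1}^\pm,\dots,u_{\lambda_4}^\pm]$.

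I do not expect a serious obstacle here; this corollary is a formal consequence of the two preceding results, and the only thing to be careful about is consistency of grading conventions — in particular that the $(n,V)$ grading used in the corollary matches the $(n, t)$-to-$(n, 2-\lambda_i)$ translation coming from $H_{|\lambda_i|}(S^{\lambda_i}) \isom \Z$ concentrated in topological degree $2$, and that "homological degree $n$, internal degree $t$" in Theorem \ref{thm:main-completion} lines up with "$(n, V)$" after setting $V$ to the appropriate integer multiple of the trivial representation. The mildest subtlety is that the preceding lemma is phrased for $E^h = F(EC_k{}_+, E)$ and the hfpss, whereas Theorem \ref{thm:main-completion} is phrased in terms of $\hat{H}^*$; but the localized (Tate) hfpss and the identification of its $E_2$ page with $\hat{H}^*(C_9;(\Delta^{-1}M)^\hhat_I)$ is exactly the content of \eqref{eq:serre-sseq} and Theorem \ref{thm:main-completion}, so this is already available. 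If one wanted to be thorough one would also note that adjoining the $u_{\lambda_i}$ is compatible with the $I$-completion and $\Delta$-localization, which is immediate since the $u_{\lambda_i}$ carry trivial internal ($t$-)degree and hence are not affected by either operation.
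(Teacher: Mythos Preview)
Your proposal is correct and matches the paper's approach: the paper states this corollary with no proof at all, treating it as an immediate consequence of combining the preceding lemma (which gives $E_2^{*,\star} \isom H^*(C_9;\pi_*E^h)[u_{\lambda_1}^\pm,\dots,u_{\lambda_4}^\pm]$) with Theorem \ref{thm:main-completion}. Your write-up is in fact more careful than the paper's treatment, correctly flagging the passage from $H^*$ to $\hat H^*$ and the compatibility with completion and localization.
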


\section{Detection theorem}\label{sec:detection}

In this section we prove the $E_6$ detection theorem (Conjecture
\ref{conj:HHR-odd}(1)) using the work in \cite{HHR-odd}, which provides $p=3$
analogues of the $p=2$ arguments in \cite[\S11]{HHR-v2}.

Let $A = \Z_3[\zeta]$ where $\zeta$ is a primitive $9^{th}$ root of unity, and write $\pi = 1-\zeta$ for the uniformizer. Hill, Hopkins, and Ravenel \cite[\S4]{HHR-odd} construct a formal $A$-module
$F$ over $R_* := A[w^\pm]$ with logarithm
\begin{equation}\label{eq:log} \log_F(x) = x + \sum_{k>0}{w^{3^k-1}x^{3^k}\over \pi^k}. \end{equation}
The grading on $R_*$ is defined by $|w| = 4$. 

\begin{lemma}\label{lem:height}
$F$ as a formal group law over $R_*$ has height $\leq 6$.
\end{lemma}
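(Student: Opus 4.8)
The plan is to show that the formal $A$-module structure forces a $p$-typical formal group law whose $p$-series has leading term in degree at most $v_6$, so that height $\leq 6$ follows. The key input is the explicit logarithm \eqref{eq:log}: since $F$ is a formal $A$-module over $R_* = A[w^\pm]$, the action of $\zeta \in A$ gives $[\zeta]_F(x) = \log_F^{-1}(\zeta \log_F(x))$, and composing with itself we get that $[\zeta^3]_F$ is a primitive cube root of unity times $x$ after passing to logarithms. The prime $p = 3$ factors in $A$ as $(p) = (\pi)^6$ up to units (since $[\Q_3(\zeta):\Q_3] = \varphi(9) = 6$ and the extension is totally ramified), so $[p]_F(x) = [\pi^6 \cdot u]_F(x)$ for a unit $u$, and it suffices to compute the $\pi$-series $[\pi]_F$.

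First I would compute $[\pi]_F(x) = \log_F^{-1}(\pi \log_F(x))$ directly from \eqref{eq:log}. Writing $\log_F(x) = \sum_{k \geq 0} \ell_k x^{3^k}$ with $\ell_0 = 1$ and $\ell_k = w^{3^k-1}/\pi^k$, we have $\pi \log_F(x) = \sum_k \pi \ell_k x^{3^k} = \pi x + \sum_{k \geq 1} w^{3^k-1}\pi^{1-k} x^{3^k}$. Applying $\log_F^{-1}$ (which has the form $x + (\text{higher})$) and reducing mod $\pi$, the linear term $\pi x$ vanishes, and the first surviving term comes from the $k=1$ summand $w^2 x^3$ (which has $\pi$-valuation $1 - 1 = 0$), while the $k \geq 2$ terms $w^{3^k - 1}\pi^{1-k}x^{3^k}$ have negative $\pi$-valuation and must be cancelled by cross-terms from $\log_F^{-1}$ applied to lower terms — this is the standard phenomenon that makes the formal group $p$-typical with good integral structure. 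The upshot I expect is that $[\pi]_F(x) \equiv w^2 x^3 \pmod{\pi, \text{higher powers of } x}$ after the dust settles, i.e. the mod-$\pi$ reduction of $F$ is a formal group law whose $\pi$-series (hence, iterating six times, whose $p$-series) begins in degree $3^6 = 729$, so the height is exactly $6$ — but the lemma only claims $\leq 6$, so I only need the leading term of $[p]_F \bmod \pi$ to have $x$-exponent $\leq 3^6$.

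The cleanest route, which I would actually take, avoids computing the full Honda-type formula: since $[p]_F = [\pi]_F^{\circ 6}$ up to a unit, and mod $\pi$ the $\pi$-series $[\pi]_F(x)$ has the form $\zeta$-times-linear $\equiv 0$ in the linear coefficient with first nonzero term of $x$-degree $3^d$ for some $d \geq 1$ (and from \eqref{eq:log}, $d = 1$: the $w^{3-1}/\pi^1$ term is the unique one contributing in $\pi$-valuation $0$ with minimal exponent), composing six copies gives first nonzero term of degree $3^{6d} \geq 3^6$ with equality when $d=1$. Hence $[p]_F(x) \bmod (3, \text{reduction})$ has leading exponent $3^6$, meaning $v_i = 0$ for $i < 6$ and $v_6$ is a unit multiple of a power of $w$ — so the height is $\leq 6$ (in fact $=6$).

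\textbf{The main obstacle} will be justifying that the reduction mod $\pi$ is well-defined and that the higher-order cross-terms in $\log_F^{-1}(\pi \log_F(x))$ do not produce surviving terms of $x$-degree strictly less than $3^6$ in $[p]_F \bmod \pi$ — i.e. controlling the $\pi$-adic valuations of all the coefficients of $[\pi]_F$, not just the leading one. This is exactly the kind of bookkeeping done in \cite[\S4]{HHR-odd}; I would cite their lemma that $F$ (being a formal $A$-module with this specific logarithm, the ``universal'' one in the sense of Hazewinkel/Honda for $A$-modules) has its reduction integral over $R_*/\pi$ with $[\pi]$-series congruent to $w^2 x^3$ modulo $\pi$ and modulo decomposables, and conclude. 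If that statement is not explicitly available to cite, the fallback is the direct induction on coefficients: assume all coefficients of $[\pi]_F$ of $x$-degree $< 3^k$ are $\pi$-integral (in fact the first is a unit, the rest divisible by $\pi$), and check the degree-$3^k$ coefficient using the recursion $\log_F([\pi]_F(x)) = \pi \log_F(x)$; the potentially-bad term $w^{3^k-1}\pi^{1-k}x^{3^k}$ on the right is matched by $\ell_k \cdot x^{3^k}$-contributions plus $\pi$-multiples of products of earlier $[\pi]_F$-coefficients, and the valuations align precisely because $v_\pi(\ell_k) = -k$.
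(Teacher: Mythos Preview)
Your approach is correct and genuinely different from the paper's. The paper sets $w=1$, writes $\ell_k=\pi^{-k}$, and uses the Hazewinkel recursion $p\ell_n=\sum_{0\le i<n}\ell_i v_{n-i}^{p^i}$ to solve explicitly for $v_1,\dots,v_6$ as polynomials in $\zeta$ modulo $3$; then it checks by inspection that $v_1,\dots,v_5\equiv 0\pmod{1-\zeta}$ while $v_6$ is a unit. Your route instead exploits the formal $A$-module structure: since $3=u\pi^6$ with $u\in A^\times$ and $[\pi]_F(x)\equiv w^2x^3\pmod{\pi,x^4}$, the six-fold iterate $[\pi]_F^{\circ 6}$ has leading term of $x$-degree exactly $3^6$ mod $\pi$, and composing with the automorphism $[u]_F$ does not change this. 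This is more conceptual and works uniformly for any Lubin--Tate--type logarithm over a totally ramified extension, whereas the paper's argument is a direct self-contained computation that requires no appeal to the $A$-module endomorphisms.

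One comment: the obstacle you flag is less severe than you make it sound. Once you grant that $F$ is a formal $A$-module over $R_*$ (which is how it is constructed in \cite[\S4]{HHR-odd}), the series $[\pi]_F$ lies in $R_*[[x]]$ by definition, so reduction mod $\pi$ is automatic and there is no integrality bookkeeping to do. Comparing $x^3$-coefficients in $\log_F([\pi]_F(x))=\pi\log_F(x)$ gives $c_3+w^2\pi^{-1}\pi^3=w^2$, hence $c_3=w^2(1-\pi^2)\equiv w^2\pmod\pi$; together with $c_1=\pi$ and $c_2=0$ this pins down the leading term, and then composition of power series with $x$-adic valuation $3$ forces the leading term of $[\pi]_F^{\circ 6}$ to have $x$-degree exactly $3^6$ with no possibility of lower-degree contamination. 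So your fallback induction is not needed.
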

\begin{proof}
To calculate the height of the graded formal group law $F$, it suffices to work
with the ungraded version; i.e., we can set $w=1$. Moreover, to prove the
desired inequality it suffices to prove that it has height equal to 6 after quotienting $R_*$ by its
maximal ideal $(\pi)$, which contains 3.

Let $\ell_k = \pi^{-k}$ be the coefficient of $x^{3^k}$ in the logarithm \eqref{eq:log}. By \cite[A2.2.1]{green} we have
$$ p \ell_n = \sum_{0\leq i< n} \ell_i v_{n-i}^{p^i} $$
where $v_n$ are the Hazewinkel generators. We can use this to solve for $v_i$
with $1\leq i\leq 6$. With coefficients taken modulo 3, we have
\begin{align*}
v_1 & = 2\zeta^5 + 2\zeta^4 + 2\zeta^3 + \zeta^2 + \zeta + 1
\\v_2 & = 2\zeta^4 + \zeta^3 + \zeta + 2
\\v_3 & = 2\zeta^3 + 1
\\v_4 & = \zeta^5 + \zeta^4 + \zeta^3 + \zeta^2 + \zeta + 1
\\v_5 & = \zeta^4 + 2\zeta^3 + \zeta + 2
\\v_6 & = 2\zeta^5 + \zeta^4 + \zeta^3 + 2\zeta^2 + 2.
\end{align*}
%sage:
%K.<z> = CyclotomicField(9)
%v = [p]
%for n in range(1,7):
%    v.append(p/pi^n - sum([1/pi^i*v[n-i]^(p^i) for i in range(1, n)]))
%for f in v[1:]:
%    print(f.polynomial().change_ring(GF(3)))
To check the image modulo the maximal ideal, set $\pi = 1 - \zeta= 0$; we
observe that $v_i \equiv 0$ modulo $1-\zeta$ for $i\leq 5$ and $v_6$ is a unit
modulo $1-\zeta$.
\end{proof}

\begin{proposition}\label{prop:detection}
Let $\Phi:\Ext_{BP_*BP}^{*,*}(BP_*, BP_*)\to \Ext_A^{*,*}(\F_p,\F_p)$ denote the Thom
reduction map.
Every $x\in \Ext_{BP_*BP}^{2,4\cdot 3^{j+1}}(BP_*, BP_*)$ such that $\Phi(x) =
b_j$ has nontrivial image in $H^2(C_9; \pi_{4\cdot 3^{j+1}}E_6)$.
\end{proposition}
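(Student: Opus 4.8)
The plan is to follow the strategy of Hill--Hopkins--Ravenel \cite{HHR-odd}, adapting the $MU_{\mu_3}$ detection argument to the spectrum $E_6$ via the formal $A$-module $F$ over $R_* = A[w^\pm]$ with the logarithm \eqref{eq:log}. The key point is that $F$ furnishes a comparison between the Adams--Novikov $E_2$ page and the group cohomology $H^*(C_9; \bar R_*)$ for a ring $\bar R_*$ that is much simpler than $\pi_* E_6$. First I would record that, since $\zeta$ is a primitive $9^{th}$ root of unity, the cyclic group $C_9 = \langle \zeta \rangle \subseteq A^\times$ acts on $R_*$ (hence on $\bar R_* := R_*/(\pi)$ or an appropriate localization/completion thereof) by acting on $w$ through the character determined by the $A$-module structure of $F$; this is the ``simplest possible group cohomology'' $H^*(C_9; \bar R_*)$ referred to in the introduction. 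By Lemma \ref{lem:height}, $F$ has height $\leq 6$, and in fact (from the computation that $v_6$ is a unit mod $(1-\zeta)$ while $v_i \equiv 0$ for $i \leq 5$) it has height exactly $6$ mod the maximal ideal, so by Lubin--Tate theory there is a map classifying $F$ from $\pi_* E_6$ to (a completion of) $\bar R_*$, equivariant for the $C_9$-actions, inducing
$$ H^2(C_9; \pi_{4\cdot 3^{j+1}} E_6) \longrightarrow H^2(C_9; (\bar R_*)_{4\cdot 3^{j+1}}). $$
It therefore suffices to show that the composite $\Ext_{BP_*BP}^{2, 4\cdot 3^{j+1}}(BP_*,BP_*) \to H^2(C_9; \bar R_*)$ sends any $x$ with $\Phi(x) = b_j$ to a nonzero class.

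Next I would identify the image of $b_j$ (or rather of a lift $x$) under the composite through $BP_* \to R_*$ classifying $F$. Since the logarithm \eqref{eq:log} has the shape $\log_F(x) = \sum_k w^{3^k-1} x^{3^k}/\pi^k$, the classifying map $BP_* \to R_*$ sends the Hazewinkel generator $v_k$ to the explicit element computed in the proof of Lemma \ref{lem:height} (times the appropriate power of $w$ to fix the grading), and in particular sends $v_1$ to something divisible by $\pi$ but not $\pi^2$ up to a unit. The class $b_j = -\langle h_j, \dots, h_j \rangle$ in the Adams spectral sequence lifts to a class in the Adams--Novikov $E_2$ detected, modulo the appropriate filtration, by $v_1^{?}$-type information; more precisely one uses the standard fact (as in \cite{HHR-odd}, following \cite[\S11]{HHR-v2}) that in $H^*(C_9; \bar R_*)$ the relevant class is the image of a power of the Bockstein-type class $\beta \in H^2$ coming from the short exact sequence $0 \to \bar R_* \overset{\pi}\to \bar R_* \to \bar R_*/\pi \to 0$, paired with an invertible power of $w$. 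The heart of the matter is a \emph{nonvanishing} computation: one must show that the corresponding element of $H^2(C_9; \bar R_*)$ in internal degree $4\cdot 3^{j+1}$ is nonzero. This reduces to a direct computation in the Tate or ordinary cohomology of $C_9$ acting on the graded ring $\bar R_*$, using that $\bar R_* = \F_9[w^\pm]$ (or similar) with $C_9$ acting on $w$ by a faithful character, so that $H^*(C_9; \bar R_*)$ is periodic and the class in question is a unit multiple of $b^{3^{j+1}/\text{something}} \cdot w^{3^{j+1}}$; its nonvanishing follows because the relevant power of the periodicity generator is never killed in the periodic cohomology of the cyclic group.

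The main obstacle I anticipate is the second step: matching up the algebraic description of $b_j$ in the Adams--Novikov spectral sequence (via the Thom reduction $\Phi$ and the chromatic/algebraic-$v_1$-Bockstein machinery) with an \emph{explicit} class in $H^2(C_9; \bar R_*)$, and then verifying that this class survives — i.e., is genuinely nonzero in the cohomology of $C_9$ and not merely nonzero in some associated graded. In \cite{HHR-odd} this is handled for $MU_{\mu_3}$ by an essentially formal argument once the formal $A$-module is in hand; the content of Proposition \ref{prop:detection} is the observation that the Lubin--Tate classifying map $\pi_* E_6 \to \bar R_*$ (guaranteed by Lemma \ref{lem:height}) is $C_9$-equivariant and the target computation is insensitive to the difference between $E_6$ and $MU_{\mu_3}$, so the same argument goes through. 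I would therefore structure the proof as: (1) construct the $C_9$-equivariant map $\pi_* E_6 \to \bar R_*$ from $F$; (2) cite \cite{HHR-odd} for the identification of the image of $b_j$ in $H^2(C_9; \bar R_*)$ and its nonvanishing there; (3) conclude by naturality. The only genuinely new input beyond \cite{HHR-odd} is step (1), which rests entirely on Lemma \ref{lem:height}.
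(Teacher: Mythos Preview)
Your overall strategy matches the paper's: factor the comparison through $H^*(C_9;\bar R_*)$ using the formal $A$-module $F$, with the $C_9$-equivariant map $\pi_*E_6\to R_*$ supplied by Lemma~\ref{lem:height}, and then do the detection computation in $\bar R_*$ rather than in $\pi_*E_6$. Two small corrections: the paper takes $\bar R_*=R_*/3$ (not $R_*/(\pi)$), and the $C_9$-action $\gamma(w)=\zeta w$ is unipotent over $A/3$, not given by a faithful semisimple character; your description ``$\bar R_*=\F_9[w^\pm]$ with $C_9$ acting by a faithful character'' is not what is happening.

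The substantive gap is that you treat the problem as showing that ``the image of $b_j$'' (i.e.\ of one chosen lift) is nonzero in $H^2(C_9;\bar R_*)$. But the proposition concerns \emph{every} $x$ with $\Phi(x)=b_j$, and $\Phi$ has a large kernel in this bidegree. The paper's argument has two distinct halves which your sketch does not separate. First, one fixes a basis $\{\beta_{3^j/3^j}\}\cup\mathcal B_j$ of $\Ext^{2,4\cdot3^{j+1}}_{BP_*BP}$ (from \cite{HHR-odd}), so that any such $x$ is $\pm\beta_{3^j/3^j}+y$ with $y\in\Span(\mathcal B_j)$. One then shows $\lambda(\beta_{3^j/3^j})\neq 0$ by an explicit cobar computation: $\tilde\beta_{3^j/3^j}\equiv t_1^{3^{j+1}}\pmod{(3,v_1^6)}$, $\lambda(v_1^6)=0$ in $R_*/3$, $\lambda(t_1)$ is a unit, and the Bockstein $H^1(C_9;R_*/3)\to H^2(C_9;R_*)$ is injective in this degree since $H^1(C_9;R_{4\cdot3^{j+1}})=0$. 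Second, and this is what your proposal omits entirely, one shows $\lambda(\beta)=0$ for every $\beta\in\mathcal B_j$ via a \emph{valuation} argument: set $\|\pi\|=1/6$, $\|w\|=0$, check that $\lambda$ does not decrease valuation, and use that $\|\beta\|>2$ for $\beta\in\mathcal B_j$ (computed in \cite{HHR-odd}). Without this vanishing half the argument cannot conclude anything about a general $x$: showing one lift has nonzero image is insufficient, since adding an element of $\ker\Phi$ with nonzero image could cancel it.
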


%Here we mean Height in the graded sense; the height of $R_*$ is defined to be the height over the quotient of $R_*$ by its maximal ideal, $\overline{R}_* := R_*/(\pi, w-1)$. 

%\RinNote{reference https://ir.cwi.nl/pub/2517/2517D.pdf}
%$v_i$ are not units in $\Z_3[\zeta]^\times.$ 

%A formula of Hazewinkel gives us 6 is that the first $k$ such that  is a unit, which implies  

\begin{proof}
Let $\bar{R}_* = R_*/3$.
It suffices to construct a composite
$$ \lambda: \Ext_{BP_*BP}^{*,*}(BP_*, BP_*)\to H^*(C_9; \pi_*E_6)\to H^*(C_9;
\bar{R}_*)$$
such that the image of $x$ is nonzero for every $x$ in the statement.
The first map is the map of Adams-Novikov $E_2$ terms induced by $BP_*\to \pi_*E_6$. To construct the second map, first note that the classifying map $BP_* \to R_*$ of the formal group law $F$ factors through $\pi_*E_6$ since it has height $\leq 6$ by Lemma \ref{lem:height}.
As a formal $A$-module, $F$ has an endomorphism $[\zeta]$ of order 9, with respect to which
the classifying map $[F]:\pi_*E_6 \to R_*$ is $C_9$-equivariant.
Thus, we obtain a map $\lambda':H^*(C_9; \pi_*E_6)\to H^*(C_9; R_*)$. 
We construct the second map in the composite $\lambda$ by composing $\lambda'$ with the quotient
$R_* \to R_*/3 =:\bar{R}_*$.
The
$C_9$-action on $R_*$ is given by $\gamma(w)=\zeta w$ for $C_9 = \an{\gamma}$, so the
action on $R_{4m}$ is multiplication by $\zeta^m$.

The 2-line $\Ext^{2,*}_{BP_*BP}(BP_*, BP_*)$ is additively generated by elements
$\beta_{i/j,k}$.
In particular, by \cite[\S4]{HHR-odd} the
Adams--Novikov $E_2$ term in the degree of $\beta_{3^j/3^j}$ is additively
generated by the set $\mathcal{B}_j \union \{ \beta_{3^j/3^j} \}$ where
$$ \mathcal{B}_j = \{ \beta_{c(j,k)/3^{j-2k}} \st 0 < k \leq j/2 \} $$
where $c(j,k) = (3^{j+1}+3^{j-2k})/4$. Since $\Phi(\beta_{3^j/3^j}) = -b_j$
\cite[Theorem 9.4]{MRW}, every $x$ in the statement can be written as a sum $\pm
\beta_{3^j/3^j}+y$ for $y\in \Span(\mathcal{B}_j)$.
Thus it suffices
to show that $\lambda(\beta_{3^j/3^j})\neq 0$ and $\lambda(\beta)=0$ for $\beta\in
\mathcal{B}$.

For the first part, we follow the proof of \cite[Lemma 11.6]{HHR-v2}.
By \cite{MRW} we have 
$$ \beta_{3^j/3^j} = \delta(\til{\beta}_{3^j/3^j})\text{ for }
\til{\beta}_{3^j/3^j} = \delta_{v_1^{3^j}}(v_2^{3^j} - v_1^{3^{j-2}\cdot
8}v_2^{3^{j-2}\cdot 7}) $$
where $\delta_{v_1^{3^j}}$ is the boundary map $\Ext^0(BP_*/(3,v_1^{3^j})) \to
\Ext^1(BP_*/3)$ and $\delta$ is the boundary map $\Ext^1(BP_*/3)\to \Ext^2(BP_*)$.
We calculate $\til{\beta}_{3^j/3^j}$ explicitly by computing the cobar
differential $d_1 = \eta_R - \eta_L = \eta_R - \mathrm{Id}$ where $\eta_R(v_1) \equiv v_1 \pmod 3$ and $\eta_R(v_2)\equiv v_2+v_1t_1^3-v_1^3t_1\pmod 3$:
\begin{align*}
d_1(v_2^{3^j} - v_1^{3^{j-2}\cdot 8}v_2^{3^{j-2}\cdot 7}) &\equiv
v_1^{3^j}t_1^{3^{j+1}} \pmod {(3,v_1^{3^j+6})}.
\\\til{\beta}_{3^j/3^j} = {1\over v_1^{3^j}}d_1(v_2^{3^j}-v_1^{3^{j-2}\cdot 8}v_2^{3^{j-2}\cdot 7}) &
\equiv t_1^{3^{j+1}} \pmod {(3, v_1^6)}
\end{align*}
for $j\geq 2$.

We have a diagram of long exact sequences
$$ \xymatrix{
\Ext^{1,4\cdot 3^{j+1}}(BP_*)\ar[r]\ar[d]^-\lambda & \Ext^{1,4\cdot
3^{j+1}}(BP_*/3)\ar[r]^-\delta\ar[d]^-\lambda & \Ext^{2,4\cdot 3^{j+1}}(BP_*)\ar[d]^-\lambda
\\H^1(C_9; R_{4\cdot 3^{j+1}})\ar[r] & H^1(C_9; R_{4\cdot 3^{j+1}}/3)\ar[r]^-{\bar{\delta}} &
H^2(C_9; R_{4\cdot 3^{j+1}})
}$$
where 
$$ H^1(C_9; R_{4\cdot 3^{j+1}})=\ker(1+\zeta^{3^{j+1}}+\dots + \zeta^{8\cdot
3^{j+1}})/\im(1-\zeta^{3^{j+1}}) = \ker(8)/0 = 0$$
for $j \geq 2$ (using the minimal resolution in Lemma \ref{lem:margolis}).
Thus $\bar{\delta}$ is an injection.
Analogously to \cite[(11.5)]{HHR-v2}, we have that $\lambda(t_1)$ is a unit.
Recall that $\lambda(v_1^6)=0$ in $R_*/3$, and so
$\lambda(\til{\beta}_{3^j/3^j}) = \lambda(t_1^{3^{j+1}})$ is a unit. Since
$\bar{\delta}$ is an injection and the square commutes, $\lambda(\delta(\til{\beta}_{3^j/3^j})) \neq 0$.

It remains to show that $\lambda(\beta)=0$ for $\beta\in \mathcal{B}_j$.
Hill, Hopkins, and Ravenel \cite{HHR-odd} construct a valuation $\|-\|$ on
$BP_*BP$ such that $\|v_n\| = \max(0, (6-n)/6)$. They compute $\|\beta\| > 2$ for $\beta \in \mathcal{B}_j$.
Following \cite[after Lemma 11.8]{HHR-v2}, define a valuation on $A[w^\pm]$ by
setting $\|\pi\| = 1/6$ and $\|w\|=0$. Since $\pi^6 = 3u$ for a unit $u$, this
extends the valuation where $\|3\| = 1$. Then analogous arguments as the $p=2$ case show that
$\lambda$ (non-strictly) increases valuation, and so
$\lambda(\beta)=0$ for $\beta\in\mathcal{B}_j$.
\end{proof}

%Consider $BP_* \simeq \Z_{(p)}[[v_1, v_2, \cdots]],$ such that the universal $p$-typical $p$-series is of the form $$[p]_F(x) = x +_F v_1 x^p +_F v_2x^{p^2} +_F \cdots.$$

%Consider the coefficients $m_k := \pi^{-k}$ of the ungraded logarithm $\log_F(x)$  over $R := R_*/(1-w)$, as the grading does not affect the height. By Hazewinkel (11.3 \& 11.4): $$3m_k = v_1^{p^{k-1}}m_{k-1} + v_2^{p^{k-2}}m_{k-2} + \cdots + v_{k-1}m_1^p + v_k.$$ 

\setcounter{section}{0}
\renewcommand\thesection{\Alph{section}}
\section{Appendix: Supplemental details about the structure of $\hat{H}^*(C_9;
\Sym(\Ind_{C_3}^{C_9}\bar{\rho}))$}
Let $M = \Sym(\Ind_{C_3}^{C_9}\bar{\rho})$.
Lemma \ref{lem:fbar} gives additive generators $1,\bar{c}:=c_1c_2c_3,\delta, \bar{c}\delta = c_1c_2c_3\delta,f_i$, and
$F_i$ (for $0\leq i\leq 5$) for 
$$ \hat{H}^*(C_3; M)^{C_9/C_3} := (\k[d_1,d_2,d_3]\tensor \Lambda_\k[c_1,c_2,c_3])^{C_3} $$
as a module over  
$$S = \k[d_1,d_2,d_3]^{S_3} \isom \k[s_1,s_2,s_3]. $$
Next we give the proof of Proposition \ref{prop:c1-d1}, which completes the
determination of the $S$-module structure on $\hat{H}^*(C_3; M)^{C_3}$ by
finding all $S$-module relations $\sum_i t \cdot \phi$ where $t\in \{
1,\delta,c_1c_2c_3,f_i,F_i \}_{0\leq i\leq 5}$ and $\phi\in S$. The grading
$|c_i| = 1, |d_i| = 0$ on $\hat{H}^*(C_3; M)$, which corresponds to the $q$-grading in
$\SE^{p,q}$, induces a grading on $\hat{H}^*(C_3;M)^{C_3}$ in which
$|\delta| = 0, |f_i| = 1, |F_i| = 2, |\bar{c}| = 3$, and $S$ is in degree 0.

\begin{proof}[Proof of Proposition \ref{prop:c1-d1}]
It is clear from Lemma \ref{lem:c-gens} that in degree 0 and 3 of the $q$-grading,
the $C_3$-fixed points of $\hat{H}^*(C_3; M)$ are free over $S$ on the
generators $\{ 1,\delta \}$ and $\{\bar{c},\bar{c}\delta\}$, respectively.

For degree 1, we need to find relations of the form
\begin{equation}\label{eq:cX-rel} \sum_i f_{n_i}\cdot \phi_i(s_1,s_2,s_3) \end{equation}
where $\phi_i$ is a polynomial in $s_1,s_2,s_3$. The strategy is to turn
this problem about module generators into a problem about ring generators for a
related commutative ring: every relation \eqref{eq:cX-rel} is also true in 
$\k[c_1,c_2,c_3,d_1,d_2,d_3]/(c_i^2)$, as graded commutativity does not affect
such relations. We will use {\tt sage} \cite{sage} to find
\emph{multiplicative} relations in this commutative ring, and then restrict to
relations of $c$-degree 1. Since $\k$ has trivial $C_3$ action, everything is
tensored up over $\F_3$, so we work here over $\F_3$.

\begin{lstlisting}[basicstyle=\small\selectfont\ttfamily,columns=fullflexible]
sage: import sage.libs.singular
....: from sage.rings.polynomial.multi_polynomial_sequence import *
....: _ = var('c1,c2,c3,d1,d2,d3')
....: R.<c1,c2,c3,d1,d2,d3> = PolynomialRing(GF(3))
....: f0 = c1 + c2 + c3
....: f1 = c1*d1 + c2*d2 + c3*d3
....: f2 = c1*d2 + c2*d3 + c3*d1
....: f3 = c1*d1*d2 + c2*d2*d3 + c3*d3*d1
....: f4 = c1*d2*d3 + c2*d3*d1 + c3*d1*d2
....: f5 = c1*d1^2*d2 + c2*d2^2*d3 + c3*d3^2*d1
....: s1 = d1 + d2 + d3
....: s2 = d1*d2 + d2*d3 + d1*d3
....: s3 = d1*d2*d3
....: 
....: S = Sequence([f0, f1, f2, f3, f4, f5, s1, s2, s3])
....: # Find the ideal of algebraic relations among f0, ..., s3
....: I = S.algebraic_dependence()
....: # Find a generating set for the ideal of relations
....: I.groebner_basis()
\end{lstlisting}
The output (list of generating relations) is as follows,
where one should read $(T0,\dots,T8)$ as variables representing
$(f_0,\dots,f_5,s_1,s_2,s_3)$.
\begin{lstlisting}[basicstyle=\small\selectfont\ttfamily,columns=fullflexible]
[T1*T5 - T2*T5 + T3^2 + T3*T4 + T4^2 - T0*T1*T8 + T0*T2*T8 - T0*T4*T7 + T1*T2*T7 - T1*T3*T6 + T0^2*T6*T8,
 T3*T5 - T4*T5 - T0*T3*T8 + T0*T4*T8 - T1^2*T8 - T1*T2*T8 - T2^2*T8 + T2*T3*T7 - T3^2*T6 - T0^2*T7*T8 + T0*T1*T6*T8,
 T5^2 + T0*T5*T8 + T1*T3*T8 - T1*T4*T8 - T2*T3*T8 + T2*T4*T8 + T2*T5*T7 + T3*T4*T7 + T3*T5*T6 + T0^2*T8^2 + T0*T2*T7*T8 + T0*T3*T6*T8 + T2^2*T6*T8 - T2*T3*T6*T7 + T3^2*T6^2,
 T0*T3*T7 - T0*T4*T7 + T0*T5*T6 + T1*T2*T7 - T2^2*T7 - T2*T3*T6 + T2*T4*T6 - T0^2*T6*T8 - T0*T3*T6^2,
 T0*T5*T7 + T1*T4*T7 - T1*T5*T6 - T2*T4*T7 + T2*T5*T6 - T3^2*T6 + T3*T4*T6 - T0^2*T7*T8 + T0*T2*T7^2 - T0*T3*T6*T7 - T1*T2*T6*T7 + T1*T3*T6^2,
 T1^2*T7 + T1*T2*T7 - T1*T3*T6 + T1*T4*T6 + T2^2*T7 + T2*T3*T6 - T2*T4*T6 + T0^2*T7^2 - T0*T1*T6*T7 + T0*T3*T6^2,
 T1*T3*T7 - T1*T4*T7 + T1*T5*T6 - T2*T3*T7 + T2*T4*T7 - T2*T5*T6 - T0*T1*T6*T8 + T0*T2*T6*T8 - T0*T2*T7^2 + T1*T2*T6*T7 - T1*T3*T6^2,
 T2*T5*T7 - T3*T4*T7 + T4^2*T7 - T4*T5*T6 - T0*T2*T7*T8 + T0*T4*T6*T8 - T1*T2*T6*T8 + T2^2*T6*T8 + T2^2*T7^2 - T2*T3*T6*T7 - T2*T4*T6*T7 + T3*T4*T6^2 + T0*T2*T6^2*T8,
 T3^3 - T4^3 + T0*T4*T5*T6 + T1^3*T8 - T1^2*T5*T6 - T1*T2*T5*T6 - T1*T3^2*T6 - T1*T4^2*T6 - T2^3*T8 + T2^2*T3*T7 - T2^2*T4*T7 - T2^2*T5*T6 - T2*T4^2*T6 + T0^2*T1*T7*T8 - T0^2*T2*T7*T8 + T0^2*T3*T6*T8 + T0^2*T3*T7^2 + T0^2*T4*T6*T8 - T0^2*T4*T7^2 - T0*T1*T2*T6*T8 + T0*T1*T2*T7^2 - T0*T1*T3*T6*T7 + T0*T1*T4*T6*T7 + T0*T2^2*T6*T8 - T0*T2^2*T7^2 + T0*T3^2*T6^2 + T0*T3*T4*T6^2 - T1^2*T2*T6*T7 + T1^2*T3*T6^2 + T1*T2^2*T6*T7 - T1*T2*T3*T6^2,
 T3^2*T7 + T3*T4*T7 + T4^2*T7 + T1^2*T6*T8 + T1*T2*T6*T8 + T2^2*T6*T8 + T2^2*T7^2 - T2*T3*T6*T7 - T2*T4*T6*T7 + T3*T4*T6^2 + T0^2*T6*T7*T8 - T0*T1*T6^2*T8 + T0*T2*T6^2*T8]
\end{lstlisting}
The relations are homogeneous with respect to the
$q$-grading (i.e., $|d_i| = 0, |c_i| = 1$), and there are no relations in homogeneous degree 1. Any relation of the form \eqref{eq:cX-rel}
in the graded commutative ring would also be a relation in the ordinary
polynomial-exterior ring, and so $\hat{H}^*(C_3; M)$ restricted to $q$-degree 1 is free over $S$ on the generators $f_0,\dots,f_5$.

The statement for $c$-degree 2 is formally the same as the first,
where $c_1c_2, c_2c_3, c_3c_1$ can be thought of as indecomposable variables
playing the role of $c_1,c_2,c_3$.
\end{proof}

\begin{lemma}\label{lem:extra-relations}
We have the following additional multiplicative relations in $\hat{H}^*(C_9; \Sym(\Ind_{C_3}^{C_9}
\bar{\rho}))$.
\begin{align*}
\delta^2  &=  - s_2^3 - s_1^3 s_3 + s_1^2 s_2^2
\\f_0f_1 & = F_2 - F_1
\\f_0f_2 & = F_0 s_1 - F_1 + F_2
\\f_0f_3 & = F_4 - F_3
\\f_0f_4  & = F_0 s_2 - F_3 + F_4
\\f_0f_5 & = F_2s_2 - F_3 s_1
\\f_1f_2 & = F_0 s_2 - F_2 s_1
\\f_1 f_3 & = F_5 - F_0 s_3 - F_3 s_1
\\f_1 f_4 & = F_5 - F_0 s_3 + F_3 s_1 + F_1s_2 - F_2 s_2
\\f_1f_5 & = F_1 s_3 - F_2 s_3 - F_3 s_2
\\f_2 f_3  & = F_5 + F_2 s_2 - F_3 s_1 - F_0 s_3
\\f_2f_4  & = F_5 - F_0 s_3 + F_2 s_2 - F_3 s_1 - F_4 s_1
\\f_2 f_5  & = F_0 s_1s_3 + F_1s_3 + F_2 s_1s_2 - F_3 s_1^2 - F_4 s_2 + F_5 s_1
\\f_3 f_4 & = F_0 s_1s_3 - F_4 s_2
\\f_3 f_5  & =  F_3 s_3 - F_4 s_3 + F_2 s_1s_3
\\f_4f_5 & = F_0s_2s_3 - F_1s_1s_3+F_2s_2^2-F_2s_1s_3+F_3s_3-F_3s_1s_2-F_4s_3+F_5s_2
\\f_0\delta  & = -f_0 s_1s_2 - f_1s_2 + f_2s_2 + f_3s_1 - f_4s_1
\\ f_1\delta  & = f_0s_2^2 - f_0s_1s_3 + f_1s_1s_2 +  f_3 s_2 - f_4 s_2 + f_5 s_1
\\ f_2\delta  & = - f_0 s_1s_3 - f_2 s_1s_2 + f_3 s_2 - f_3 s_1^2 - f_4 s_2 + f_5 s_1
\\ f_3\delta  & = - f_0 s_2s_3 - f_1 s_1 s_3 + f_2 s_1s_3 + f_3 s_1s_2 + f_5 s_2
\\ f_4\delta  & = -f_0 s_2 s_3 + f_0 s_1^2s_3 - f_1 s_1 s_3 + f_2 s_2^2 + f_2 s_1s_3 - f_3 s_1 s_2 + f_4 s_1s_2 + f_5 s_2
\\ f_5\delta  & = f_0 s_1s_2 s_3 - f_1s_2 s_3 - f_1s_1^2s_3 + f_2 s_2 s_3 - f_3 s_2^2 + f_3 s_1 s_3 - f_4 s_1 s_3 - f_5 s_1s_2
\end{align*}
\end{lemma}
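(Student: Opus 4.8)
The plan is to verify each identity by a direct computation in the concrete graded-commutative ring
$$ T := \k[d_1,d_2,d_3]\tensor \Lambda_\k[c_1,c_2,c_3] $$
of Lemma \ref{lem:inner-coh} and then to read off the result in terms of the $S$-module generators $1,\bar c,\delta,\bar c\delta,f_0,\dots,f_5,F_0,\dots,F_5$ of Lemma \ref{lem:fbar}, where $S = \k[s_1,s_2,s_3]$. The first identity $\delta^2 = -s_2^3-s_1^3s_3+s_1^2s_2^2$ is just Lemma \ref{lem:alternating}. For the remaining identities, note that the shape of each relation is forced by the $c$-grading: $f_if_j$ lands in the $c$-degree-$2$ part of $T^{C_3}$, which is spanned over $S$ by $F_0,\dots,F_5$, while $f_i\delta$ lands in the $c$-degree-$1$ part, spanned over $S$ by $f_0,\dots,f_5$. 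Since everything is tensored up over $\F_3$ (because $\k$ has trivial $C_3$-action, as in the appendix), and since internal degree pins each unknown $S$-coefficient to a finite-dimensional space of monomials in $s_1,s_2,s_3$, what remains is a finite linear-algebra problem in each fixed internal degree.

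For the products $f_if_j$ I would use the orbit form $f_i = \tr(c_1 m_i) = c_1 m_i^{(1)}+c_2 m_i^{(2)}+c_3 m_i^{(3)}$ with $m_i^{(a)} = \gamma^{a-1}m_i\in \k[d_1,d_2,d_3]$; expanding and collecting cyclic pairs, using $c_ac_b=-c_bc_a$ and $c_a^2=0$, gives
$$ f_if_j = \tr\!\big(c_1c_2\,m_i^{(1)}m_j^{(2)}\big)-\tr\!\big(c_1c_2\,m_i^{(2)}m_j^{(1)}\big), $$
a $\k$-linear combination of transfers of $c_1c_2$-monomials. Each such transfer is then rewritten as an $S$-combination of $F_0,\dots,F_5$ by the reduction procedure in the proof of Lemma \ref{lem:c-gens}, together with the six explicit identities expressing $\tr(c_3c_1d_1^2d_2)$, $\tr(c_1c_2d_1d_2^2)$, etc.\ that appear in the proof of Lemma \ref{lem:fbar}. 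For the products $f_i\delta$, since $\delta\in\k[d_1,d_2,d_3]$ is $C_3$-invariant we have $f_i\delta = \tr(c_1 m_i\delta)$; one expands $m_i\delta$ as a polynomial in $d_1,d_2,d_3$ and applies the analogous $c$-degree-$1$ reduction (the base cases and inductive steps of Lemma \ref{lem:c-gens}, or equivalently the three $f$-identities in the proof of Lemma \ref{lem:fbar}) to land inside $S\{f_0,\dots,f_5\}$.

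In practice I would carry this out by machine, extending the \texttt{sage} computation of the appendix: in each fixed internal degree the relevant graded piece of $T^{C_3}$ is a finite-dimensional $\k$-vector space with the known monomial basis, and expressing $f_if_j$ or $f_i\delta$ in terms of the $S$-multiples of the chosen generators is a finite linear system (equivalently, a Gr\"obner normal form in $\F_3[c_1,c_2,c_3,d_1,d_2,d_3]/(c_i^2)$ modulo the ideal generated by $f_0,\dots,f_5,F_0,\dots,F_5,s_1,s_2,s_3$). Finally, these identities are computed a priori in $T^{C_3}$, which is the $b_1$-power-zero part of the Serre term $\SE_2^{0,*}$; to transfer them to $\hat H^*(C_9;\Sym(\Ind_{C_3}^{C_9}\bar\rho))$, observe that no Serre differential enters or leaves the column $p=0$ (Lemma \ref{lem:d3}), so the Serre-filtration-$0$ part of $\hat H^*(C_9;M)$ maps isomorphically onto $T^{C_3}\tensor\k[b_1^\pm]$, and each product above lands in this filtration-$0$ part. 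A positive-filtration correction term is ruled out for internal-degree reasons: by Proposition \ref{prop:serre-E2-summary} and Lemma \ref{lem:d3} every surviving positive-filtration class has internal degree divisible by $6$, whereas every product in the statement has internal degree $\equiv 2$ or $4 \pmod 6$ (by contrast $f_0F_0$ has internal degree $-6$, which is exactly why it can and does pick up the correction $f_0F_0 = 3\bar c$). The only real obstacle is the bookkeeping — keeping the signs from $c_ic_j = -c_jc_i$ straight and carrying out the monomial reductions correctly — which is precisely why a computer verification is the sensible route.
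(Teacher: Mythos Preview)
Your approach is essentially the paper's: verify each identity by direct machine computation in $T=\k[d_1,d_2,d_3]\otimes\Lambda_\k[c_1,c_2,c_3]$ over $\F_3$. The paper simply runs \texttt{sage} on the explicit generators and checks equality; your transfer formula $f_if_j=\tr(c_1c_2\,m_i^{(1)}m_j^{(2)})-\tr(c_1c_2\,m_i^{(2)}m_j^{(1)})$ is a correct and pleasant organizing principle, but in the end both routes amount to the same finite check.

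Your discussion of why a relation in $T^{C_3}$ lifts to $\hat H^*(C_9;M)$ is more careful than the paper's. The paper only remarks that all terms are $3$-torsion; you observe that the edge map $\hat H^*(C_9;M)\twoheadrightarrow E_\infty^{0,*}=T^{C_3}[b_1^\pm]$ has kernel $F^1$, and that $F^1$ vanishes in the relevant bidegree because every surviving positive-filtration class has internal degree divisible by $6$ while $|f_if_j|\equiv 2$ and $|f_i\delta|\equiv 4\pmod 6$. That argument is sound.

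One small gap: the relation $\delta^2=-s_2^3-s_1^3s_3+s_1^2s_2^2$ has internal degree $-36\equiv 0\pmod 6$, so your mod-$6$ argument does not exclude a correction by $3s_3^2\in F^1$, and ``just Lemma~\ref{lem:alternating}'' (which is over $\k$) is not quite enough. The clean fix is to bypass the Serre filtration here: $\delta,s_1,s_2,s_3$ are explicit elements of $M^{C_9}$, and over $\W$ the discriminant identity $\delta^2=s_1^2s_2^2-4s_2^3-4s_1^3s_3+18s_1s_2s_3-27s_3^2$ holds in $M$; passing to $\hat H^0(C_9;M)$ and using $3s_1=3s_2=0$ and $9=0$ reduces this to the stated relation.
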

\begin{proof}
These were all verified explicitly using {\tt sage}, version 10.6.
Even though $\hat{H}^*(C_9;
\Sym(\Ind^{C_9}_{C_3}\bar{\rho}))$ has ground ring $\W/9$, we may work over $\k$
(and hence over $\F_3$) here since all of the elements involved in these relations
are 3-torsion due to the relations in Theorem \ref{thm:main-completion}.

To check the relations, one can use the following setup. (We just show the first relation but the rest are similar.)
\begin{lstlisting}[basicstyle=\small\selectfont\ttfamily,columns=fullflexible]
sage: Gr.<c1,c2,c3,d1,d2,d3> = GradedCommutativeAlgebra(GF(3), degrees=(1,1,1,2,2,2))
....: def tr(f):
....:     return (f +
....:             f.substitute({c1:c2,c2:c3,c3:c1,d1:d2,d2:d3,d3:d1}) +
....:             f.substitute({c1:c3,c2:c1,c3:c2,d1:d3,d2:d1,d3:d2}))
....: f0 = tr(c1)
....: f1 = tr(c1*d1)
....: f2 = tr(c1*d2)
....: f3 = tr(c1*d1*d2)
....: f4 = tr(c1*d2*d3)
....: f5 = tr(c1*d1^2*d2)
....: F0 = tr(c1*c2)
....: F1 = tr(c1*c2*d1)
....: F2 = tr(c1*c2*d2)
....: F3 = tr(c1*c2*d1*d2)
....: F4 = tr(c1*c2*d2*d3)
....: F5 = tr(c1*c2*d1^2*d2)
....: s1 = tr(d1)
....: s2 = tr(d1*d2)
....: s3 = d1*d2*d3
....: delta = (d1-d2)*(d2-d3)*(d3-d1)
sage: delta^2 == -s2^3 - s1^3*s3 + s1^2*s2^2
True
\end{lstlisting}
\end{proof}

\bibliographystyle{alpha}
\bibliography{bib.bib}

\end{document}